\definecolor{tscolor}{rgb}{1.0,0.6,0.0}
\theoremstyle{plain}
\newtheorem{theorem}{Theorem}[section]
\newtheorem{lemma}[theorem]{Lemma}
\newtheorem{notation}[theorem]{Notation}
\newtheorem{definition}[theorem]{Definition}
\newtheorem{assumption}[theorem]{Assumption}
\theoremstyle{remark}
\newtheorem{remark}[theorem]{Remark}
\newtheorem{example}[theorem]{Example}
\numberwithin{equation}{section}
\newcommand{\rsto}{]\!\kern-1.8pt ]}
\newcommand{\lsto}{[\!\kern-1.7pt [}
\numberwithin{equation}{section}
\newcommand{\RR}{\mathbb{R}}
\newcommand{\cP}{\mathcal{P}}
\newcommand{\LL}{\mathbb{L}^{\infty}}
\renewcommand{\rho}{\varrho}
\newcommand\ep{\varepsilon}
\newcommand{\ind}{{\mathds 1}}
\newcommand{\newinf}{\mathop{\mathrm{inf}\vphantom{\mathrm{sup}}}}
\newcommand{\pp}[0]{\ensuremath \mathcal{P}}
\newcommand{\rrr}[0]{\ensuremath \mathcal{R}}
\newtheorem{mass}[theorem]{Assumption}
\definecolor{mypurple}{rgb}{0.5,0,98}
\definecolor{georggreen}{rgb}{0.02,0.7,0.2}
\begin{document}

\title[Robust large financial markets]{Quantitative Halmos-Savage Theorems and robust large financial markets}

\begin{abstract}

We establish a quantitative version of the classical Halmos-Savage Theorem for convex, potentially non-dominated sets of probability measures and its dual counterpart,  generalizing previous quantitative versions. 
These results are then used to derive robust versions of the fundamental theorem of asset pricing (FTAP) in large financial markets in a one-period setting, characterizing the absence of arbitrage under Knightian uncertainty. To this end, we consider robust formulations of no asymptotic arbitrage of first kind (NAA1), which is the large market analogue of ``No unbounded profit with bounded risk'' (NUPBR), as well as no asymptotic arbitrage of second kind (NAA2). Finally, we characterize asymptotic arbitrage of first and second kind in 
the robust one-period binomial model in terms of the model parameters.
\end{abstract}

\thanks{This research was funded in whole or in part by the Austrian Science Fund (FWF) Y 1235.\\
Support by the DFG with grant SCHM 2160-13 is gratefully acknowledged, too. Georg Köstenberger is part of Research Unit 5381 of the German Research Foundation and is supported by the Austrian Science Fund (FWF) I 5485-N}
\keywords{Halmos-Savage Theorem, NAA1 and NAA2, robust financial market, large financial markets, discrete time, asymptotic arbitrage.}
\subjclass[2000]{91G15, 60A10, 91B70, 91G80, 62G35}

\author[Cuchiero, Klein, K\"ostenberger, Schmidt]{Christa Cuchiero,  Irene Klein, Georg K\"ostenberger, Thorsten Schmidt}
\address{}
\maketitle

\section{Introduction}
Modeling financial markets as a dynamic probabilistic 
system is often approached via choosing an appropriate stochastic model. The choice of a single model -- be it by statistical estimation, machine learning methods, or calibration to market data -- is subtle and often erroneous. It has therefore become a core topic in modern approaches to financial markets to replace a single model $P$ by a suitable class of models $\cP$ and develop methodologies which perform well for the whole class. The set $\cP$ is often called \emph{uncertainty set}, since this idea can be traced back to Frank Knight's 
famous notion of \emph{uncertainty} introduced in \cite{knight1921risk}.

In this work we are concerned with large financial markets in a one-period setting 
under uncertainty and are interested in developing fundamental theorems which characterize the absence of asymptotic arbitrages of  first and second kind, extending existing results to the setting with uncertainty. More precisely, we understand a large financial market under uncertainty as a sequence of finite-dimensional financial markets under uncertainty, i.e., via a sequence of uncertainty sets $(\cP^n)_{n \ge 1}$. For the  finite-dimensional market we assume absence of arbitrage as  developed in \cite{BN:15}. 
We focus on the one-period case, the multi-period setting is the subject of future research.
Following \cite{KK:94}, we introduce robust formulations of arbitrages of  first and second kind.
Extending the classical formulations, we require in addition that there exists a (sub)sequence of probability measures $(P^k)_{k \ge 1}$ from the associated uncertainty sets $\cP^{n_k}$, such that $P^k(X^k_{1} \ge \alpha) \ge \alpha$ for some $\alpha >0$ for an  asymptotic arbitrage of first kind (AA1), and $P^k(X^k_{1} \ge \alpha) \to 1$ for an  asymptotic arbitrage of second kind (AA2). 
Here, $X^k_{1}$ are the terminal values of  sequences of the suitably bounded (i.e., bounded from below $\mathcal{P}^{n_k}$-quasi-surely) portfolios considered in the arbitrage strategy. 
In the case of AA1 the lower bound is vanishing with $k\to\infty$, whereas in the case of AA2 the bound is a uniform constant for all $k$. 
Under a technical condition on the set of probability measures $\mathcal{P}^n$, Theorem \ref{lfmftap} states that the robust version of NAA1 is equivalent to the following: for each sequence $(P^n)_{n \ge 1}$ of probability measures in $(\cP^n)_{n \ge 1}$ there exists a sequence of martingale measures $(Q^n)_{n \ge 1}$, suitably dominated, such that $(P^n)_{n \ge 1}$ is contiguous with respect to $(Q^n)_{n \ge 1}$. Theorem \ref{lfmftap2} shows that NAA2 is equivalent to weak contiguity of the set of suitably dominated martingale measures to every sequence $(P^n)_{n \ge 1}$ in $(\cP^n)_{n \ge 1}$ (again under the same technical condition on $\mathcal{P}^n$).
We then apply these results to a sequence  of one-period robust binomial models as studied in \cite{blanch_carassus}, \cite{liebrich2022} and \cite{klein-koestenberger-robust-duality}. In this setup it is possible to fully characterize  NAA1 and NAA2 in terms of the involved parameters, see Theorems \ref{thm_U0_finite} - \ref{thm_U0_infinite_AA2}. We also compare these results with the no arbitrage condition of \cite{BN:15} for the corresponding robust limit model. We show that one can construct examples with no arbitrage in the limit model while there exists some AA1 or some AA2, and also vice versa.

The main tool which we develop to prove the two fundamental theorems is a quantitative version (and its dual counterpart) of the Halmos-Savage Theorem for a convex set of probability measures $\cP$. The well-known theorem of Halmos and Savage 
proves the following: if $\mathcal{Q}$ is a 
set of probability measures on $(\Omega,\mathcal{F})$,  which is closed under countable convex combinations and which is dominated by some probability measure $P$, such that for each measurable $A$ with $P(A)>0$ there exists $Q\in\mathcal{Q} $ with $Q(A)>0$,  then
 there exists $Q\in \mathcal{Q}$ with full 
support, i.e., $Q\sim P$, see
\cite{HS:49}. In \cite{KS:96b} a quantitative and a dual quantitative version of this result was developed and applied to large financial markets. Here, we will further generalize these quantitative versions of  \cite{KS:96b} by replacing the fixed measure $P$ by a convex set of potentially non-dominated probability measures $\mathcal{P}$ and assume that $\mathcal{Q}\lll\mathcal{P}$ which is a natural extension, meaning that the set $\mathcal{P}$ ``dominates'' the set $\mathcal{Q}$ in the sense of Definition \ref{lll}. 
We additionally need that $\mathcal{P}$-quasi-surely bounded random variables (denoted by $\mathbb{L}^{\infty}$) are isomorphic to the dual space 
of all finite signed measures that are absolutely continuous with respect to at least one $P \in \mathcal{P}$.
This allows us to get a robust analogue of the classical $L^1$-$L^{\infty}$ duality and  enables us to apply our theorems to \emph{robust} large financial markets. We give precise formulations of these results in Section \ref{secHS}. The existing versions of  \cite{KS:96b} in the non-robust setting are recovered from the special case $\cP=\{P\}$. The key step for the proofs is to find an appropriate topological setting with a dual pair of normed spaces. With some additional effort, Sion's Minimax theorem can be applied, which then leads to Theorem~\ref{HS} and Theorem~\ref{dualHS}.

\subsection*{Related literature}

In modern approaches to financial markets, it is of fundamental importance to assess model risk in an appropriate way. Approaches which address this are typically called \emph{robust} and a large variety exist,  see, e.g., 
\cite{Den-Mar}, \cite{Peng-gexp},   \cite{Den-Ker}, \cite{Hu-Peng}, \cite{Tev-Tor},  \cite{Bia-Bou},  \cite{Lar-Acc}, and \cite{Neu-Nut}. Additionally, approaches relying on techniques of stochastic control and stochastic backward differential equations have been considered, see, e.g., \cite{Peng2019}, \cite{neufeld2017nonlinear},   \cite{fns_2019}, \cite{geuchen2022affine}, \cite{CriensNiemann},  and \cite{criens2023jumps} for the case with jumps. In this context, 
various notions  of arbitrage under uncertainty have been considered in the setting of Knightian uncertainty, see, e.g., \cite{BN:15}, \cite{blanch_carassus}, \cite{carassus2025quasi}, and \cite{bayraktar2017arbitrage}. Based on a pathwise approach, as, e.g., in  \cite{davis2007range} or \cite{acciaio2016model}, \cite{burzoni2016universal} have formulated a more general scenario-based approach. Other approaches like  \cite{carassus2022pricing} aim at weakest possible assumptions on the financial model. 
In \cite{obloj2021unified} these seemingly different approaches were unified under weak assumptions.

To build portfolio wealth processes in such model-free setups, \cite{PP:16} have developed a pathwise stochastic integration theory, where the second order lift of rough path theory is no longer needed, thus allowing for a natural financial interpretation. This approach has for instance been taken up in \cite{allan2023model, ALP:24}.
Asymptotic arbitrages have also been considered in relation to large insurance portfolios, see for example \cite{artzner2024insurance} and \cite{oberpriller2024robust}.

For some literature on large financial markets we refer, e.g., to \cite{KK:94}, where asymptotic arbitrage of first and second kind was introduced, to \cite{KK:98}, \cite{KS:96}, where the theory was further developed, to \cite{K:00}, \cite{CKT:14} for some very general versions of the fundamental theorem of asset pricing in continuous time based on the notion of asymptotic free lunch, and to \cite{R:03} for a result in discrete time.

\subsection*{Structure of the paper}

In Section \ref{secHS}, we provide the robust versions of the Halmos-Savage Theorem and an overview of the idea of the proofs, while Section~\ref{ProofHS} contains the detailed proofs.  In Section~\ref{sec:RLFM} we develop the fundamental theorems for robust large financial markets, Theorems \ref{lfmftap} and \ref{lfmftap2}.
In Section~\ref{App_robust_bin} we study sequences of robust one-period binomial models and their asymptotic arbitrage opportunities.

\section{A quantitative and a quantitative dual Halmos-Savage Theorem}\label{secHS}
Let $(\Omega,\mathcal{F})$ be a measure space and $\mathcal{P}$ a convex set of probability measures on $(\Omega,\mathcal{F})$.
In this section, we generalize the quantitative version and the dual quantitative version of the Halmos-Savage Theorem of \cite{KS:96b} to the setting of an convex, potentially non-dominated set $\mathcal{P}$ instead of a fixed probability measure $P$. The original versions of \cite{KS:96b} can be considered as special cases of our theorems with $\mathcal{P}=\{P\}$  being a singleton.

\begin{definition}\label{lll} For any set $\mathcal{R}$ of probability measures on $(\Omega,\mathcal{F})$ we say that $\mathcal{R}\lll\mathcal{P}$ if for every $R\in \mathcal{R}$ there exists $P\in \mathcal{P}$ such that $R\ll P$.
\end{definition}
We will use the notion quasi-surely (q.s. for short) as an appropriate generalization of almost surely to the setting with a set $\mathcal{P}$ of probability measures.
\begin{definition}
A set $A'\subset \Omega$ is a \emph{polar set} if 
 $A'\subset A$ for some $A \in\mathcal{F}$ with $P(A)=0$ for all $P\in\mathcal{P}$.
A property holds $\mathcal{P}$-quasi-surely if it holds outside a polar set.
\end{definition}

The statement of the robust Halmos-Savage Theorems requires a robust notion of the classical $L^{1}$-$L^{\infty}$ duality. 
Let $\mathcal{L}^0(\Omega,\mathcal{F})$ be the space of all measurable functions $f:(\Omega,\mathcal{F})\to(\mathbb{R},\mathcal{B})$ where $\mathcal{B}$ is the Borel $\sigma$-algebra on $\mathbb{R}$. 
Following \cite{DHuPe:2011} we define
\begin{align}\label{Linf}
  \mathcal{L}^{\infty} &=\{X\in \mathcal{L}^0(\Omega,\mathcal{F}): \exists\text{ a constant $M$, such that $|X|\leq M$, $\mathcal{P}$-q.s.}\}, \quad \text{and} \nonumber \\
\LL &= \mathcal{L}^{\infty}/ \mathcal{N},
\end{align}
where $\mathcal{N}=\{X\in \mathcal{L}^0(\Omega,\mathcal{F}): X=0,\ \mathcal{P} \text{-q.s.}\}$. Note that $\mathbb{L}^{\infty}$ is a Banach space with respect to the following norm
\begin{equation}\label{inftynorm}
\|X\|_{\mathbb{L}^\infty}=\inf\{M\geq 0: |X|\leq M, \mathcal{P}\text{-q.s.}\}.
\end{equation}
For any normed space $X$, we write $X'$ for its dual.
We denote with $E$ the space of all finite signed measures on $\mathcal{F}$ which are absolutely continuous with respect to at least one $P\in \mathcal{P}$. 
This space can be equipped with the total variation norm. 
Note that in the classical case $\mathcal{P} = \{P\}$, the space
$E$ is isometrically isomorphic to $L^{1}(P)$, and hence $E'$ is isometrically isomorphic to $L^{\infty}(P)$. Therefore, the space $E$ can be thought of as a robust version of an $L^{1}$ space. 
We will see in Section~\ref{ProofHS} that the space $\LL$ embeds isometrically into $E'$ via the map $l: \LL \to E', h\mapsto l_{h}$, where
\begin{equation*}
  l_{h}(\mu) = \int_{\Omega} h d \mu, \quad \mu\in E.
\end{equation*}
If $\mathcal{P} = \{P\}$, this is the usual surjective embedding of $L^{\infty}(P)$ into $L^{1}(P)'$.
It turns out, that $l(\LL)$ is always weak* dense in $E'$ (see Lemma \ref{lemma:robust-L-space-is-dense}). 
However, $l(\LL)$ may not be weak* closed. 
 In \cite{klein-koestenberger-robust-duality} it was shown that many models (including, in particular, robust versions of the binomial model, and of the Black-Scholes model) admit a canonical extension, on which $l(\LL) = E'$.
For a full characterization of this property in a slightly different setting, we refer the reader to \cite{liebrich2022}. 
We can now formulate the quantitative versions of the Halmos-Savage Theorem for convex sets of probability measures.

\begin{theorem}[Quantitative Halmos-Savage for a convex set of probabilities]\label{HS}
Assume that $l(\LL) = E'$.
Let $\mathcal{P}$ and  $\mathcal{Q}$ be convex sets of probability measures on $(\Omega,\mathcal{F})$ such that $\mathcal{Q}\lll\mathcal{P}$. 
Assume that for some fixed $\ep>0$ and $\delta>0$ the following holds:
for each $A\in\mathcal{F}$ such that there exists $P\in \mathcal{P}$ with $P(A)\geq\ep$ there exists $Q\in \mathcal{Q}$ with $Q(A)\geq\delta$. Then for each $P\in \mathcal{P}$ there exists $Q\in\mathcal{Q} $ such that for each $A\in\mathcal{F}$ with $P(A)\geq 2\ep$ we have that $Q(A)\geq\frac{\ep\delta}{2}$.
\end{theorem}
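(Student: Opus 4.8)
The plan is to fix $P\in\mathcal P$ and run a minimax argument over the convex sets $\mathcal P$ and $\mathcal Q$ to produce the desired $Q\in\mathcal Q$. First I would set up a dual pair of normed spaces: work on the measurable space $(\Omega,\mathcal F)$ and consider, on the one hand, bounded measurable functions $f\colon\Omega\to[0,1]$ (or, after normalization, the set of densities/test functions) and, on the other hand, the probability measures in $\mathcal Q$. The natural bilinear form is $(f,Q)\mapsto \int f\,dQ = \EE^Q[f]$. The hypothesis ``for each $A$ with $P(A)\ge 2\ep$ there is $Q\in\mathcal Q$ with $Q(A)\ge\delta$'' should be rephrased: we want a \emph{single} $Q$ that works uniformly, with a possibly worse constant. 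So the quantity to analyze is
\[
\sup_{Q\in\mathcal Q}\ \inf_{A:\,P(A)\ge 2\ep}\ Q(A),
\]
and the goal is to show it is at least $\ep\delta/2$. To apply Sion's minimax theorem I would relax the inner infimum over indicator-type sets to an infimum over a convex set of functions — e.g. functions $g$ with $0\le g\le 1$ and $\int g\,dP \ge$ something, or more cleanly the functions of the form $g = \ind_A$ convexified — and check that $Q\mapsto \int g\,dQ$ is concave (in fact affine) and upper semicontinuous in a suitable (weak-$*$) topology on $\mathcal Q$, while $g\mapsto\int g\,dQ$ is affine and lower semicontinuous; convexity of $\mathcal Q$ is given, and convexity of the function side must be arranged by the choice of the test-function set.

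The key steps, in order, are: (i) translate the pointwise hypothesis into the statement that for every test function $g$ in the chosen convex set there exists $Q\in\mathcal Q$ with $\int g\,dQ\ge c$ for an appropriate constant $c$ coming from $\ep,\delta$ — here one uses that $P(A)\ge 2\ep$ together with $P$-smallness of complements lets one pass between sets and functions, and one must be careful to quantify the loss (this is where the factor $\ep/2$ enters, via an averaging/Markov-type estimate); (ii) identify the right topology making $\mathcal Q$ (weak-$*$) compact, or reduce to a situation where Sion's theorem applies without full compactness — since $\mathcal Q$ need not be compact, I expect to either pass to a closure in a weak topology on the ambient $\ca$-type space and argue the limiting measure still lies in $\mathcal Q$ via the domination $\mathcal Q\lll\mathcal P$, or to invoke a version of minimax that only needs compactness on one side; (iii) apply Sion's minimax theorem to interchange $\sup_Q$ and $\inf_g$, concluding that there is a $Q$ with $\inf_g\int g\,dQ\ge c$; (iv) specialize back: for $A$ with $P(A)\ge 2\ep$ the indicator $\ind_A$ (or its appropriate scaling) is an admissible $g$, so $Q(A)\ge c = \ep\delta/2$.

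The main obstacle I anticipate is step (ii): the sets $\mathcal P$ and $\mathcal Q$ carry no topology a priori, and $\mathcal Q$ need not be weak-$*$ compact, so setting up a dual pair in which Sion's theorem is genuinely applicable — with compactness and the correct semicontinuity on the correct side — is the delicate part. The domination hypothesis $\mathcal Q\lll\mathcal P$ is presumably what is used to control the densities $dQ/dP$ (they live in $L^1(P)$ when $Q\ll P$) and to keep weak limits inside the relevant class; making this precise, together with tracking the quantitative constants through the convexification in step (i) so that exactly $\ep\delta/2$ survives (rather than something smaller), is where the real work lies. The passage from $2\ep$ to $\ep$ in the threshold strongly suggests an intermediate step where one splits a set of $P$-measure $\ge 2\ep$ using a function of $P$-integral on the order of $\ep$, which is the combinatorial heart of the estimate; everything else should be routine functional analysis once the dual pair is correctly chosen.
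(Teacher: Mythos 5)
Your outline follows the same route as the paper: convexify the family of indicators to a set of test functions $h$ with $0\le h\le 1$ $\mathcal P$-q.s.\ and $E_P[h]\ge 2\ep$, use a Markov-type estimate to show that $\inf_h\sup_{Q\in\mathcal Q}E_Q[h]\ge\ep\delta$ (from $2\ep\le E_P[h]\le \ep+(1-\ep)P(h\ge\ep)$ one gets $P(\{h\ge\ep\})\ge\ep$, so the hypothesis applies to $A=\{h\ge\ep\}$ and $E_Q[h]\ge\ep Q(A)\ge\ep\delta$), interchange $\inf$ and $\sup$ by Sion, pick a near-optimal $Q$, and test against $\ind_A$. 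Your reading of where the constants come from is essentially right: the remaining factor $\tfrac12$ is simply the price of selecting a $Q$ within a factor of two of a supremum that need not be attained.

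The gap is in your step (ii), and it points in the wrong direction. Sion's theorem requires compactness on exactly one side, and here it must be the side of the \emph{test functions}, not of $\mathcal Q$: the paper applies Sion with $X=\mathcal Q$ merely convex and $Y=D^{\ep,P}$ compact. Your proposed remedy --- closing $\mathcal Q$ in a weak topology and arguing the limit stays in $\mathcal Q$ --- is both unnecessary (the factor $\tfrac12$ already absorbs non-attainment of the sup over $\mathcal Q$) and unworkable (a general convex $\mathcal Q$ is not weakly closed, and nothing forces a limit point back into $\mathcal Q$). What you actually need, and do not supply, is a topology in which the set $D^{\ep,P}=\{h\in\LL: 0\le h\le 1\ \mathcal P\text{-q.s.},\ E_P[h]\ge 2\ep\}$ is compact while $Q\mapsto E_Q[h]$ remains continuous for every $Q\in\mathcal Q$. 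In the robust setting there is no single dominating measure, so the usual $\sigma(L^\infty(P),L^1(P))$-compactness of the order interval is not available. The paper's resolution is to take $E$ to be the space of finite signed measures dominated by \emph{some} element of $\mathcal P$ (a normed space under total variation, where convexity of $\mathcal P$ supplies a common dominating measure for any two elements), to embed the robust $\LL$ isometrically into $E'$, and to obtain compactness of $D^{\ep,P}$ from Banach--Alaoglu together with a separate argument that $D^{\ep,P}$ is $\sigma(\LL,E)$-closed, which itself requires expressing the quasi-sure constraint $0\le h\le 1$ as an intersection of weakly closed half-spaces indexed by all $Q\ll P'$, $P'\in\mathcal P$. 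This construction is the real content of the theorem and is exactly the part your proposal defers.
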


Note that Proposition~1.3 of  \cite{KS:96b} is a special case of Theorem~\ref{HS} with $\mathcal{P}=\{P\}$ and $\mathcal{Q}=M$. Moreover, we can also generalize the dual result, that is, Proposition~1.5 of  \cite{KS:96b}: 

\begin{theorem}[Dual quantitative Halmos-Savage for a convex set of probabilities]\label{dualHS}
Assume that $l(\LL) = E'$.
Let $\mathcal{P}$ and  $\mathcal{Q}$ be convex sets of probability measures on $(\Omega,\mathcal{F})$ such that $\mathcal{Q}\lll\mathcal{P}$.
Assume that for some fixed $\ep>0$ and $\delta>0$ the following holds:
for each $A\in\mathcal{F}$ such that there exists $P\in \mathcal{P}$ with $P(A)<\delta$ there exists $Q\in \mathcal{Q}$ with $Q(A)<\ep$. Then for each $P\in \mathcal{P}$ there exists $Q\in\mathcal{Q} $ such that for each $A\in\mathcal{F}$ with $P(A)<\ep\delta$ we have that $Q(A)<2\ep$.
\end{theorem}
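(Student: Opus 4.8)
The plan is to mirror the proof of \thmref{HS}, with the roles of ``large'' and ``small'' measurable sets interchanged, and to recast the problem as a minimax over a convex set of functions. First, note that we may assume $\ep<1$, since for $\ep\geq 1$ the bound $Q(A)<2\ep$ is automatic. Now fix $P\in\mathcal{P}$ and put
\[
K:=\bigl\{\,f\colon\Omega\to[0,1]\ \text{measurable}\ :\ \textstyle\int f\,dP\leq\ep\delta\,\bigr\},
\]
a convex set that contains $\mathbf{1}_{A}$ for every $A$ with $P(A)\leq\ep\delta$. It therefore suffices to produce a single $Q\in\mathcal{Q}$ with $\sup_{f\in K}\int f\,dQ<2\ep$, for then $Q(A)<2\ep$ whenever $P(A)<\ep\delta$. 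The strategy is to obtain such a $Q$ from the minimax identity
\[
\inf_{Q\in\mathcal{Q}}\ \sup_{f\in K}\ \int f\,dQ\ =\ \sup_{f\in K}\ \inf_{Q\in\mathcal{Q}}\ \int f\,dQ ,
\]
combined with the bound $\sup_{f\in K}\inf_{Q\in\mathcal{Q}}\int f\,dQ\leq 2\ep-\ep^{2}$ on its right-hand side.

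To prove that bound, fix $f\in K$ and use the layer-cake formula: writing $g(t):=P(\{f>t\})$ one has $\int_{0}^{1}g(t)\,dt=\int f\,dP\leq\ep\delta$, and since $g$ is non-increasing this already forces $P(\{f>t\})<\delta$ for every $t\in(\ep,1)$ — otherwise $g\geq\delta$ on $[0,t]$ and the integral would exceed $\ep\delta$. For such a $t$ the hypothesis, applied to $A=\{f>t\}$ with the witness $P$ itself, provides some $Q_{t}\in\mathcal{Q}$ with $Q_{t}(\{f>t\})<\ep$. Splitting $\int f\,dQ_{t}=\int_{0}^{1}Q_{t}(\{f>s\})\,ds$ at $s=t$ and bounding the integrand by $1$ for $s\leq t$ and by $Q_{t}(\{f>t\})<\ep$ for $s>t$ yields $\int f\,dQ_{t}\leq t+(1-t)\ep$; letting $t\downarrow\ep$ gives $\inf_{Q\in\mathcal{Q}}\int f\,dQ\leq 2\ep-\ep^{2}$, and taking the supremum over $f\in K$ gives the claimed bound.

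The technical heart is the minimax identity, and this is where I expect the main difficulty to lie; it is also the step shared with the proof of \thmref{HS} and the source of the ``additional effort'' referred to in Section~\ref{secHS}. The functional $(f,Q)\mapsto\int f\,dQ$ is affine — hence quasi-concave — in $f$ and affine — hence quasi-convex — in $Q$, and $K$ and $\mathcal{Q}$ are convex, so the only non-formal ingredients needed for Sion's minimax theorem are: a topology in which $K$ is compact and $f\mapsto\int f\,dQ$ is upper semicontinuous for every $Q\in\mathcal{Q}$, together with a compatible topology on $\mathcal{Q}$ making $Q\mapsto\int f\,dQ$ lower semicontinuous. The natural candidate is to realise $K$ as a weak-$\ast$ compact convex subset of an $L^{\infty}$-space paired with its $L^{1}$-predual. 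The obstacle is that $\mathcal{Q}$ is only assumed to satisfy $\mathcal{Q}\lll\mathcal{P}$ and need not be dominated by the fixed $P$, so $\int f\,dQ$ is a priori not even defined on $L^{\infty}(P)$-equivalence classes, let alone weak-$\ast$ continuous; resolving this — e.g.\ by first extracting a dominated (say countable) subfamily of $\mathcal{Q}$ on which the hypothesis can still be invoked, or by working in a dual pair of normed spaces large enough to carry every relevant $Q$ and then verifying that the resulting (approximate) minimiser may be taken in $\mathcal{Q}$ rather than merely in a weak-$\ast$ compactification — is precisely the delicate point, to be handled as in \thmref{HS}.

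Once the minimax identity is available, its left-hand side equals $\sup_{f\in K}\inf_{Q\in\mathcal{Q}}\int f\,dQ\leq 2\ep-\ep^{2}<2\ep$, so there is $Q\in\mathcal{Q}$ with $\sup_{f\in K}\int f\,dQ<2\ep$; evaluating at $f=\mathbf{1}_{A}$ for any $A$ with $P(A)<\ep\delta$ then gives $Q(A)<2\ep$, which is the assertion. (One might hope instead to read \thmref{dualHS} off \thmref{HS} by complementation, but passing to $A^{c}$ turns ``$P$-small'' into ``$P$-close-to-one'' rather than into ``$P$-small'', so a direct argument of the above type seems unavoidable.)
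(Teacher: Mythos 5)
Your proposal is correct and follows essentially the same route as the paper: your set $K$ is (up to working with genuine functions rather than classes in $\LL$, and imposing the $[0,1]$-bound pointwise rather than $\mathcal{P}$-quasi-surely) the paper's $\widetilde{D}^{\ep\delta,P}$, your layer-cake bound $\sup_{f}\inf_{Q}\int f\,dQ\le 2\ep-\ep^{2}$ is exactly Lemma~\ref{basicLem}(ii), and the minimax step you defer to the machinery of \thmref{HS} is precisely how the paper handles it, namely via Sion's theorem in the dual pair $(E,E')$ together with Lemma~\ref{weak*compactness_2} applied to $\widetilde{D}^{\ep\delta,P}$. The difficulty you correctly isolate --- that $\mathcal{Q}$ need not be dominated by the fixed $P$ --- is resolved in the paper by the second of your suggested options: pairing $\LL$ with the normed space $E$ of finite signed measures absolutely continuous with respect to some member of $\mathcal{P}$.
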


The proofs of the above theorems are inspired by the proofs of the results in \cite{KS:96b}, in particular by the modified proof that was given 
in \cite{K:thesis}. However, the difficulty lies in finding  an appropriate topological setting tailored to the current robust situation where we deal with convex, potentially non-dominated sets of probability measures.  The remainder of this section will be devoted to the basic ideas of the proofs. The details will be presented in Section~\ref{ProofHS}.

Similarly as in  \cite{K:thesis}, our proof starts with the definition of the following sets.
\smallskip

\begin{definition}\label{the_sets_D} Fix $P\in\mathcal{P}$ and $\ep,\delta>0$. Define the following subsets of $\mathbb{L}^{\infty}$:
\begin{enumerate}
\item
$D^{\ep,P}=\{h\in\mathbb{L}^{\infty} \colon  0\leq h \leq 1\  \mathcal{P}\text{-q.s. and }E_P[h]\geq 2\ep\}$, and 
\item $\widetilde{D}^{\ep\delta, P}=\{h\in\mathbb{L}^{\infty} \colon  0\leq h \leq 1\  \mathcal{P}\text{-q.s. and }E_P[h]\leq \ep\delta\}$.
\end{enumerate}
\end{definition}
Observe that the sets $ D^{\ep,P}$ and $\widetilde{D}^{\ep\delta, P}$ are convex subsets of $\LL$. Indeed, the property of being quasi-surely bounded below by $0$ and above by $1$ obviously carries over to a convex combination. The property of the expected value under $P$ being $\geq 2\ep$ (or $\leq\ep\delta$) only has to hold with respect to the fixed measure $P$. Hence, the property carries over to any convex combination.
\smallskip

  Let us briefly outline the proofs of Theorems \ref{HS} and \ref{dualHS}. 
  It suffices to focus on Theorem \ref{HS}, as Theorem \ref{dualHS} can be proved using almost identical arguments.
  In Lemma \ref{basicLem} we establish that the assumptions of Theorem \ref{HS} imply
  \begin{equation*}
\newinf_{h\in D^{\ep, P}}\sup_{Q\in \mathcal{Q}}E_Q[h]\geq\ep\delta.
\end{equation*}
If we could exchange the infimum with the supremum, i.e., if
\begin{equation*}
  \sup_{Q\in \mathcal{Q}}\newinf_{h\in D^{\ep, P}}E_Q[h] = \newinf_{h\in D^{\ep, P}}\sup_{Q\in \mathcal{Q}}E_Q[h]\geq\ep\delta,
\end{equation*}
then Theorem \ref{HS} follows almost immediately (see page \pageref{proof:HS} for details). 
The following Minimax Theorem, see \cite{Sion}, addresses precisely this issue:

\begin{theorem}[Sion's 
Minimax Theorem]\label{thm:sion}
  Let $T$ and $S$ be topological vector spaces, $X \subseteq T$ convex and $Y \subseteq S$ convex and compact. Assume that $f: X \times Y \to \mathbb{R}$ satisfies
  \begin{enumerate}
  \item $f(\cdot, y)$ is upper semicontinuous and quasi-concave on $X$, for every fixed $y\in Y$, and 
  \item $f(x, \cdot)$ is lower semicontinuous and quasi-convex on $Y$, for every fixed $x\in X$.
  \end{enumerate}
  Then we have
  \begin{equation*}
 \sup _{x\in X}\newinf_{y\in Y}f(x,y)=\newinf _{y\in Y}\sup _{x\in X}f(x,y).
  \end{equation*}

\end{theorem}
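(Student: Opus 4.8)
The direction $\sup_{x\in X}\newinf_{y\in Y}f(x,y)\le\newinf_{y\in Y}\sup_{x\in X}f(x,y)$ comes for free: for every fixed $x_0\in X$ we have $\newinf_{y}f(x_0,y)\le\newinf_{y}\sup_{x}f(x,y)$, and taking the supremum over $x_0$ gives the claim. So the whole content is the reverse inequality; since this is precisely the classical minimax theorem of \cite{Sion}, one legitimate option is simply to cite it, but a self-contained argument proceeds by contradiction, and the plan is as follows. Suppose there were a real number $c$ with $\sup_{x\in X}\newinf_{y\in Y}f(x,y)<c<\newinf_{y\in Y}\sup_{x\in X}f(x,y)$. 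From $c<\sup_{x}f(x,y)$ for every $y\in Y$ together with the lower semicontinuity of each $f(x,\cdot)$, the superlevel sets $U_x:=\{y\in Y:f(x,y)>c\}$ are open and cover the compact set $Y$; hence finitely many $x_1,\dots,x_n\in X$ suffice, i.e.\ $\max_{1\le i\le n}f(x_i,y)>c$ for all $y\in Y$.

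I would then show, by induction on $n$ (the case $n=1$ being immediate, with $x^\star=x_1$), that this forces the existence of a single point $x^\star\in\conv\{x_1,\dots,x_n\}\subseteq X$ with $f(x^\star,y)>c$ for every $y\in Y$. Since $Y$ is compact and $f(x^\star,\cdot)$ is lower semicontinuous, the infimum $\newinf_{y\in Y}f(x^\star,y)$ is attained and hence $>c$, contradicting $\sup_{x}\newinf_{y}f(x,y)<c$ and finishing the proof. The inductive step reduces matters to the case $n=2$, the genuine \emph{two-point lemma}: if $x',x''\in X$ satisfy $\max\bigl(f(x',y),f(x'',y)\bigr)>c$ for all $y\in Y$, then some point $x$ on the segment joining $x'$ and $x''$ satisfies $f(x,y)>c$ for all $y\in Y$. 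Granting this, given covering points $x_1,\dots,x_{n+1}$ one sets $Y':=\{y\in Y:f(x_{n+1},y)\le c\}$, which is closed and convex by the lower semicontinuity and quasi-convexity of $f(x_{n+1},\cdot)$; if $Y'=\varnothing$ one is done with $x^\star=x_{n+1}$, and otherwise $Y'$ is a compact convex set on which $\max_{i\le n}f(x_i,\cdot)>c$, so the induction hypothesis (applied with $Y$ replaced by $Y'$, all hypotheses of the theorem being inherited) yields $\bar x\in\conv\{x_1,\dots,x_n\}$ with $f(\bar x,\cdot)>c$ on $Y'$; by construction $\max\bigl(f(\bar x,y),f(x_{n+1},y)\bigr)>c$ for every $y\in Y$, and the two-point lemma applied to $\bar x$ and $x_{n+1}$ produces the required $x^\star$, which lies in $\conv\{x_1,\dots,x_{n+1}\}$.

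For the two-point lemma the plan is to work with the sublevel sets $B_x:=\{y\in Y:f(x,y)\le c\}$ for $x$ on the segment $[x',x'']$. Each $B_x$ is closed and convex — hence compact (a closed subset of the compact $Y$) and connected — the sets $B_{x'}$ and $B_{x''}$ are \emph{disjoint} (this is exactly the hypothesis $\max>c$), and quasi-concavity of $f(\cdot,y)$ gives $B_x\subseteq B_{x'}\cup B_{x''}$ for every $x$ on the segment. If none of the $B_x$ were empty, each of them, being connected and contained in the disjoint union of the two closed sets $B_{x'}$ and $B_{x''}$, would lie entirely in one of them; writing $x_t=(1-t)x'+tx''$, the two sets $T':=\{t\in[0,1]:B_{x_t}\subseteq B_{x'}\}$ and $T'':=\{t\in[0,1]:B_{x_t}\subseteq B_{x''}\}$ would then be nonempty (containing $0$ and $1$ respectively), disjoint, and cover $[0,1]$, contradicting the connectedness of $[0,1]$ once one checks that both are closed. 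Hence some $B_{x_t}=\varnothing$, i.e.\ $f(x_t,\cdot)>c$ on $Y$, which is what the two-point lemma asserts.

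The step I expect to be the main obstacle is exactly this last verification, that $T'$ and $T''$ are closed: there is no joint continuity of $f$ at our disposal, only upper semicontinuity in the first variable and lower semicontinuity in the second, so closedness of these parameter sets has to be extracted from a subsequence argument that plays the compactness of the $B_x$'s against the two one-sided semicontinuity properties and the disjointness of $B_{x'}$ and $B_{x''}$. This is the technical heart of Sion's original proof; the reduction in the first paragraph to finitely many $x_i$ (and, if convenient, a symmetric reduction producing finitely many $y_j$ so that one may also assume $Y$ finite-dimensional) is precisely what keeps these compactness manipulations under control.
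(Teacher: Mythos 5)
The paper does not actually prove this statement: it is quoted verbatim from \cite{Sion} and used as an external ingredient, so simply citing it --- as you note is legitimate --- is exactly what the authors do. Your sketch of a self-contained argument follows the standard elementary route (easy inequality, finite subcover of the compact $Y$ by the open superlevel sets, induction on the number of covering points, and a two-point lemma proved by a connectedness argument along the segment), and this architecture is correct.

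However, the one step you defer is a genuine gap, and in the form you set it up it does not merely require more work --- it fails. With a single level $c$ and the sets $B_x=\{y\in Y: f(x,y)\le c\}$, the closedness of $T'=\{t\in[0,1]: B_{x_t}\subseteq B_{x'}\}$ cannot be extracted from upper semicontinuity of $f(\cdot,y)$: for $t_n\to t$ and $y\in B_{x_t}$, usc only yields $\limsup_n f(x_{t_n},y)\le f(x_t,y)\le c$, which does not place $y$ in $B_{x_{t_n}}$ (one would need $f(x_{t_n},y)\le c$, not $f(x_{t_n},y)< c+\varepsilon$ eventually). The standard repair (Komiya's elementary proof) is a two-level argument: since $g(y):=\max\bigl(f(x',y),f(x'',y)\bigr)$ is lower semicontinuous on the compact set $Y$, its infimum is attained, so one may choose $\beta$ with $c<\beta<\min_{y\in Y}g(y)$ and work simultaneously with $A_z=\{y: f(z,y)\le c\}$ and $C_z=\{y: f(z,y)\le\beta\}$, defining $T'$ by the condition $A_{x_t}\subseteq C_{x'}$. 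Then the usc estimate $\limsup_n f(x_{t_n},y)\le c<\beta$ does put $y$ in $C_{x_{t_n}}$ for large $n$, and connectedness of the convex set $C_{x_{t_n}}$ inside the disjoint union $C_{x'}\cup C_{x''}$ forces $y\in C_{x'}$, giving closedness. Without introducing this intermediate level $\beta$, the dichotomy scheme you describe does not go through, so as written the proposal is incomplete precisely at its technical heart.
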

We want to apply this theorem with $X= \mathcal{Q}$, $Y = D^{\varepsilon,P}$, and $f(Q,h) = E_{Q}[h]$. 
Using the embedding $l: \LL\to E'$, we can identify $D^{\varepsilon,P}$ with its image $l(D^{\varepsilon,P})$.
Note that $\mathcal{Q}$ and $l(D^{\varepsilon,P})$ are convex, and that the function $f(Q,h) = E_{Q}[h]$ is linear in both its arguments, and hence concave in $Q$ and convex in $h$. 
The continuity of $f$ is a direct consequence of the topological setup, see Section \ref{ProofHS}. 
The assumption $l(\LL) = E'$ guarantees that $l(D^{\varepsilon,P})$ is a weak* closed subset of $E'$.
The Banach-Alaoglu Theorem implies that $l(D^{\varepsilon,P})$ is actually weak* compact. 
Hence, the assumptions of Sion's Minimax Theorem are satisfied, and we can conclude the proof of Theorem \ref{HS} by the argument above.

\section{Proofs  of Theorem~\ref{HS} and Theorem \ref{dualHS}}\label{ProofHS}

Let us start by recalling some properties of the total variation norm.
For $\mu\in E$ (with $\mu\ll P'$) let $\mu=\mu^+-\mu^-$ be the Hahn-Jordan decomposition of $\mu$ and let $\Omega=\Omega^+\cup\Omega^-$ be the corresponding Hahn-decomposition of $\Omega$. 
The (non-negative) measures $\mu^+$ and $\mu^-$ are defined as follows: $\mu^+(A)=\mu(A\cap \Omega^+)$ and $\mu^-(A)=-\mu(A\cap \Omega^-)$, for all $A\in\mathcal{F}$. 
Clearly, $\mu^+(\Omega)=\mu(\Omega^+)$, $\mu^-(\Omega)=\mu(\Omega^-)$, and $\mu(\Omega)=\mu^+(\Omega^+)-\mu^-(\Omega^-)$.
Moreover,   as $\mu^+, \mu^-\ll\mu\ll P'$, for some $P'\in\mathcal{P}$, we have that $\mu^+, \mu^-\in E $.
The total variation norm on $E$ denoted by $\|\cdot\|$ is given by 
\begin{equation}\| \mu\|= \mu^+(\Omega^+)+\mu^-(\Omega^-).\label{totvarn}\end{equation}
We will give the short proof that $E$ is a normed vector space where the zero vector in $E$ is the zero measure $\mu_0$ with $\| \mu_0\|=0$, i.e., for its Hahn-Jordan decomposition we have that $\mu_0(\Omega^+)=\mu_0(\Omega^-)=0$. In particular, $\mu_0\ll P'$ for all $P'\in\mathcal{P}$.

\begin{lemma}\label{lemma:E-is-normed-space} If $\mathcal{P}$ is convex, then, $E$ is a normed vector space.
  If $\mathcal{P}$ is closed under countable convex combinations, then $E$ is a Banach space. 
\end{lemma}

\begin{proof}
Let $\mu_1$, $\mu_2$ be in $E$ and $c\in\mathbb{R}$.  Then $\mu_1+c\mu_2$ obviously is a finite signed measure. Moreover, by assumption, $\mu_i\ll P_i$, $i=1,2$, where $P_1,P_2\in\mathcal{P}$. As by assumption $\mathcal{P}$ is convex, $P'=\frac{P_1+P_2}{2}\in\mathcal{P}$ and $\mu_i\ll P'$ for $i=1,2$. Hence $\mu_1+c\mu_2\ll P'$. The total variation norm is a norm.

Now, observe that if $\mathcal{P}$ is not only convex but also contains all countable convex combination of measures $P_n\in\mathcal{P}$, then $E$ is a Banach space. 
Indeed, let $\mu_n\to\mu$ in total variation norm. Obviously, $\mu$ is a finite signed measure. 
By assumption, for each $\mu_n$ there exists $P_n\in\cP$ such that $\mu_n\ll P_n$. Define $P=\sum_{n=1}^{\infty}2^{-n}P_n\in\mathcal{P}$. 
Then $\mu_n\ll P$, for all $n$, and the Vitali-Hahn-Saks Theorem implies $\mu\ll P$.
\end{proof}

Next, we would like to define a \emph{compatible} topology $\sigma(\LL, E)$ on $\mathbb{L}^{\infty}$. 
Denote by $B: E\times \LL \to \mathbb{R}$ the bilinear form given by
\begin{equation}\label{bilinearform}
    B(\mu, h) = \int_{\Omega}h d \mu.
\end{equation} 
$B$ separates the points on $E$ and $\LL$, i.e., 
\begin{enumerate}
\item $B(\mu, h) = 0$ for all $h\in \LL$ implies $\mu = 0$, and 
\item $B(\mu, h) = 0$ for all $\mu\in E$ implies $h = 0$. 
\end{enumerate}
Hence, the family of seminorms $\{h \mapsto |B(\mu, h)|: \mu \in E\}$ on $\LL$ defines a locally convex Hausdorff topology on $\LL$, which we denote by $\sigma(\LL, E)$. 
Similarly, the weak* topology on $E'$, denoted with $\sigma(E',E)$, is the locally convex Hausdorff topology induced by the seminorms $\{ y \mapsto |y(\mu)|: \mu\in E\}$ on the dual space $E'$.

Recall that the dual space $E'$  of the normed vector space $(E,\|\cdot\|)$ is a Banach space, where the norm of $y\in E'$ is given by

\begin{equation}\label{norm_dual}
  \|y \|_{E'}=\sup_{\substack{\mu\in E\\ \| \mu\|\leq 1}}|y(\mu)|.
\end{equation} 
Furthermore, we denote with
\begin{equation}\label{bilinEE'}
\langle \mu,y \rangle =y(\mu)
\end{equation} the bilinear form of the dual pair $(E,E')$.

\begin{definition}\label{embedding} Let $E^{*}$ be the algebraic dual of $E$, that is, the space of all linear functionals on $E$.
Define the map $l: \LL\to E^{*}$ as follows: for $h\in\LL$ let $l(h)=l_h$ where
\begin{equation}\label{linFunc} l_h(\mu)=B(\mu,h)=\int_{\Omega} hd\mu, \text{ for all $\mu\in E$}. \end{equation}
\end{definition}

\begin{lemma}\label{iso_embed}  Let $l(\LL)=\{l(h):h\in \LL\}$. Then   $l(\LL)\subset E'$, that means each $l_h$ as in (\ref{linFunc}) is a continuous linear functional on $E$. Moreover the embedding $l$ of Definition~\ref{embedding} is isometric, i.e., $$\|l_h\|_{E'}=\|h\|_{\LL}.$$
\end{lemma}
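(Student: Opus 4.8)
The plan is to prove the two assertions in sequence. Linearity of $l_Y$ in $\mu$, and of the map $Y\mapsto l_Y$, is immediate from linearity of the integral; before anything else one should also check that $l$ is well defined on equivalence classes, which holds because if $Y=Y'$ $\mathcal{P}$-q.s.\ then $\{Y\neq Y'\}$ is polar, hence null under the dominating measure $P'$ of any given $\mu\in E$, hence $|\mu|$-null, so that $l_Y(\mu)=l_{Y'}(\mu)$.

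To show $l_Y\in E'$ together with the estimate $\|l_Y\|_{E'}\le\|Y\|_{\mathbb{L}^{\infty}}$, I would fix $\mu\in E$, say $\mu\ll P'$ with $P'\in\mathcal{P}$, and write $|\mu|=\mu^{+}+\mu^{-}$ for the total variation measure, noting that $|\mu|\ll P'$ and $\|\mu\|=|\mu|(\Omega)$ by \eqref{totvarn}. Since $|Y|\le\|Y\|_{\mathbb{L}^{\infty}}$ holds $\mathcal{P}$-q.s., the exceptional set is polar, hence $P'$-null, hence $|\mu|$-null, and therefore
\begin{equation*}
|l_Y(\mu)| = \Bigl| \int_{\Omega} Y \, d\mu \Bigr| \le \int_{\Omega} |Y| \, d|\mu| \le \|Y\|_{\mathbb{L}^{\infty}} \, \|\mu\| .
\end{equation*}
This bound shows that $l_Y$ is continuous on $(E,\|\cdot\|)$ and gives the first inequality.

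For the reverse inequality $\|l_Y\|_{E'}\ge\|Y\|_{\mathbb{L}^{\infty}}$, which then forces equality, I would take any $c$ with $0\le c<\|Y\|_{\mathbb{L}^{\infty}}$ (there is nothing to prove when $\|Y\|_{\mathbb{L}^{\infty}}=0$). By the definition of the norm in \eqref{inftynorm}, the inequality $|Y|\le c$ cannot hold $\mathcal{P}$-q.s., so the measurable set $\{|Y|>c\}$ is not polar, and consequently there is $P'\in\mathcal{P}$ with $P'(|Y|>c)>0$. Then at least one of $A^{+}:=\{Y>c\}$ and $A^{-}:=\{Y<-c\}$ has strictly positive $P'$-measure; assuming $P'(A^{+})>0$ (the case $P'(A^{-})>0$ being handled analogously), the measure $\mu:=P'(A^{+})^{-1}\,P'(\,\cdot\cap A^{+})$ is a probability measure with $\mu\ll P'$, hence $\mu\in E$ with $\|\mu\|=1$, and
\begin{equation*}
l_Y(\mu) = \frac{1}{P'(A^{+})} \int_{A^{+}} Y \, dP' \ge c .
\end{equation*}
Hence $\|l_Y\|_{E'}\ge c$, and letting $c\uparrow\|Y\|_{\mathbb{L}^{\infty}}$ finishes the proof.

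The one point that needs care — and precisely where the passage from a single measure $P$ to a convex set $\mathcal{P}$ is felt — is the recurrent translation between ``$\mathcal{P}$-quasi surely'' statements about $Y$ and ``$\mu$-almost everywhere'' statements: one must consistently use that a polar set is null under the particular dominating measure $P'$ attached to the measure $\mu$ at hand, together with $\mu\ll P'\Rightarrow|\mu|\ll P'$, and, in the lower bound, extract such a witnessing $P'$ from the failure of $|Y|\le c$ quasi surely. No compactness or minimax argument enters here; this lemma is only the isometric-embedding bookkeeping that sets up the dual pair $(E,E')$ used in the sequel.
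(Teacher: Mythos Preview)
Your proof is correct and follows essentially the same approach as the paper. The only cosmetic difference is that for the lower bound the paper packages both signs at once into a single signed measure $\mu_\varepsilon$ with density $\frac{1}{P_\varepsilon(A_\varepsilon)}(\ind_{A_\varepsilon^+}-\ind_{A_\varepsilon^-})$, whereas you pick whichever of $A^+,A^-$ has positive mass and use a probability measure; your explicit check of well-definedness on $\mathcal{P}$-q.s.\ equivalence classes is a nice addition that the paper omits.
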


\begin{proof}
Let $h\in\LL$ and define $l_h$ as above.  We only have to show that $l_h: E\to\mathbb{R}$ is continuous.
Note that for every $\mu\in E$ there exists $P'\in\mathcal{P}$ with $\mu\ll P'$. Let again $\mu=\mu^+-\mu^-$ be the Hahn-Jordan decomposition of $\mu$. Then $\mu^+\ll P'$
and $\mu^- \ll P'$. As $h\in\LL$ we have that $P'(|h|\leq M)=1$, where $M=\| h\|_{\LL}$. Hence $\mu^+(|h|>M)=\mu^-(|h|>M)=0$. Therefore we get, for every $\mu\in E$,
\begin{align*}
|l_h(\mu)| &\leq \int |h|d\mu^++  \int |h|d\mu^-\\
&\leq M(\mu^+(\Omega^+)+\mu^-(\Omega^-))=\|h\|_{\LL}\cdot\| \mu\|.
\end{align*}
This implies that $l_h$ is continuous and $\| l_h\|_{E'}\leq \|h\|_{\LL}$.

To show that $l$ is an isometry, note that for each $\ep>0$, there exists $P_{\ep}\in\cP$ with $P_{\ep}(A_{\ep})>0$, where $A_{\ep}:=\{|h|>M-\ep\}$. 
Define  $A_{\ep}^+=A_{\ep}\cap\{h\geq 0\}$ and $A_{\ep}^-=A_{\ep}\cap\{h<0\}$ and define, for each $\ep>0$,  a finite signed measure $\mu_{\ep}\in E$ by its Radon Nikodym derivative
$$\frac{d\mu_{\ep}}{dP_{\ep}}=\frac{1}{P_{\ep}(A_{\ep})}\left(\ind_{A_{\ep}^+}-\ind_{A_{\ep}^-}\right).$$
Then, clearly, the Hahn decomposition of $\mu_{\ep}$ is equal to $\Omega=A_{\ep}^+\cup A_{\ep}^-$ and
$$
\| \mu_{{\ep}}\| =|\mu_{\ep}(A_{\ep}^+)|+|\mu_{\ep}(A_{\ep}^-)| =\frac{P_{\ep}(A_{\ep}^+)+P_{\ep}(A_{\ep}^-)}{P_{\ep}(A_{\ep})}=1.$$
Moreover,
\begin{align*}
|l_h(\mu_{\ep})|&=\left |\int h d\mu_{\ep}\right | =\frac1{P_{\ep}(A_{\ep})}\left|\int (h\ind_{A_{\ep}^+}- h\ind_{A_{\ep}^-})dP_{\ep}\right| \\
&=\frac1{P_{\ep}(A_{\ep})}\int |h|\ind_{A_{\ep}}dP_{\ep}>M-\ep.\end{align*}
This implies
$$\|l_h\|_{E'}=\sup_{\substack{\mu\in E\\ \| \mu\|\leq 1}}|l_h(\mu)|\geq \sup_{\ep>0}|l(\mu_{\ep})|\geq M=\|h\|_{\LL},$$
and hence $\|l_h\|_{E'}=\|h\|_{\LL}$.
\end{proof}

Before we start with the proofs of Theorem \ref{HS} and Theorem \ref{dualHS}, we would like to briefly explore and justify the condition $l(\LL) = E'$.
It turns out that this assumption is satisfied (up to an extension of the underlying probability space) for a large class of models, see \cite{klein-koestenberger-robust-duality}.
Moreover, $l(\LL)$ is $\sigma(E',E)$-dense in $E'$ for any convex set of probability measures $\mathcal{P}$. 
In other words, the condition $l(\LL) = E'$ is equivalent to $l(\LL)$ being weak$^*$ closed.

\begin{lemma}\label{lemma:robust-L-space-is-dense} Suppose $l(\mathbb{L}^{\infty})$ is $\sigma(E',E)$ closed. Then $l(\mathbb{L}^{\infty})=E'$.\end{lemma}
\begin{proof}
  The assumption that $l(\mathbb{L}^{\infty})$ is $\sigma(E',E)$ closed implies that  
  $l(\mathbb{L}^{\infty})$ is a weak* closed subspace of the locally convex (with respect to the weak* topology) space $E'$. Suppose there would exist $y'\in E'\setminus
l(\mathbb{L}^{\infty})$. By
Hahn-Banach there exists a weak*-continuous linear functional on $E'$, that is, a $\mu\in E$, such that
\begin{enumerate}
\item $\langle \mu, y' \rangle = y'(\mu) = 1$, and 
\item $\langle \mu, l_{h} \rangle = l_{h}(\mu) = B(\mu,h) = 0$, for all $h\in \LL$. 
\end{enumerate}
Point (ii) implies that $\mu=0$ as $B$ separates the points of $E$. This is a contradiction to (i). 
Hence, $y'\in l(\mathbb{L}^{\infty})$ and $E'=l(\mathbb{L}^{\infty})$.
\end{proof}

\begin{lemma}\label{contleftinv}
If $l(\LL) = E'$, then the map $l: (\LL, \sigma(\LL,E)) \to (E', \sigma(E',E))$ has a continuous inverse $l^{*}:(E',\sigma(E',E)) \to (\LL,\sigma(\LL,E))$.
\end{lemma}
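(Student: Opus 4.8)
The plan is to observe that $\sigma(\LL,E)$ on $\LL$ and (the restriction to $l(\LL)$ of) the weak$^*$ topology $\sigma(E',E)$ are generated by the \emph{same} family of seminorms once the latter is transported along $l$; consequently $l$ is not merely injective but a homeomorphism onto its image, and the set-theoretic inverse $l^{*}:=l^{-1}$ on $l(\LL)$ is automatically continuous. So the work is essentially bookkeeping with the defining seminorms, the only genuine input being the isometry of Lemma~\ref{iso_embed}.

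First I would record injectivity. By Lemma~\ref{iso_embed} we have $\|l_Y\|_{E'}=\|Y\|_{\LL}$, so $l$ is isometric, hence injective, and thus $l\colon\LL\to l(\LL)$ is a bijection. Define $l^{*}\colon l(\LL)\to\LL$ to be its inverse. Since $l(\LL)\subseteq E'$ (Lemma~\ref{iso_embed}), we may equip $l(\LL)$ with the subspace topology induced by $\sigma(E',E)$. Trivially $l^{*}\circ l=\id_{\LL}$, so $l^{*}$ is a left inverse of $l$ in the sense of the statement, and it only remains to check continuity of $l^{*}$.

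For continuity I would argue directly at the level of the generating seminorms. Recall from the construction in Section~\ref{secHS} that $\sigma(\LL,E)$ is generated by the seminorms $p_{\mu}\colon Y\mapsto|B(\mu,Y)|$, $\mu\in E$, while $\sigma(E',E)$ is generated by the seminorms $q_{\mu}\colon y\mapsto|y(\mu)|$, $\mu\in E$. For $y\in l(\LL)$ write $y=l_Y$ with $Y=l^{*}(y)$; then
\[
p_{\mu}\bigl(l^{*}(y)\bigr)=|B(\mu,Y)|=|l_Y(\mu)|=|y(\mu)|=q_{\mu}(y),
\]
so every generating seminorm of the target topology on $\LL$, precomposed with $l^{*}$, coincides with the restriction to $l(\LL)$ of a generating seminorm of $\sigma(E',E)$. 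This is exactly the criterion for $l^{*}\colon (l(\LL),\sigma(E',E))\to(\LL,\sigma(\LL,E))$ to be continuous. (Equivalently, in net terms: $l_{Y_{\alpha}}\to l_Y$ in $\sigma(E',E)$ iff $l_{Y_{\alpha}}(\mu)\to l_Y(\mu)$ for all $\mu\in E$ iff $B(\mu,Y_{\alpha})\to B(\mu,Y)$ for all $\mu\in E$ iff $Y_{\alpha}\to Y$ in $\sigma(\LL,E)$.)

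I do not anticipate a real obstacle: the statement unwinds to the definitions, with the isometry of Lemma~\ref{iso_embed} guaranteeing that $l^{*}$ is well defined as a function on $l(\LL)$. The only points demanding a little care are to keep track of which topology sits on which space — in particular that $l^{*}$ is asserted continuous only on the subspace $l(\LL)$ of $E'$, not on all of $E'$ — and to note that the displayed identity simultaneously shows $l$ itself is continuous, so that $l$ is in fact a homeomorphism of $(\LL,\sigma(\LL,E))$ onto $(l(\LL),\sigma(E',E))$; this stronger fact may be worth recording for use in the proof of Theorem~\ref{HS}.
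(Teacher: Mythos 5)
Your proof is correct and follows essentially the same route as the paper's: both reduce the claim to the identity $l_Y(\mu)=B(\mu,Y)$, which shows that the generating (semi)norms, equivalently the basic neighborhoods, of $\sigma(\LL,E)$ and of the restriction of $\sigma(E',E)$ to $l(\LL)$ correspond under $l$. The only cosmetic difference is that the paper phrases this by computing $(l^{*})^{-1}(U)=l(U)$ for basic neighborhoods $U$ of $0$, whereas you phrase it via the equality of seminorms $p_{\mu}\circ l^{*}=q_{\mu}|_{l(\LL)}$.
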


\begin{proof}
The map $l$ is injective, since its an isometry, and surjective by assumption. 
Thus $l$ has an inverse $l^{*}: E' \to \LL$. 
By the definition of the topology $\sigma(\LL,E)$  sets of the form $U = \{h \in \LL: |B(\mu_{i},h)| <\varepsilon\text{, for all $i=1,\dots,n$}\}$ form a basis for the neighborhoods of $0$ in $\LL$.
Since $l^*$ is linear, it is enough to show that $(l^{*})^{-1}(U)$ is $\sigma(E',E)$-open for such $U$, and since the preimage of the inverse of a bijective function is the image of the original function, we have $(l^{*})^{-1}(U) = l(U)$.
However, if $y\in E'$ is of the form $y=l(h)=l_h$ for some $h\in \LL$, then the bilinear form  $\langle \mu,y \rangle =y(\mu)$ of the dual pair $(E,E')$ becomes
$y(\mu)=l_h(\mu)=B(\mu,h)$.
Since $l(\LL) = E'$, we have
\begin{align*}
  (l^*)^{-1}(U) &= l(U) =  \{l(h) : h\in \mathbb{L}^\infty, |B(\mu_{i},h)| <\varepsilon, i=1,\dots,n\} \\
  & = \{y\in E':  |y(\mu_{i})|<\varepsilon, i=1,\dots,n\}\cap l(\LL)\\
  & = \{y\in E':  |y(\mu_{i})|<\varepsilon, i=1,\dots,n\},
\end{align*}
which is an open set in the $\sigma(E',E)$ topology on $E'$. 
\end{proof}

  \begin{lemma}\label{lemma:0<h<1}
Let $P$ be a probability measure on $(\Omega,\mathcal{F})$, and $h: \Omega\to \mathbb{R}$ measurable. Then $0 \leq h \leq 1$ $P$-a.s. if and only if
\begin{equation}\label{eq:intAh>0}
  0 \leq \int_{\Omega}h dQ \leq 1
\end{equation}
for all probability measures $Q$ on $(\Omega,\mathcal{F})$ with $Q\ll P$.
\end{lemma}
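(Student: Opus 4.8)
The plan is to prove the two implications separately: the \emph{only if} direction is immediate, and for the \emph{if} direction I would argue by contraposition, the only subtlety being an integrability technicality.

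For the \emph{only if} direction, assume $0 \le h \le 1$ holds $P$-a.s. For any probability measure $Q$ with $Q \ll P$, the $P$-null set $\{h<0\}\cup\{h>1\}$ is then also $Q$-null, so $0 \le h \le 1$ holds $Q$-a.s.; in particular $h$ is bounded $Q$-a.s., hence $Q$-integrable, and integrating the two inequalities against the probability measure $Q$ gives $0 = \int_\Omega 0\, dQ \le \int_\Omega h\, dQ \le \int_\Omega 1\, dQ = 1$, which is (\ref{eq:intAh>0}).

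For the \emph{if} direction I would argue by contraposition. Suppose it is not true that $0 \le h \le 1$ $P$-a.s., so that $P(h<0)>0$ or $P(h>1)>0$. Consider first the case $P(h<0)>0$. Since $h$ is real-valued, $\{h<0\} = \bigcup_{n,m\ge 1}\{-m \le h \le -1/n\}$, so by countable subadditivity there exist $n,m$ with $P(A)>0$, where $A := \{-m \le h \le -1/n\}$. I would then test (\ref{eq:intAh>0}) against the conditional probability measure $Q := P(\,\cdot\,\cap A)/P(A)$, which satisfies $Q \ll P$; since $h$ is bounded on $A$ and $Q(A)=1$, $h$ is $Q$-integrable and $\int_\Omega h\, dQ = \frac{1}{P(A)}\int_A h\, dP \le -\frac{1}{n} < 0$, contradicting the lower bound in (\ref{eq:intAh>0}). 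The case $P(h>1)>0$ is symmetric: choose $n,m$ with $P(B)>0$ for $B := \{1+1/n \le h \le m\}$, set $Q := P(\,\cdot\,\cap B)/P(B)$, and obtain $\int_\Omega h\, dQ \ge 1 + \frac{1}{n} > 1$, contradicting the upper bound.

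The main (and essentially only) obstacle is that $h$ is merely assumed measurable, not integrable, so one cannot a priori integrate it against an arbitrary $Q \ll P$. This is exactly why, in the \emph{if} direction, I intersect with the truncation sets $\{-m \le h \le -1/n\}$ and $\{1+1/n \le h \le m\}$, which forces the test measure $Q$ to turn $h$ into a bounded function; in the \emph{only if} direction the same issue is handled by first observing that the hypothesis already makes $h$ essentially bounded with respect to every $Q \ll P$. Everything else is routine.
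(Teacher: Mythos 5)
Your proposal is correct and follows essentially the same route as the paper: both directions are handled identically, and the \emph{if} direction is the same conditioning argument, testing \eqref{eq:intAh>0} against $Q = P(\cdot \cap A)/P(A)$ for a set $A$ on which $h$ is bounded away from $[0,1]$. Your only deviation is the extra truncation $\{-m \le h\}$ (resp.\ $\{h \le m\}$) to guarantee integrability; the paper simply uses $A_n = \{h \le -1/n\}$, on which the integral is well-defined in $[-\infty,0)$ and already contradicts the lower bound, so both versions work.
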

\begin{proof}
Clearly, if $0 \leq h \leq 1$ $P$-a.s., then also $0 \leq h\leq 1$ $Q$-a.s for every $Q\ll P$, and hence
\begin{equation*}
  0\leq \int_{\Omega}h dQ \leq 1.
\end{equation*}
Now, assume that $h$ satisfies \eqref{eq:intAh>0}.
We are only going to show that $h\geq 0$ $P$-a.s, as $h \leq 1$ $P$-a.s. can be shown in the same manner.
Note that $\{h<0\} = \cup_{n\geq 1}A_{n}$, with $A_{n} = \{h \leq -1/n\} \subseteq A_{n+1}$.
If there is an $n$, such that $P(A_{n}) >0$, then the probability measure $Q(F) = P(F\cap A_{n})/P(A_{n})$ satisfies
\begin{equation*}
  \int_{\Omega}h d Q = \frac{1}{P(A_{n})}\int_{A_{n}}h d P \leq  -n^{-1}\frac{P(A_{n})}{P(A_{n})} < 0,
\end{equation*}
which contradicts \eqref{eq:intAh>0}. Hence $P(A_{n}) = 0$ for all $n$, and $P(h<0) = 0$.
\end{proof}

\begin{lemma}\label{lemma:D-s(L,E)-clsd}
$D^{\varepsilon,P}$ and $\widetilde{D}^{\varepsilon \delta, P}$  are $\sigma(\LL,E)$ closed.
\end{lemma}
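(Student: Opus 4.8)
The plan is to realize each of the two sets as a finite intersection of $\sigma(\LL,E)$-closed sets. Recall that $\sigma(\LL,E)$ is, by construction, the coarsest topology on $\LL$ making every linear functional $B(\mu,\cdot)\colon Y\mapsto\int_\Omega Y\,d\mu$, $\mu\in E$, continuous — it is generated by the seminorms $Y\mapsto|B(\mu,Y)|$, $\mu\in E$. This continuity is essentially all that is needed.

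First I would dispose of the expectation constraints. Since $P\in\mathcal{P}$ and trivially $P\ll P$, we have $P\in E$, so $h\mapsto E_P[h]=B(P,h)$ is $\sigma(\LL,E)$-continuous. Hence
\begin{align*}
\{h\in\LL\colon E_P[h]\geq 2\ep\} &= B(P,\cdot)^{-1}\bigl([2\ep,\infty)\bigr)\quad\text{and}\\
\{h\in\LL\colon E_P[h]\leq \ep\delta\} &= B(P,\cdot)^{-1}\bigl((-\infty,\ep\delta]\bigr)
\end{align*}
are $\sigma(\LL,E)$-closed.

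The remaining ingredient, common to $D^{\ep,P}$ and $\widetilde{D}^{\ep\delta,P}$, is the order constraint $C:=\{h\in\LL\colon 0\leq h\leq 1\ \mathcal{P}\text{-q.s.}\}$. Here I would observe that, for a measurable representative of $h$, the set $\{h<0\}\cup\{h>1\}\in\mathcal{F}$ is a polar set exactly when $0\leq h\leq 1$ holds $\mathcal{P}$-q.s.; hence $h\in C$ if and only if $0\leq h\leq 1$ holds $P'$-a.s.\ for \emph{every} $P'\in\mathcal{P}$. Applying Lemma~\ref{lemma:0<h<1} with the measure $P'$, for each $P'\in\mathcal{P}$, and noting that the probability measures absolutely continuous with respect to some $P'\in\mathcal{P}$ are precisely the probability measures lying in $E$, one obtains
\[
  C=\bigcap_{Q}\bigl\{h\in\LL\colon 0\leq B(Q,h)\leq 1\bigr\},
\]
where the intersection runs over all probability measures $Q\in E$. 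Each member of this intersection is the preimage of the closed interval $[0,1]$ under the $\sigma(\LL,E)$-continuous functional $B(Q,\cdot)$, hence $\sigma(\LL,E)$-closed, so $C$ is $\sigma(\LL,E)$-closed as an intersection of closed sets.

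Finally, $D^{\ep,P}=C\cap\{h\colon E_P[h]\geq 2\ep\}$ and $\widetilde{D}^{\ep\delta,P}=C\cap\{h\colon E_P[h]\leq\ep\delta\}$ are intersections of $\sigma(\LL,E)$-closed sets, hence closed, which is the claim. The only step requiring genuine care is the identity for $C$: one must check that testing the two-sided bound against all probabilities in $E$ captures precisely the quasi-sure bound, which is where Lemma~\ref{lemma:0<h<1}, together with the reduction of a $\mathcal{P}$-quasi-sure property to a simultaneous family of $P'$-almost-sure properties, enters; everything else is routine continuity bookkeeping.
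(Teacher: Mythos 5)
Your proof is correct and follows essentially the same route as the paper: both decompose each set into the order-constraint part and the expectation-constraint part, handle the latter via $\sigma(\LL,E)$-continuity of $B(P,\cdot)$, and express the order constraint as an intersection of preimages $B(Q,\cdot)^{-1}([0,1])$ over probability measures $Q\in E$ using Lemma~\ref{lemma:0<h<1} together with the measurability of the exceptional set. No gaps.
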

\begin{proof}
  Note that we can write $D^{\varepsilon,P}$ and $\widetilde{D}^{\varepsilon \delta, P}$ as $D^{\varepsilon,P} = A \cap B$, and $\widetilde{D}^{\varepsilon \delta, P} = A \cap C$ where 
  \begin{equation*}
    \begin{split}
      A &= \{h\in \LL: 0 \leq h \leq 1 \, \mathcal{P}\text{-q.s.}\},\\
      B & = \{h \in \LL: B(P,h) \geq 2 \varepsilon\},\quad \text{and}\\
      C & = \{h \in \LL : B(P,h)  \leq \varepsilon \delta\}.
    \end{split}
  \end{equation*}
  The map $\Phi_{P}: \LL \to \RR, \Phi_{P}(h) = B(P,h)$ is $\sigma(\LL,E)$ continuous, and hence $B = \Phi_{P}^{-1}([2 \varepsilon, \infty))$ and $C = \Phi_{P}^{-1}((-\infty, \varepsilon \delta])$ are $\sigma(\LL, E)$ closed.

    Let us write $\mathcal{M}_{P}$ for the set of probability measures on $(\Omega,\mathcal{F})$ which are absolutely continuous with respect to $P$. 
    Note that the polar set $h^{-1}([0,1]^{c})$ for the property defining $A$ is measurable.
  The measurability of the polar set, and Lemma \ref{lemma:0<h<1} implies that $A$ can be written as
  \begin{equation*}
    \begin{split}
      A &= \{h \in \LL : \forall P \in \mathcal{P}: 0 \leq h \leq 1 \, P\text{-a.s.}\}\\
        & = \bigcap_{P\in \mathcal{P}}\bigcap_{Q\in \mathcal{M}_{P}} \{h \in \LL: 0 \leq  B(Q,h) \leq 1\} \\
        &= \bigcap_{P\in \mathcal{P}}\bigcap_{Q\in \mathcal{M}_{P}} \Phi_{Q}^{-1}([0,1]).
    \end{split}
  \end{equation*}
   Thus $A$ is a $\sigma(\LL,E)$ closed set as an intersection of $\sigma(\LL,E)$ closed sets, and so are $D^{\varepsilon,P}$ and $\widetilde{D}^{\varepsilon \delta, P}$.
\end{proof}

For a subset $D\subset \LL$ define $l(D)=\{l(h): h\in D\}\subset l(\LL)$.

\begin{lemma}\label{weak*compactness_2}
The sets $l(D^{\varepsilon,P})$ and $l(\widetilde{D}^{\varepsilon \delta, P})$ are convex, and if $l(\LL) = E'$ then both sets are $\sigma(E',E)$ compact.
\end{lemma}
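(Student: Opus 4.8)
**Proof plan for Lemma~\ref{weak*compactness_2}.**

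The plan is to obtain compactness via the Banach--Alaoglu theorem applied to the unit ball of $E'$, together with the fact that the relevant sets are $\sigma(E',E)$-closed subsets of that ball. First I would record convexity: $D^{\varepsilon,P}$ and $\widetilde{D}^{\varepsilon\delta,P}$ are convex in $\LL$ (as already observed after Definition~\ref{the_sets_D}), and $l$ is linear, so $l(D^{\varepsilon,P})$ and $l(\widetilde{D}^{\varepsilon\delta,P})$ are convex in $E'$. For compactness, note that every $h$ in either set satisfies $\|h\|_{\LL}\le 1$, so by the isometry property of Lemma~\ref{iso_embed} we have $\|l_h\|_{E'}=\|h\|_{\LL}\le 1$; hence $l(D^{\varepsilon,P})$ and $l(\widetilde{D}^{\varepsilon\delta,P})$ are contained in the closed unit ball $B_{E'}$ of $E'$, which is $\sigma(E',E)$-compact by Banach--Alaoglu (here $E$ is a normed space by Lemma~\ref{lemma:E-is-normed-space}, which is all Banach--Alaoglu needs).

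It then remains to show that $l(D^{\varepsilon,P})$ and $l(\widetilde{D}^{\varepsilon\delta,P})$ are $\sigma(E',E)$-closed, since a closed subset of a compact set is compact. This is where Lemma~\ref{contleftinv} enters. By Lemma~\ref{lemma:D-s(L,E)-clsd}, $D^{\varepsilon,P}$ and $\widetilde{D}^{\varepsilon\delta,P}$ are $\sigma(\LL,E)$-closed in $\LL$. Since $l:(\LL,\sigma(\LL,E))\to(E',\sigma(E',E))$ is injective with a $\sigma(E',E)$-to-$\sigma(\LL,E)$ continuous left inverse $l^{*}$ on $l(\LL)$, the image $l(D)$ of a $\sigma(\LL,E)$-closed set $D$ is $\sigma(E',E)$-closed \emph{in the subspace} $l(\LL)$, namely $l(D)=(l^{*})^{-1}(D)\cap l(\LL)$. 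The subtlety, and the one point that needs care, is passing from closedness in the subspace $l(\LL)$ to closedness in all of $E'$: in general a relatively closed subset of a non-closed subspace need not be closed in the ambient space. The clean way around this is to intersect with $B_{E'}$ before arguing: since $l$ is an isometry, $l(D^{\varepsilon,P})\subseteq B_{E'}$ and similarly for the tilde set, so it suffices to show these sets are closed in $B_{E'}$ with its (compact, hence) relative $\sigma(E',E)$-topology. On $B_{E'}$ we have $l(D^{\varepsilon,P})=(l^{*})^{-1}(D^{\varepsilon,P})\cap l(\LL)\cap B_{E'}$, and one checks that $l(\LL)\cap B_{E'}=l(B_{\LL})$ is itself $\sigma(E',E)$-closed (indeed compact, being the continuous image under $l$ of the $\sigma(\LL,E)$-compact ball $B_{\LL}$ — or directly: it is the image of a Banach--Alaoglu-compact ball under the inclusion, which is $\sigma(\LL,E)$-$\sigma(E',E)$ continuous). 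Combined with continuity of $l^{*}$ this yields that $l(D^{\varepsilon,P})$ is a closed subset of the compact set $B_{E'}$, hence compact; the argument for $l(\widetilde{D}^{\varepsilon\delta,P})$ is identical.

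The main obstacle, then, is not any single hard estimate but precisely this topological bookkeeping: one must be careful that $l(\LL)$ is generally a proper, non-$\sigma(E',E)$-closed subspace of $E'$, so "closed in $l(\LL)$" does not immediately give "closed in $E'$", and the resolution is to work inside the Banach--Alaoglu ball $B_{E'}$ from the start, exploiting that $l$ is an \emph{isometry} (Lemma~\ref{iso_embed}) so that all the sets in question are norm-bounded by $1$. Everything else — convexity, the bound $\|l_h\|_{E'}\le 1$, and the continuity of $l^{*}$ — is supplied directly by the preceding lemmas.
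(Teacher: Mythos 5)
Your overall route is the same as the paper's: convexity from linearity of $l$, the isometry of Lemma~\ref{iso_embed} to place the images inside the Banach--Alaoglu-compact ball $B_{E'}$, and closedness via Lemma~\ref{lemma:D-s(L,E)-clsd} together with the continuous left inverse $l^{*}$ of Lemma~\ref{contleftinv}. You are also right that the delicate point is the passage from ``closed in the subspace $l(\LL)$'' to ``closed in $E'$''; the paper handles this by asserting that $l(\LL)=(l^{*})^{-1}(\LL)$ is itself $\sigma(E',E)$-closed, whereas you propose to work inside $B_{E'}$ and show that $l(\LL)\cap B_{E'}=l(B_{\LL})$ is $\sigma(E',E)$-closed, where $B_{\LL}$ denotes the closed unit ball of $\LL$.

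The patch you propose for this point does not go through as written. You justify the closedness of $l(B_{\LL})$ by claiming that $B_{\LL}$ is $\sigma(\LL,E)$-compact ``by Banach--Alaoglu''. Banach--Alaoglu gives compactness of the unit ball of the \emph{dual} $E'$ in $\sigma(E',E)$; it says nothing about the unit ball of $\LL$ in $\sigma(\LL,E)$, because $\LL$ is not exhibited as the dual of $E$ --- the lemmas only give $l(\LL)\subseteq E'$, and in the robust setting this inclusion may be strict. In fact, given Alaoglu for $B_{E'}$, the isometry, and the continuity of $l$ and $l^{*}$, the $\sigma(\LL,E)$-compactness of $B_{\LL}$ is \emph{equivalent} to the $\sigma(E',E)$-closedness of $l(B_{\LL})$, i.e., to the very statement you invoke it to prove; the step is circular. (In the non-robust case $\cP=\{P\}$ there is no issue: then $E\cong L^{1}(P)$, $l(\LL)=E'$, and your appeal to Alaoglu is literally correct.) To close your argument you would need an independent proof that a $\sigma(E',E)$-limit of functionals $l_{Y_{\alpha}}$ with $\|Y_{\alpha}\|_{\LL}\le 1$ is again of the form $l_{Y}$ for some measurable $Y$ bounded $\cP$-q.s. --- a gluing statement across the (possibly mutually singular) measures in $\cP$ that does not follow from the cited lemmas.
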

\begin{proof}
  Both $D^{\varepsilon,P}$ and $\widetilde{D}^{\varepsilon \delta, P}$ are convex, and since $l$ is linear, so are their images. 
  The set $D^{\varepsilon,P}$ is $\sigma(\LL, E)$ closed by Lemma \ref{lemma:D-s(L,E)-clsd}. 
  Since $l^*$ is continuous by Lemma \ref{contleftinv}, $l(D^{\varepsilon,P})= (l^{*})^{-1}(D^{\varepsilon,P})$ is $\sigma(E',E)$ closed as the continuous preimage of a $\sigma(\LL,E)$ closed set.
  Let $B_{E'} = \{x' \in E': \|x'\|_{E'} \leq 1\}$ denote the closed unit ball in $E'$. 
  $B_{E'}$ is a $\sigma(E',E)$ compact subset of $E'$ by the Banach-Alaoglu Theorem (see \cite{conway:1990:a-course-in-functional-analysis}).
  Since $l$ is an isometry, any $h\in D^{\varepsilon,P}$ satisfies
  \begin{equation*}
    \|l(h)\|_{E'} = \|h\|_{\LL} \leq 1,
  \end{equation*}
  i.e., $l(D^{\varepsilon,P})\subseteq B_{E'}$.
Since $l(D^{\varepsilon,P})$ is $\sigma(E',E)$ closed, this implies that $l(D^{\varepsilon,P})$ is $\sigma(E',E)$ compact.
  By the same argument, $l(\widetilde{D}^{\varepsilon \delta, P})$ is $\sigma(E',E)$ compact.
\end{proof}

\begin{lemma}\label{basicLem}
Let $\mathcal{P}$ and $\mathcal{Q}$ be convex sets of probability measures on $(\Omega,\mathcal{F})$ such that $\mathcal{Q}\lll\mathcal{P}$.
\begin{enumerate}
\item Suppose that the assumptions of Theorem~\ref{HS} are satisfied. Then 
\begin{equation}\label{minmax}
\newinf_{h\in D^{\ep, P}}\sup_{Q\in \mathcal{Q}}E_Q[h]\geq\ep\delta.
\end{equation}
\item  Suppose that the assumptions of Theorem~\ref{dualHS} are satisfied. Then \begin{equation}\label{minmax_dual}
\sup_{h\in \widetilde{D}^{\ep\delta, P}}\newinf_{Q\in \mathcal{Q}}E_Q[h]\leq(2-\ep)\ep.\end{equation}
\end{enumerate}
\end{lemma}

\begin{proof}
Fix $P\in\mathcal{P}$. For the proof of (i), observe that for
$h\in D^{\ep, P}$ we have that

$$2\ep\leq E_P[h]\leq \ep P(h<\ep) + P(h\geq\ep)=(1-\ep)P(h\geq\ep)+\ep.$$

Hence, $P(h\geq\ep)\geq\frac{\ep}{1-\ep}$. Therefore, $A:=\{h\geq\ep\}$ satisfies the condition of Theorem~\ref{HS} as $P(A)\geq\ep$. By assumption, there exists $Q\in\mathcal{Q}$ such that $Q(A)\geq\delta$.
For this $Q$ there exists some $P'\in\mathcal{P}$ such that $Q\ll P'$. As $0\leq h\leq 1$ $\mathcal{P}$-q.s. and therefore $P'(0\leq h\leq 1)=1$ we obtain that $Q(0\leq h\leq1 )=1$.  We conclude that
$$\sup_{Q'\in\mathcal{Q}}E_{Q'}[h]\geq E_Q[h]\geq\ep Q(A)\geq\ep\delta.$$ This can be done for every $h\in D^{\ep, P}$ and so (\ref{minmax}) follows.

For the proof of (ii), observe that for
$h\in \widetilde{D}^{\ep\delta, P}$ we have that
$$\ep\delta\geq E_P[h]> \ep P(h>\ep),$$
hence $P(h>\ep)<\delta$. By assumption, there exists $Q\in\mathcal{Q}$ with $Q(h>\ep)<\ep$. And so, again as $0\leq h\leq 1$ $\mathcal{P}$-q.s. and $Q\ll P'$ for some $P'\in\mathcal{P}$ we get
$$\inf_{Q\in \mathcal{Q}}E_Q[h]\leq E_Q[h]\leq\ep Q(h\leq \ep)+Q(h>\ep)< \ep +(1-\ep)\ep=(2-\ep)\ep.$$
This can be done for every $h\in \widetilde{D}^{\ep\delta,P}$ and so (\ref{minmax_dual}) follows.
\end{proof}

\begin{proof}[Proof of Theorem~\ref{HS}]\label{proof:HS}
In Lemma~\ref{basicLem}  we already saw that the assumptions of Theorem~\ref{HS} imply
\begin{equation}\label{infsup}\newinf_{h\in D^{\ep, P}}\sup_{Q\in \mathcal{Q}}E_Q[h]\geq\ep\delta.\end{equation}
 We can write (\ref{infsup}) as
\begin{equation}\label{infsup2}\newinf_{y\in l(D^{\ep,P})}\sup_{Q\in \mathcal{Q}}\langle Q,y\rangle\geq\ep\delta,\end{equation}
where $\langle \cdot,\cdot\rangle$ is the bilinear form of the dual pair $(E,E')$.
Note that as $y\in l(D^{\ep,P})$ satisfies $y=l_h$ for $h\in D^{\ep,P}$, $\langle Q,y\rangle = l_h(Q)=B(Q,h)=E_Q[h]$ for $y=l_h$.  
By definition of the topology $\sigma(E',E)$, $y\mapsto \langle \mu, y\rangle$ is $\sigma(E',E)$ continuous, for all $\mu\in E$, and as $\mathcal{Q}\lll \mathcal{P}$, obviously $\mathcal{Q}\subseteq E$ hence the continuity holds for all $Q\in\mathcal{Q}$. 
By $\sigma(E',E)$-compactness of the convex set $l(D^{\ep,P})$ (see Lemma~\ref{weak*compactness_2}), Sion's Minimax Theorem applied to  (\ref{infsup2}), which is the same as (\ref{infsup}), now gives
\begin{align}\label{supinf}
\sup_{Q\in \mathcal{Q}}\newinf_{y\in l(D^{\ep,P})}\langle Q,y\rangle = \sup_{Q\in \mathcal{Q}}\newinf_{h\in D^{\ep, P}}E_Q[h] \geq \ep\delta.
\end{align}
As (\ref{supinf})  holds for every fixed $P\in\mathcal{P}$, for each such $P$, by the above inequality, we can choose $Q\in\mathcal{Q}$ such that
$$\inf_{h\in D^{\ep, P}}E_Q[h]\geq\frac{\ep\delta}2.$$
For any $A$ with $P(A)\geq 2\ep$ we have that $h=\ind_A\in D^{\ep, P}$. Therefore

$$Q(A)\geq \inf_{h\in  D^{\ep, P}}E_Q[h]\geq\frac{\ep\delta}{2}.$$
\end{proof}

\begin{proof}[Proof of Theorem~\ref{dualHS}]
  We first note that the version of Sion's Minimax Theorem \ref{thm:sion} cited above does not apply to this setting. 
  However, the assumptions of this theorem are more than satisfied. 
  The function $f(Q,h) = \mathbb{E}_{Q}( h )$ is bi-linear, and hence convex and concave in both its argument.
  Additionally, $f$ is continuous in both its arguments. 
    Hence, the function $-f(x,y)$ satisfies the conditions of Theorem \ref{thm:sion}.
  Now, recall that for an arbitrary family $W \subseteq \mathbb{R}$, we have
  \begin{equation*}
    \inf_{w\in W} w = - \sup_{w\in W} (-w).
  \end{equation*}
  Combining this idea with Theorem \ref{thm:sion}, and the fact that $l(\widetilde{D}^{\varepsilon,P})$ is $\sigma(E',E)$ compact (see Lemma \ref{weak*compactness_2}) yields
  \begin{equation*}
    \begin{split}
    \inf_{Q\in \mathcal{Q}} \sup_{h\in \widetilde{D}^{\varepsilon,P}} \mathbb{E}_{Q}( h ) & = - \sup_{Q\in \mathcal{Q}}\inf_{h\in l(\widetilde{D}^{\varepsilon,P})} - \mathbb{E}_{Q}( h )\\
                                                                                          &= - \inf_{h\in l(\widetilde{D}^{\varepsilon,P})}\sup_{Q\in \mathcal{Q}} - \mathbb{E}_{Q}( h )\\
                                                                                          &=  \sup_{h\in \widetilde{D}^{\varepsilon,P}} \inf_{Q\in \mathcal{Q}} \mathbb{E}_{Q}( h ).
    \end{split}
  \end{equation*}

By  an analogous application of Lemma~\ref{basicLem} as in the proof of Theorem~\ref{HS}, we get
$$ \inf_{Q\in \mathcal{Q}}\sup_{h\in  \widetilde{D}^{\ep\delta, P}}E_Q[h]\leq(2-\ep)\ep.$$
Now, choose a $Q\in\mathcal{Q}$ such that $\sup_{h\in  \widetilde{D}^{\ep\delta, P}}E_Q[h]<2\ep$.
For any $A$ with $P(A)<\ep\delta$ we have that $h=\ind_A\in\widetilde{D}^{\ep\delta, P}$, and hence
$$Q(A)\leq \sup_{h\in  \widetilde{D}^{\ep\delta, P}}E_Q[h]<2\ep.$$
\end{proof}

\section{Robust large financial markets in discrete time}
\label{sec:RLFM}

A robust large financial market, i.e., a large financial market under model uncertainty, is here constructed similarly to the case without uncertainty through a \emph{sequence} of robust financial markets (which we will call small robust markets).
For these small robust  markets, we choose a discrete time one-period setting as in \cite{BN:15}.
As we have to assume that $l(\LL)=E'$ in the quantitative Halmos-Savage Theorems we need to ensure that this condition is satisfied in the small robust markets. 
A large class of models admits an extension on which $l(\LL) = E'$, see \cite{klein-koestenberger-robust-duality}.
In particular, this condition is satisfied in the robust binomial model of \cite{blanch_carassus} for the  one-period case, and in a Black-Scholes model with uncertain constant volatility (bounded away from $0$) and uncertain constant drift.
For an extensive analysis of the large financial market based on robust one-period binomial models, we refer to Section \ref{App_robust_bin}.

A robust large financial market can also be seen as a model of a robust financial market including infinitely many assets, by choosing the dimension $d(n)$ of the small robust markets such that $d(n)\to\infty$, for $n\to\infty$. 
Let us now give the details of our model of a large financial market.
For each $n\ge1$, the robust small market $n$ is defined as follows. Let $(\Omega^n,\mathcal{F}^n)$ be a  measure space where the filtration is given by $\mathcal{F}^n_0=\{\emptyset,\Omega\}$ and $\mathcal{F}^n_1=\mathcal{F}^n$. Let 
$\mathcal{P}^n$ be a convex set of probability measures on $(\Omega^n,\mathcal{F}^n)$ such that Assumption~\ref{closed} below holds. 
\begin{assumption}\label{closed}
$\mathcal{P}^n$ satisfies $l(\LL(\mathcal{P}^n))=E_n'$ for all $n\geq1$, where $E_n$ and $E_n'$ for $\mathcal{P}^n$ be defined as in Section~\ref{secHS}.  
\end{assumption}
The trading objects are $d(n)$ stocks 
$S^n_t=(S^{n,1}_t,\dots, S^{n,d(n)}_t)$, $t=0,1$. Here $S^n_0$ is a deterministic vector in $\mathbb{R}^{d(n)}$ and $S^n_1$ is an $\mathcal{F}^n$-measurable $\mathbb{R}^{d(n)}$-valued random vector. 
We think of $S_t^n$ as the (discounted) prices of the $d(n)$ stocks at time $t$, $t=0,1$.

In this setting a trading strategy $H^n=(H^{n,1},\dots, H^{n,d(n)})$ is a deterministic vector in $\mathbb{R}^{d(n)}$. A portfolio with strategy $H^n$ is given by
\begin{equation}\label{pf_market_n}
X^n_1=H^n(S^n_1-S^n_{0}),
\end{equation}
where $X^n_0=0$ and $H^n(S^n_1-S^n_{0}):=\sum_{k=1}^{d(n)}H^{n,k}(S^{n,k}_1-S^{n,k}_{0})$.
We now give the definition of robust no-arbitrage on the  market $n$ as in \cite{BN:15}:

\begin{definition}\label{na_market_n}
The market $n$  satisfies the condition NA$(\mathcal{P}^n)$ if for all $H^n\in\mathbb{R}^{d(n)}$
$$H^n(S^n_1-S^n_{0})\geq 0\quad\text{$\mathcal{P}^n$-q.s.}\quad\text{implies}\quad H^n(S^n_1-S^n_{0})= 0\quad\text{$\mathcal{P}^n$-q.s.}$$
\end{definition}

Now, we can give the definition of a robust large financial market in discrete time.

\begin{definition}\label{robust_lfm}
A robust large financial market is a sequence of markets $n$ as given above, i.e., a sequence of $(\Omega^n,\mathcal{F}^n,(\mathcal{F}_t^n)_{t=0, 1})$ with a $d(n)$-dimensional stock in one time period $S^n_t=(S^{n,1}_t,\dots, S^{n,d(n)}_t)$, $t=0,1$, where $d(n)$ is the number of stocks in market $n$.
\end{definition}

We will assume that each market $n$ satisfies the robust no-arbitrage condition of Definition~\ref{na_market_n}.

\begin{assumption}\label{na_ass}
For each $n\geq1$, we have that  NA$(\mathcal{P}^n)$ holds.
\end{assumption}

Let us recall the connection to martingale measures in the one-period model of \cite{BN:15}. On market $n$ we define the following set $\mathcal{Q}^n$ of probability measures on $(\Omega^n,\mathcal{F}^n)$. For a random variable $f:\Omega\to[0,\infty]$ and any probability measure $P$ the expected value is defined as 
\begin{equation}\label{convention}
\text E_P[f]=E_P[f^+]-E_P[f^-],
\end{equation}
with the convention $\infty-\infty=-\infty$. We recall from Definition \ref{lll}, that $Q\lll\mathcal{P}$ means 
that there exists $P \in \mathcal{P}$ such that $Q\ll P$.

\begin{definition}\label{martmeas_market_n}
A probability measure $Q^n$ on $(\Omega^n,\mathcal{F}^n)$ is called martingale measure (on market $n$), if
$E_{Q^n}[S^{n,i}_1-S^{n,i}_{0}]=0$, for all $i=1,\dots, d(n)$, using Convention~\eqref{convention}.
Define
$$\mathcal{Q}^n=\{Q^n\lll\mathcal{P}^n: Q^n \text{ martingale measure}\}.$$
\end{definition}

A consequence of Assumption~\ref{na_ass} is that on each small market $n$, the property of the set $\mathcal{Q}^n$ as in (ii) of the following first FTAP holds. Moreover, the superhedging price is given in the Superhedging Theorem below, see \cite{BN:15} for both results.

\begin{theorem}
\label{ftap_n}
The following are equivalent:
\begin{enumerate}
\item NA$(\mathcal{P}^n)$ holds.
\item For all $P^n\in\mathcal{P}^n$ there exists $Q^n\in \mathcal{Q}^n$ such that $P^n\ll Q^n$, i.e., $\mathcal{P}^n\lll \mathcal{Q}^n$.
\end{enumerate}
\end{theorem}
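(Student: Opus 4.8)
The statement is the robust FTAP on a single market $n$ from \cite{BN:15}, so the plan is to reduce it to the known non-robust discrete-time FTAP (Dalang--Morton--Willinger) via a one-period, backward-induction argument exactly as in Bouchard--Nutz. I would not reprove the measurable-selection machinery from scratch; rather I would indicate the structure and point to where the genuine work lies.

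First, the easy implications. For (ii)$\Rightarrow$(iii): if $A$ is $\mathcal{P}^n$-polar, i.e.\ $P^n(A)=0$ for all $P^n\in\mathcal{P}^n$, then for any $Q^n\in\mathcal{Q}^n$ we have $Q^n\ll P^n$ for some $P^n\in\mathcal{P}^n$, so $Q^n(A)=0$; conversely, if $A$ is $\mathcal{Q}^n$-polar, pick for a given $P^n$ the measure $Q^n\in\mathcal{Q}^n$ with $P^n\ll Q^n$ from (ii), and then $Q^n(A)=0$ forces $P^n(A)=0$. For (iii)$\Rightarrow$(i): suppose NA($\mathcal{P}^n$) fails, so there is $H^n\in\mathcal{H}^n$ with $(H^n\cdot S^n)_{T(n)}\geq 0$ $\mathcal{P}^n$-q.s.\ but $P^n\big((H^n\cdot S^n)_{T(n)}>0\big)>0$ for some $P^n\in\mathcal{P}^n$. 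The set $\{(H^n\cdot S^n)_{T(n)}<0\}$ is $\mathcal{P}^n$-polar, hence by (iii) also $\mathcal{Q}^n$-polar, while $\{(H^n\cdot S^n)_{T(n)}>0\}$ is not $\mathcal{P}^n$-polar, hence not $\mathcal{Q}^n$-polar; but under any $Q^n\in\mathcal{Q}^n$ the process $H^n\cdot S^n$ is a martingale (using predictability of $H^n$ and local boundedness/integrability arranged via a suitable equivalent change of measure, or by first localizing), so $E_{Q^n}[(H^n\cdot S^n)_{T(n)}]=0$ while the integrand is $\geq 0$ $Q^n$-a.s.\ and strictly positive on a non-null set --- a contradiction.

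The substantive implication is (i)$\Rightarrow$(ii), and here I would follow \cite{BN:15} verbatim in spirit. Fix $P^n\in\mathcal{P}^n$. One works on the product-space stochastic basis $\Omega^n_t=\Omega^n_0\times(\Omega^n_1)^t$ and builds $Q^n$ one period at a time by selecting, $\mathcal{F}^n_{t-1}$-measurably in $\omega$, a probability kernel $Q_{t}(\omega,\cdot)$ on $\Omega^n_1$ that is equivalent to (a regular conditional version of) $P^n_t(\omega,\cdot)$ and under which the one-step increment $S^n_t(\omega,\cdot)-S^n_{t-1}(\omega)$ has mean zero. The existence of such a kernel at each fixed $\omega$ is the classical one-period no-arbitrage $\Leftrightarrow$ existence of an equivalent martingale measure result; the point is that NA($\mathcal{P}^n$) propagates to a quasi-sure one-period no-arbitrage condition for the conditional problem (this is the delicate step: one must exclude arbitrage $\omega$-wise on a set of full $\mathcal{P}^n$-measure, using that a violation on a non-polar set could be pasted into a global strategy), and that the selection can be performed measurably by the Jankov--von Neumann measurable selection theorem applied to the analytic set of admissible kernels. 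Concatenating the kernels $Q_1,\dots,Q_{T(n)}$ via Fubini yields $Q^n$ on $(\Omega^n,\mathcal{F}^n)$ with $Q^n\sim P^n$ along the relevant $\sigma$-algebras, in particular $P^n\ll Q^n$, and $Q^n$ is by construction a martingale measure with $Q^n\ll P^n$, so $Q^n\in\mathcal{Q}^n$.

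The main obstacle is precisely the measurable-selection and the propagation of no-arbitrage to the conditional one-step problems: one needs the "local" no-arbitrage condition to hold $\mathcal{P}^n$-quasi-surely and one needs the set of equivalent martingale kernels to be a Borel (or analytic) set in $\omega$ so that a universally measurable selector exists. Since this is exactly the content of \cite{BN:15}, I would state the reduction and refer there for the measurable-selection details rather than reproduce them; the remaining arguments (the two easy implications and the martingale contradiction) are then routine given the earlier material.
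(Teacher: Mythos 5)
The paper does not prove this theorem at all: it is imported verbatim from \cite{BN:15} (``See \cite{BN:15} for both results''), so there is no in-paper argument to compare against. Your outline is a faithful reproduction of the Bouchard--Nutz strategy --- the two easy implications, the supermartingale contradiction for (iii)$\Rightarrow$(i), and the one-period reduction with Jankov--von Neumann selection for (i)$\Rightarrow$(ii) --- and deferring the measurable-selection details to the original reference is exactly what the paper itself does. One small correction: at the end of (i)$\Rightarrow$(ii) you assert that the constructed measure satisfies $Q^n\ll P^n$; this is generally false (the martingale measure may have to charge $P^n$-null sets in order to be a martingale measure quasi-surely), and membership in $\mathcal{Q}^n$ only requires $Q^n\lll\mathcal{P}^n$, i.e.\ domination by \emph{some} element of $\mathcal{P}^n$, which is what the kernel construction actually delivers.
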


\begin{theorem}
\label{super_n}
Let NA$(\mathcal{P}^n)$ hold and let $f$ be a random variable. Then
\begin{align*}\sup_{Q^n\in\mathcal{Q}^n}E_{Q^n}[f] =\pi(f),\end{align*}
where $\pi(f):=\inf\{x\in\mathbb{R}: \exists H^n\in\mathbb{R}^{d(n)}\text{ with }x+H^n(S^n_1-S^n_0)\geq f\text{ $\mathcal{P}^n$-q.s.}\}$.
Moreover, if $\pi(f)>-\infty$, then there exists $H^n\in\mathbb{R}^{d(n)}$ such that
$$\pi(f)+H^n(S^n_1-S^n_0)\geq f,\text{ $\mathcal{P}^n$-q.s.}$$

\end{theorem}

\begin{remark}
  In the one-period setting, we can apply the theory from \cite{klein-koestenberger-robust-duality} to guarantee that $l(\LL) = E'$ in the example of the robust binomial model of  \cite{blanch_carassus} for $T=1$.
  This is no longer the case in the robust multi-period binomial model ($T>1$). It is  an open question, whether any (non-trivial) robust multi-period binomial model satisfies $l(\LL) = E'$. 
   Given the assumption that $l(\LL) = E'$, it is conceptually straight-forward, but somewhat lengthy and technical to extend  Theorem~\ref{lfmftap} and Theorem~\ref{lfmftap2} to the multi-period case.   
   However, currently it is not clear that the extension of our results to the multi-period setting is useful,
  as it is unclear if the assumption $l(\LL) = E'$ holds in the robust multi-period binomial model.
  Hence, in the present paper, we restrict our attention to the one-period case. 
\end{remark}

We  now introduce  no asymptotic arbitrage conditions on the large financial market, that is, for a sequence of  small  arbitrage-free markets. We  first consider a variant of {\em no asymptotic arbitrage of first kind} (NAA1), originally introduced in \cite{KK:94}. 
This condition is closely related to the  notion of  {\em no unbounded profit with bounded risk} (NUPBR). 
In \cite{CKT:14} the equivalence of these two conditions for a large financial market in continuous time on a fixed probability space $\Omega$ has been established.
In addition, we will also use a variant of {\em no asymptotic arbitrage of second kind} (NAA2), which also goes back to \cite{KK:94}.

\begin{definition}\label{AA1}
We say that the robust large financial market has an asymptotic arbitrage of first kind (AA1$(\mathcal{P}^n$)) if the following holds: there exists a subsequence of markets $n_k$ and a sequence of portfolios $X^k_1=H^{n_k}(S^{n_k}_1-S^{n_k}_0)$ and a sequence of positive real numbers $c_k\to0$ such that
\begin{enumerate}
\item for all $k\geq 1$,  $X^k_1\geq -c_k$ $\mathcal{P}^{n_k}$-q.s., and
\item  there exists $\alpha>0$ and a sequence $(P^k)_{k\geq1}$  with $P^k\in\mathcal{P}^{n_k}$ such that $$P^k(X^k_{1}\geq \alpha)\geq\alpha,$$ for all $k\geq1$.
\end{enumerate}
We say that {\em no asymptotic arbitrage of the first kind}, NAA1$(\mathcal{P}^n)$, is satisfied if the above does not exist. 
\end{definition}
Note that by modifying the trading strategy, the profit in (ii) above can become unboundedly high in the limit, while the risk still stays vanishing. Indeed, choose $\tilde{H}^{n_k}=\frac{1}{\sqrt{c_k}}H^{n_k}$.

\begin{definition}\label{AA2}
We say that the robust large financial market has an asymptotic arbitrage of the second kind (AA2($\mathcal{P}^n$)) if the following holds: there exists a subsequence of markets $n_k$ and a sequence of portfolios $X^k_1=H^{n_k}(S^{n_k}_1-S^{n_k}_0)$ such that
\begin{enumerate}
\item for all $k\geq 1$,  $X^k_1\geq -1$ $\mathcal{P}^{n_k}$-q.s., and
\item there exists $\alpha>0$ and a sequence $(P^k)_{k\geq1}$  with $P^k\in\mathcal{P}^{n_k}$ such that $$\lim_{k\to\infty}P^k(X^k_{1}\geq \alpha)=1,$$ for all $k\geq1$.
\end{enumerate}
We say that {\em no asymptotic arbitrage of second kind}, NAA2$(\mathcal{P}^n)$, is satisfied if the above does not exist.
\end{definition}

We will now formulate two robust versions of the Fundamental Theorem of Asset Pricing (FTAP) for large financial markets in discrete time. First, we will present a FTAP with the notion NAA1($\mathcal{P}^n)$.  For the convenience of the reader, we give the definition of contiguity for sequences of probability measures which, as usual in the classical setting of large financial markets on a sequence of $(\Omega^n)_{n\geq1}$, replaces the notion of absolute continuity of probability measures. 

\begin{definition}\label{contiguity}
For $n\geq1$ let $P^n$ and $Q^n$ be probability measures on $(\Omega^n,\mathcal{F}^n)$. Then $(P^n)_{n\geq 1}$ is contiguous with respect to $(Q^n)_{n\geq1}$ if for each sequence $A^n\in\mathcal{F}^n$ with $Q^n(A^n)\to0$ for $n\to\infty$ we have that $P^n(A^n)\to0$ for $n\to\infty$. In this case we use the notation $(P^n)_{n\geq1}\triangleleft (Q^n)_{n\geq1}$.
\end{definition}

\begin{theorem}[Robust FTAP for large financial markets with NAA1]\label{lfmftap}
Let a large financial market as above be given such that Assumptions~\ref{closed} and \ref{na_ass} hold. Then NAA1$(\mathcal{P}^n)$ $\Leftrightarrow$ for each sequence $(P^n)_{n\geq1}$ with $P^n\in \mathcal{P}^n$, for all $n$, there exists a sequence
 $(Q^n)_{n\geq1}$ with $Q^n\in \mathcal{Q}^n$, for all $n$, such that $(P^n)_{n\geq1}\triangleleft (Q^n)_{n\geq1}$.
\end{theorem}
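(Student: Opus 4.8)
I would prove the two implications separately. The direction ``$\Leftarrow$'' is elementary and uses only that a bounded-below portfolio is a supermartingale under a martingale measure. The direction ``$\Rightarrow$'' is the substantive one, and it is precisely here that the quantitative Halmos--Savage Theorem~\ref{HS} enters; the scheme is the classical Klein--Schachermayer one: derive from NAA1 a \emph{uniform quantitative} Halmos--Savage hypothesis across the markets, feed it into Theorem~\ref{HS} at every scale, and assemble the resulting martingale measures.

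\textbf{The easy direction.} Assume contiguity holds for every sequence, and suppose, for contradiction, that AA1($\mathcal{P}^n$) holds with witnesses $n_k$, $X^k=(H^{n_k}\cdot S^{n_k})$, $c_k\to 0$, $\alpha>0$ and $P^k\in\mathcal{P}^{n_k}$. I would extend $(P^k)$ arbitrarily to a full sequence $(P^n)$ and pick $(Q^n)$, $Q^n\in\mathcal{Q}^n$, with $(P^n)\triangleleft(Q^n)$. Since $X^k+c_k\ge 0$ $\mathcal{P}^{n_k}$-q.s., hence $Q^{n_k}$-a.s.\ (as $Q^{n_k}\lll\mathcal{P}^{n_k}$), and $X^k$ is a martingale transform of the $Q^{n_k}$-martingale $S^{n_k}$, the process $X^k+c_k$ is a nonnegative $Q^{n_k}$-supermartingale (standard in discrete time), so $E_{Q^{n_k}}[X^k_{T(n_k)}]\le 0$, and Markov's inequality gives $Q^{n_k}(X^k_{T(n_k)}\ge\alpha)\le c_k/\alpha\to 0$. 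Taking $A^n=\{X^n_{T(n)}\ge\alpha\}$ along the subsequence and $A^n=\emptyset$ otherwise, we get $Q^n(A^n)\to 0$ while $P^{n_k}(A^{n_k})\ge\alpha$, contradicting $(P^n)\triangleleft(Q^n)$.

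\textbf{The main direction, step 1: the uniform quantitative estimate.} Assume NAA1($\mathcal{P}^n$). The key claim is: for every $\epsilon>0$ there is $\delta>0$ such that for every $n$ and every $A\in\mathcal{F}^n$ for which some $P\in\mathcal{P}^n$ satisfies $P(A)\ge\epsilon$, there is $Q\in\mathcal{Q}^n$ with $Q(A)\ge\delta$. I would prove this by contradiction: if it fails for some $\epsilon$, then for each $k$ there are $n_k$, $A_k\in\mathcal{F}^{n_k}$ and $P_k\in\mathcal{P}^{n_k}$ with $P_k(A_k)\ge\epsilon$ and $\sup_{Q\in\mathcal{Q}^{n_k}}Q(A_k)\le 1/k$. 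Applying the Superhedging Theorem~\ref{super_n} to the Borel (hence upper semi-analytic) payoff $\mathbf 1_{A_k}$ on market $n_k$, its superhedging price equals $\sup_{Q\in\mathcal{Q}^{n_k}}Q(A_k)\le 1/k$ and is attained by some $H^{n_k}\in\mathcal H^{n_k}$; since $\mathbf 1_{A_k}\ge 0$, the recursive construction of the superhedging strategy keeps the wealth $\tfrac1k+(H^{n_k}\cdot S^{n_k})_t$ nonnegative for \emph{all} $t$ (it dominates the conditional superhedging price of the nonnegative claim). Hence $X^k:=(H^{n_k}\cdot S^{n_k})$ satisfies $X^k_t\ge-\tfrac1k$ $\mathcal{P}^{n_k}$-q.s., and on $A_k$ we have $X^k_{T(n_k)}\ge 1-\tfrac1k$. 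With $c_k=1/k\to 0$, $\alpha:=\min(\epsilon,\tfrac12)$ and $P^k:=P_k$, the portfolios $X^k$ realise (i)--(ii) of Definition~\ref{AA1}; provided the $n_k$ may be chosen pairwise distinct this is an AA1($\mathcal{P}^n$), contradicting NAA1. The remaining possibility --- that a single market $n^\ast$ carries the whole failure --- must be ruled out separately: an individual arbitrage-free market, being finite-dimensional with finite horizon, satisfies a (non-uniform) quantitative Halmos--Savage estimate, so the failure cannot concentrate there. I expect this whole step --- converting superhedging duality into an asymptotic arbitrage, with the running nonnegativity of the superhedge and the exclusion of the single-market case --- to be the main obstacle.

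\textbf{The main direction, step 2: assembling $(Q^n)$.} Fix the given sequence $(P^n)$. Each $\mathcal{Q}^n$ is convex (if $Q_i\ll P_i\in\mathcal{P}^n$ then $\tfrac12(Q_1+Q_2)\ll\tfrac12(P_1+P_2)\in\mathcal{P}^n$) and satisfies $\mathcal{Q}^n\lll\mathcal{P}^n$, so for each $j\ge 1$, with $\delta_j>0$ the constant from step~1 for $\epsilon=1/j$, the hypothesis of Theorem~\ref{HS} holds for $(\mathcal{P}^n,\mathcal{Q}^n)$ with parameters $\epsilon=1/j$, $\delta=\delta_j$; applying Theorem~\ref{HS} to the measure $P^n$ yields $Q^n_j\in\mathcal{Q}^n$ with $P^n(A)\ge 2/j\Rightarrow Q^n_j(A)\ge\tfrac{\delta_j}{2j}=:\kappa_j>0$. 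I would then set $Q^n:=\sum_{j\ge 1}a_jQ^n_j$ with weights $a_j>0$, $\sum_j a_j=1$, chosen summably small so that $S^n$ is $Q^n$-integrable (possible since each market has finitely many dates and stocks); then $Q^n$ is again a martingale measure, and $Q^n\lll\mathcal{P}^n$ because $\mathcal{P}^n$ is stable under countable convex combinations (cf.\ the Remark following Lemma~\ref{lemma:E-is-normed-space}), so $Q^n\in\mathcal{Q}^n$. For $A\in\mathcal{F}^n$ with $P^n(A)\ge 2/j$ we get $Q^n(A)\ge a_j\kappa_j=:\eta_j>0$. Contiguity now follows: if $Q^n(A^n)\to 0$, then for any $\gamma>0$ choose $j$ with $2/j<\gamma$; since eventually $Q^n(A^n)<\eta_j$, the contrapositive of the previous line gives $P^n(A^n)<2/j<\gamma$ for large $n$, hence $P^n(A^n)\to 0$, i.e.\ $(P^n)\triangleleft(Q^n)$, which is the desired sequence.
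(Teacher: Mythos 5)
Your overall architecture is the same as the paper's: the reverse implication via the (super)martingale property of the gains process under a dominated martingale measure and Markov's inequality, and the forward implication via (a) a uniform $\epsilon$--$\delta$ estimate extracted from NAA1 through the Superhedging Theorem (this is exactly the paper's Lemma~\ref{tech_lemma}), (b) Theorem~\ref{HS} applied at the scales $\epsilon_m=1/m$ to the fixed sequence $(P^n)$, and (c) aggregation of the resulting measures $Q^{n,m}$ into a single $Q^n$ followed by the standard contiguity argument. The easy direction and step 1 are essentially the paper's proofs; the issue you flag about whether the markets $n(\delta_k)$ can be taken pairwise distinct is not addressed in the paper either (it simply sets $n_k=n(\delta_k)$ for $\delta_k\downarrow 0$), so you are, if anything, more cautious there than the source, though your proposed resolution via a single-market quantitative Halmos--Savage estimate is asserted rather than proved.

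The genuine problem is in your step 2. You set $Q^n=\sum_{j\geq 1}a_jQ^n_j$, a \emph{countable} convex combination, and justify $Q^n\in\mathcal{Q}^n$ by invoking closedness of $\mathcal{P}^n$ under countable convex combinations, citing the Remark following Lemma~\ref{lemma:E-is-normed-space}. That Remark is conditional (``\emph{if} $\mathcal{P}$ is not only convex but also contains each countable convex combination, \emph{then} $E$ is Banach''); the standing hypothesis of the theorem is only that each $\mathcal{P}^n$ is convex. Hence there is no reason why $\sum_j a_jQ^n_j$ should be dominated by a single element of $\mathcal{P}^n$ (the natural candidate $\sum_j a_jP_j$, with $Q^n_j\ll P_j\in\mathcal{P}^n$, need not lie in $\mathcal{P}^n$), nor does the martingale property pass to an infinite mixture without an additional integrability argument. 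The paper avoids all of this by taking the \emph{finite}, renormalized combination $Q^n=\frac{1}{1-2^{-n}}\sum_{m=1}^{n}2^{-m}Q^{n,m}$, which lies in $\mathcal{Q}^n$ by plain convexity (Lemma~\ref{setQconvex}). Your contiguity argument survives this truncation verbatim: for $n\geq N_0$ the weight of $Q^{n,N_0}$ in $Q^n$ is still at least $2^{-N_0}$, so $P^n(A^n)\geq 2\epsilon_{N_0}$ forces $Q^n(A^n)\geq 2^{-N_0}\epsilon_{N_0}\delta_{N_0}/2$, and the rest of your argument goes through. With that one repair your proof coincides with the paper's.
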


Moreover, for NAA2($\mathcal{P}^n$), we will need a weak version of contiguity that appeared first in \cite{KS:96} in the analogous context and was denoted as \emph{weak contiguity} in \cite{KK:98}. 
We cannot get a stronger property for the sets of equivalent martingale measures, as this already fails in the classical case. A counterexample can be found in \cite{KS:96}.

\begin{definition}\label{weak_contiguity}
For each $n\geq1$ let  $P^n$  be a probability measure on $(\Omega^n,\mathcal{F}^n)$. Moreover, let $\mathcal{R}^n$ be a convex set of probability measures on $(\Omega^n,\mathcal{F}^n)$.
Then the sequence of sets $(\mathcal{R}^n)_{n\geq 1}$ is {\emph  weakly contiguous with respect to} $(P^n)_{n\geq1}$ if for every $\ep>0$ there exists $\delta>0$ and a sequence $(Q^{n,\ep})_{n\geq 1}$ with $Q^{n,\ep}\in\mathcal{R}^n$  such that, for all $n\geq1$ and $A^n\in\mathcal{F}^n$ with $P^n(A^n)<\delta$, we have that $Q^{n,\ep}(A^n)<\ep$. 
In this case, we use the notation $(\mathcal{R}^n)_{n\geq1}\triangleleft_w (P^n)_{n\geq1}$.
\end{definition}

\begin{theorem}[Robust FTAP for large financial markets with  NAA2]\label{lfmftap2}
Let a large financial market as above be given such that the Assumptions~\ref{closed} and \ref{na_ass} hold. Then NAA2$(\mathcal{P}^n)$ $\Leftrightarrow$ for each sequence $(P^n)_{n\geq1}$ with $P^n\in \mathcal{P}^n$, for all $n$, we have that
 $(\mathcal{Q}^n)_{n\geq1}\triangleleft_w(P^n)$.
\end{theorem}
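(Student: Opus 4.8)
\textbf{Proof plan for Theorem~\ref{lfmftap2}.}

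The plan is to mimic the strategy used for Theorem~\ref{lfmftap}, but with the dual quantitative Halmos–Savage theorem (Theorem~\ref{dualHS}) replacing its primal counterpart, and with \emph{weak} contiguity in place of contiguity. I would argue both directions by contraposition.

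\emph{($\Leftarrow$, i.e.\ NAA2 fails $\Rightarrow$ weak contiguity fails for some $(P^n)$.)} Suppose there is a subsequence $n_k$, portfolios $X^k=(H^{n_k}\cdot S^{n_k})$ with $X^k_t\geq -1$ $\mathcal{P}^{n_k}$-q.s., an $\alpha>0$ and $P^k\in\mathcal{P}^{n_k}$ with $P^k(X^k_{T(n_k)}\geq\alpha)\to 1$. For any martingale measure $Q^{n_k}\in\mathcal{Q}^{n_k}$, the process $X^k$ is a $Q^{n_k}$-supermartingale (it is bounded below by $-1$ and a local martingale, hence a supermartingale, once we restrict to the $Q^{n_k}$-a.s.\ side; here $Q^{n_k}\ll$ some $P'\in\mathcal{P}^{n_k}$, so the q.s.\ lower bound transfers), whence $E_{Q^{n_k}}[X^k_{T(n_k)}]\leq 0$. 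Then for the sets $A^k:=\{X^k_{T(n_k)}\geq\alpha\}$ one gets, via $0\geq E_{Q^{n_k}}[X^k_{T(n_k)}]\geq \alpha Q^{n_k}(A^k) - Q^{n_k}((A^k)^c)$ (using $X^k\geq -1$), the bound $Q^{n_k}(A^k)\leq \tfrac{1}{1+\alpha}$, uniformly over $Q^{n_k}\in\mathcal{Q}^{n_k}$ and over $k$. Extending the subsequence to a full sequence by setting $A^n:=\emptyset$ (or any null set) for $n$ outside $\{n_k\}$, we have $P^n(A^n)\to 1$ along the subsequence while $\sup_{Q^n\in\mathcal{Q}^n}Q^n(A^n)\leq\tfrac{1}{1+\alpha}$. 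Choosing $\ep:=\tfrac{1}{1+\alpha}$, this contradicts weak contiguity: for this $\ep$ no $\delta>0$ and no choice $Q^{n,\ep}\in\mathcal{Q}^n$ can satisfy "$P^n(A^n)<\delta\Rightarrow Q^{n,\ep}(A^n)<\ep$" for all $n$ and all $A^n$, because we can look at complements — actually it is cleaner to rephrase: weak contiguity of $(\mathcal{Q}^n)$ w.r.t.\ $(P^n)$ implies, by a standard argument, that $P^n(A^n)\to 1$ forces $\sup_{Q^n\in\mathcal{Q}^n}Q^n(A^n)\to 1$ along the sequence; this is violated.

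\emph{($\Rightarrow$, i.e.\ weak contiguity fails for some $(P^n)\Rightarrow$ NAA2 fails.)} Negating weak contiguity: there is a sequence $(P^n)$, $P^n\in\mathcal{P}^n$, and an $\ep_0>0$ such that for every $\delta>0$ and every choice of $Q^{n}\in\mathcal{Q}^n$ there are infinitely many $n$ and sets $A^n\in\mathcal{F}^n$ with $P^n(A^n)<\delta$ but $Q^{n}(A^n)\geq\ep_0$. I would apply Theorem~\ref{dualHS} on each market $n$ with the pair $\mathcal{P}:=\{P^n\}$ — or, more in the robust spirit, $\mathcal{P}:=\mathcal{P}^n$ with $P:=P^n$ — and $\mathcal{Q}:=\mathcal{Q}^n$, and with parameters $\ep,\delta$: the contrapositive of Theorem~\ref{dualHS} says that if for every $Q^n\in\mathcal{Q}^n$ there is $A^n$ with $P^n(A^n)<\ep\delta$ but $Q^n(A^n)\geq 2\ep$, then the hypothesis of Theorem~\ref{dualHS} fails, i.e.\ there exists $A^n$ with some $P'\in\mathcal{P}^n$, $P'(A^n)<\delta$, yet $Q^n(A^n)\geq\ep$ for all $Q^n\in\mathcal{Q}^n$. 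Combining this "for all $\delta$" with a diagonal argument over $\delta=1/m$ and over the (infinitely many) relevant $n$, I extract a subsequence $n_k$ and sets $A^{n_k}$ with $\sup_{Q^{n_k}}P'^{n_k}$-type smallness $P^{n_k}(A^{n_k})\to 0$ but $\inf_{Q^{n_k}\in\mathcal{Q}^{n_k}}Q^{n_k}(A^{n_k})\geq \ep_0/2$, say. Now I invoke the robust Superhedging Theorem~\ref{super_n}: the indicator $f_k:=\mathbf 1_{A^{n_k}}$ (which is Borel, hence upper semi-analytic) has superhedging price $\pi_k=\sup_{Q^{n_k}\in\mathcal{Q}^{n_k}}Q^{n_k}(A^{n_k})$; but I need a \emph{lower} bound on all $Q^{n_k}$-probabilities, so instead I superhedge $g_k:=\mathbf 1_{(A^{n_k})^c}$, whose price is $\sup_{Q^{n_k}}Q^{n_k}((A^{n_k})^c)=1-\inf_{Q^{n_k}}Q^{n_k}(A^{n_k})\leq 1-\ep_0/2$. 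By Theorem~\ref{super_n} there is $H^{n_k}\in\mathcal{H}^{n_k}$ with $(1-\ep_0/2)+(H^{n_k}\cdot S^{n_k})_{T(n_k)}\geq \mathbf 1_{(A^{n_k})^c}$ $\mathcal{P}^{n_k}$-q.s. Rearranging, on $A^{n_k}$ one has $(H^{n_k}\cdot S^{n_k})_{T(n_k)}\geq -(1-\ep_0/2)=-1+\ep_0/2$, and off $A^{n_k}$ one has $(H^{n_k}\cdot S^{n_k})_{T(n_k)}\geq \ep_0/2$. Thus $X^k:=(H^{n_k}\cdot S^{n_k})$ satisfies $X^k_{T(n_k)}\geq -1+\ep_0/2 \geq -1$ $\mathcal{P}^{n_k}$-q.s.\ (one should also check the running bound $X^k_t\geq -1$; as in the NAA1 argument this is handled by passing to the value process stopped at the first time it hits $-1$, or one notes that the Bouchard–Nutz construction yields a strategy whose wealth stays above $-1+\ep_0/2$ up to a harmless truncation — I would cite the analogous step in the proof of Theorem~\ref{lfmftap}), and $X^k_{T(n_k)}\geq \ep_0/2$ on $(A^{n_k})^c$. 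Since $P^{n_k}(A^{n_k})\to 0$, we get $P^{n_k}(X^k_{T(n_k)}\geq \ep_0/2)\geq P^{n_k}((A^{n_k})^c)\to 1$. With $\alpha:=\ep_0/2$ this is exactly AA2$(\mathcal{P}^n)$, contradicting NAA2.

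\emph{Main obstacle.} The delicate point is the control of the \emph{running} minimum of the wealth process, not just its terminal value: Theorem~\ref{super_n} produces a strategy with the right terminal inequality but says nothing about intermediate wealths. As in the non-robust large-market literature and in the proof of Theorem~\ref{lfmftap}, the remedy is to stop the strategy the first time its value drops below $-1$ (or below $-1+\ep_0/2$); this stopped strategy is still predictable, its wealth is bounded below by the threshold at all times by construction, and its terminal wealth still dominates $\ep_0/2$ on $(A^{n_k})^c$ up to the event that the threshold was hit, whose probability one must show is small — this last estimate again uses the supermartingale property of $X^k$ under any $Q^{n_k}\in\mathcal{Q}^{n_k}$ together with the reverse bound $\inf_{Q^{n_k}}Q^{n_k}((A^{n_k})^c)$, closing the loop. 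A secondary technical point is the careful bookkeeping in the diagonal extraction of the subsequence from the "for every $\delta$, infinitely many $n$" negation of weak contiguity, and making sure the quantifier $\inf_{Q^n\in\mathcal{Q}^n}$ (uniform lower bound over \emph{all} martingale measures on market $n$) is exactly what Theorem~\ref{dualHS}, read contrapositively, delivers.
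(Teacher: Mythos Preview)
Your proposal is correct and uses the same three ingredients as the paper --- Theorem~\ref{dualHS}, the superhedging Theorem~\ref{super_n}, and the expectation computation under a martingale measure --- but organises both directions by contraposition, which makes the quantifier bookkeeping heavier than necessary. The paper argues directly: for NAA2 $\Rightarrow$ weak contiguity it first isolates (as Lemma~\ref{tech_lemma2}) the uniform $\varepsilon$--$\delta$ statement ``for all $n$ and all $A^n$ with some $P\in\mathcal{P}^n$ satisfying $P(A^n)<\delta$ there is $Q^n\in\mathcal{Q}^n$ with $Q^n(A^n)<\varepsilon$'', proved precisely by your superhedging-of-$\mathbf 1_{(A^n)^c}$ construction read as a contradiction, and then feeds this \emph{directly} into Theorem~\ref{dualHS} to produce the witnesses $Q^{n,\widetilde\varepsilon}$ for weak contiguity; the converse is the same expectation estimate you give.

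Two remarks on your execution. First, your option $\mathcal{P}:=\{P^n\}$ is not available, since $\mathcal{Q}^n\lll\{P^n\}$ can fail (a $Q\in\mathcal{Q}^n$ is only dominated by \emph{some} $P'\in\mathcal{P}^n$); you must take $\mathcal{P}=\mathcal{P}^n$, and then the contrapositive of Theorem~\ref{dualHS} yields a $P'\in\mathcal{P}^n$ that need not be the original $P^n$ --- harmless, since AA2 only asks for \emph{some} sequence in $\mathcal{P}^{n_k}$. Second, the running bound $X^k_t\geq -1$ that you flag as the main obstacle is handled in the paper without any stopping or supermartingale argument: once the terminal superhedging inequality $(1-\varepsilon)+(H^n\cdot S^n)_{T(n)}\geq \mathbf 1_{\overline{A^n}}$ holds $\mathcal{P}^n$-q.s., the intermediate bounds $(H^n\cdot S^n)_t\geq -(1-\varepsilon)$ follow immediately from NA$(\mathcal{P}^n)$ on market $n$ (if violated at some $t$ on a non-polar set, trading $H^n$ only after $t$ on that event would be a robust arbitrage), exactly as noted in the proof of Lemma~\ref{tech_lemma}.
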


\subsection{Proofs of Theorem~\ref{lfmftap} and Theorem~\ref{lfmftap2}}
First, we will prove two useful lemmas that relate the absence of asymptotic arbitrage to properties of the sets of martingale measures. With the help of these technical lemmas, we can apply the two quantitative Halmos-Savage Theorems of Section~\ref{secHS} to prove the two FTAPs.

\begin{lemma}\label{tech_lemma} Assume that  NAA1$(\mathcal{P}^n)$ holds. Then for each $\ep>0$ there exists $\delta>0$ such that, for all $n\geq 1$ and all $A^n\in\mathcal{F}^n$ with the property that there exists $P^n\in\mathcal{P}^n$ with $P^n(A^n)\geq\ep$, there exists $Q^n\in\mathcal{Q}^n$ with $Q^n(A^n)\geq\delta$.
\end{lemma}

\begin{proof}
Suppose NAA1 holds and the $\ep$-$\delta$ condition would not hold. That means there exists $\ep>0$ such that for all $\delta>0$ there exists an $n=n(\delta)$ and $A^n\in\mathcal{F}^n$ such that there exists $P^n\in\mathcal{P}^n$ with $P^n(A^n)\geq\ep$ but
$$\sup_{Q^n\in\mathcal{Q}^n}Q^n(A^n)<\delta.$$ Obviously $f=\ind_{A^n}$ is measurable and $E_Q[f]=Q(A^n)\geq 0$, for all $Q^n$.
We can thus apply Theorem~\ref{super_n} to get, for $f=\ind_{A^n}$,
$$\pi(f)=\sup_{Q^n\in\mathcal{Q}^n}Q^n(A^n)<\delta.$$
Moreover $\pi(f)\geq0>-\infty$, and so there exists $H^n$ such that
\begin{equation}\label{asarb}
\delta+H^n(S^n_{1}-S^n_0)\geq f=\ind_{A^n}\quad\mathcal{P}^n\text{-q.s.}
\end{equation}
Inequality (\ref{asarb}) above allows to construct an AA1. Let $X^n_1=H^n(S^n_{1}-S^n_0)$. Then by (\ref{asarb})  $X^n_1\geq -\delta$, $\mathcal{P}^n$-q.s.  Moreover,
$$P^n(X^n_1\geq1-\delta)\geq P^n(A^n)\geq\ep.$$
This holds for fixed $\ep$ and every $\delta$. So, for $\delta_k\downarrow0$ choose $n_k=n(\delta_k)$ and $\alpha=\min(\ep,1-\delta_1)$ and $P^k\in\mathcal{P}^{n_k}$ as above. This gives an AA1.
\end{proof}

\begin{lemma}\label{tech_lemma2} Assume that NAA2$(\mathcal{P}^n)$ holds. Then for each $\ep>0$ there exists $\delta>0$ such that, for all $n\geq 1$ and all $A^n\in\mathcal{F}^n$ with the property that there exists $P^n\in\mathcal{P}^n$ with $P^n(A^n)<\delta$, there exists $Q^n\in\mathcal{Q}^n$ with $Q^n(A^n)<\ep$.
\end{lemma}

\begin{proof}
Suppose NAA2 holds and the $\ep$-$\delta$ condition would not hold. That means there exists $\ep>0$ such that for all $\delta>0$ there exists an $n(\delta)$   and $A^n\in\mathcal{F}^n$ such that there exists $P^n\in\mathcal{P}^n$ with $P^n(A^n)<\delta$ but
$\inf_{Q^n\in\mathcal{Q}^n}Q^n(A^n)\geq\ep$. This obviously implies for $\overline{A^n}=\Omega^n\setminus A^n$ that $P^n(\overline{A^n})\geq 1-\delta$ and
$$\sup_{Q^n\in\mathcal{Q}^n}Q^n(\overline{A^n})<1-\ep.$$
Apply again Theorem~\ref{super_n}, now for $f=\ind_{\overline{A^n}}$, to find
$H^n$ such that
\begin{equation}\label{asarb2}
1-\ep+H^n(S^n_1-S^n_0)_{1}\geq \ind_{\overline{A^n}}\quad\mathcal{P}^n\text{-q.s.}
\end{equation}
Inequality (\ref{asarb2}) above allows to construct an AA2. Let $X^n_1=H^n(S^n_{1}-S^n_0)$. As in the proof of Lemma~\ref{tech_lemma}  it follows that  $X^n_1\geq -1+\ep$ $\mathcal{P}^n$-q.s.  Moreover, by (\ref{asarb2}), 
$$P^n(X^n_1\geq\ep)\geq P^n(\overline{A^n})\geq1-\delta.$$
This holds for fixed $\ep$ and every $\delta$. So, for $\delta_k\downarrow0$ choose $n_k=n(\delta_k)$ and $\alpha=\ep$ and $P^k\in\mathcal{P}^{n_k}$ as above. This gives an AA2.
\end{proof}

In order to make use of the quantitative Halmos-Savage type theorems from Section~\ref{secHS}, we need convexity of the sets $\mathcal{Q}^n$. We give the easy proof.

\begin{lemma}\label{setQconvex}
The set $\mathcal{Q}^n$ is convex, for all $n\geq1$.
\end{lemma}

\begin{proof}
By assumption $\mathcal{P}^n$ is convex. Let $Q_1$, $Q_2\in\mathcal{Q}$ (where we skip the index $n$ for ease of notation). Then $Q_1\ll P_1$ and $Q_2\ll P_2$ for some measures $P_1,P_2\in\mathcal{P}$. As $\mathcal{P}$ is convex $P=\frac{P_1+P_2}{2}\in \mathcal{P}$. Clearly each convex combination $Q=\lambda Q_1+(1-\lambda) Q_2$ for $0<\lambda<1$ satisfies: $Q\ll P$.
The martingale property  for $Q$ follows easily  as  $Q_1$ and $Q_2$ are martingale measures and $E_Q[\,\cdot\,]=\lambda E_{Q_1}[\,\cdot\,]+(1-\lambda)E_{Q_2}[\,\cdot\,]$.
\end{proof}

\begin{proof}[Proof of Theorem~\ref{lfmftap}]
Assume first that NAA1 holds. Lemma~\ref{tech_lemma} above gives that for each $m\geq1$ and $\ep_m=\frac1{m}$ there exists $\delta_m$ such that the condition of the lemma holds with $\ep_m$ and $\delta_m$. Fix a sequence $(P^n)_{n\geq1}$ with $P^n\in\mathcal{P}^n$. 
Lemma~\ref{tech_lemma} implies that the assumption of Theorem~\ref{HS} is satisfied for $\ep_m$ and $\delta_m$. 
Use Theorem~\ref{HS} to find, for $\ep_m$, $\delta_m$, and $n\geq1$, the measure $Q^{n,m}\in\mathcal{Q}^n$ such that for all $n$ and $A^n\in\mathcal{F}^n$ with $P^n(A^n)\geq 2\ep_m$ we have $Q^{n,m}(A^n)\geq\frac{\ep_m\delta_m}{2}$. Define a new measure $Q^n$ as follows:
\begin{equation}\label{q}
Q^n=\frac1{1-2^{-n}}\sum_{m=1}^n2^{-m}Q^{n,m}.
\end{equation}
By  Lemma~\ref{setQconvex} $\mathcal{Q}^n$ is convex. Hence, we have that $Q^n\in\mathcal{Q}^n$. We claim that $(P^n)_{n\geq1}\triangleleft (Q^n)_{n\geq1}$. The proof of this claim works exactly as in \cite{KS:96}. For the convenience of the readers, we recall the proof here: let $A^n\in\mathcal{F}^n$ with $Q^n(A^n)\to0$. Suppose $P^n(A^n)$ does not converge to $0$ for $n\to\infty$. Then there exists $\alpha>0$ and a subsequence (denoted by $n'$) such that
$$P^{n'}(A^{n'})\geq\alpha,$$
for all $n'$.  Choose $N_0\geq 1$ such that $\alpha>\frac2{N_0}=2\ep_{N_0}$. Define $\beta=2^{-N_0}\frac{\ep_{N_0}\delta_{N_0}}{2}>0$. Hence
$$P^{n'}(A^{n'})\geq2\ep_{N_0},$$
for all $n'$. By the above, we have that $Q^{n',N_0}(A^{n'})\geq \frac{\ep_{N_0}\delta_{N_0}}{2}$, for all $n'$. Therefore we have that, for all $n'\geq N_0$,
\begin{align*}
    Q^{n'}(A^{n'})
    &=\frac1{1-2^{-n'}}\sum_{m=1}^{n'}2^{-m}Q^{n',m}(A^{n'}) \\
    &\geq\frac1{1-2^{-n'}}2^{-N_0}Q^{n',N_0}(A^{n'})\geq\frac1{1-2^{-n'}}\beta,
\end{align*}
a contradiction to $Q^n(A^n)\to0$, for $n\to\infty$.

For the other direction, assume that for each sequence $(P^n)_{n\geq1}$ with $P^n\in\mathcal{P}^n$ there exists a sequence $(Q^n)_{n\geq 1}$ with $Q^n\in\mathcal{Q}^n$ such that $(P^n)_{n\geq 1}\triangleleft (Q^n)_{n\geq 1}$. 
We will show NAA1. 
Indeed, assume that an AA1 exists. 
Then there exists a subsequence $n_k$ and integrands $H^{n_k}$ such that, for all $k\geq1$, $X^k_{1}=H^{n_k}(S^{n_k}_{1}-S^{n_k}_{0})\geq -c_k$ $\mathcal{P}^{n_k}$-q.s.
Moreover, there exists a sequence $P^k\in\mathcal{P}^{n_k}$ such that $P^k(X^k_{1}\geq\alpha)\geq\alpha$, for all $k\geq1$,  for some $\alpha>0$.
Set $A^k=\{X^k_{1}\geq\alpha\}$, and define the sequence $(P^n)_{n\geq1}$ as follows: for $n=n_k$, $P^n=P^{n_k}$, and for $n\notin\{n_k,k\geq1\}$ choose an arbitrary $P^n\in\mathcal{P}^n$. 
Then there exists $(Q^n)_{n\geq 1}$ such that $(P^n)_{n\geq 1}\triangleleft (Q^n)_{n\geq 1}$. 
In particular, this property holds for the subsequences $n_k$, $k\geq1$. 
As $P^{n_k}(A^k)\geq\alpha$ for all $k$ there exists $\beta>0$ such that $Q^{n_k}(A^k)\geq\beta$ for all $k$ (otherwise this leads to a contradiction to $(P^n)_{n\geq 1}\triangleleft (Q^n)_{n\geq 1}$). 
For each $Q^k$ there exists $\tilde{P}^k\in\mathcal{P}^{n_k}$ with $Q^k\ll \tilde{P}^k$. 
Hence, as $X^k_{1}\geq -c_k$, $\mathcal{P}^{n_k}$-q.s. (and therefore  $\tilde{P}^k$-a.s.) for all $k$, we get $Q^k(X^k_{1}\geq -c_k)=1$, and 
$$E_{Q^k}[X^k_{1}]\geq -c_kQ^k(\Omega^{n_k}\setminus A^k)+\alpha Q^k(A^k)>-c_k+\alpha\beta>0,$$
for $k$ large enough. 
This is a contradiction to the fact that $Q^k$ is a martingale measure for $S^{n_k}$. Hence, there cannot exist an AA1.

\end{proof}

Finally, we will give the proof of Theorem~{\ref{lfmftap2}}, which immediately follows from Lemma~\ref{tech_lemma2}. 

\begin{proof}[Proof of Theorem~\ref{lfmftap2}]
Assume first that NAA2 holds.  Lemma~\ref{tech_lemma2} above  implies that for each $\ep>0$ there exists $\delta=\delta(\ep)>0$ such that for all $n$ the assumptions of Theorem~\ref{dualHS} are satisfied (for this $\ep$ and $\delta(\ep)$). 
Fix now a sequence $(P^n)_{n\geq1}$ with $P^n\in\mathcal{P}^n$, and fix an arbitrary $\widetilde{\ep}>0$. 
Choose $\ep$ such that $2\ep<\widetilde{\ep}$, set $\delta=\delta(\ep)$, and define $\widetilde{\delta}=\ep\delta$. 
By Theorem~\ref{dualHS} there exists $Q^{n, \widetilde{\ep}}\in\mathcal{Q}^n$ such that for all $A^n\in\mathcal{F}^n$ with $P^n(A^n)<\widetilde{\delta}=\ep\delta$ we have that $Q^{n, \widetilde{\ep}}<\widetilde{\ep}=2\ep$. Hence  $(\mathcal{Q}^n)_{n\geq1}\triangleleft_w (P^n)_{n\geq 1}$.

For the other direction, assume that for every sequence $(P^n)_{n\geq1}$, $P^n\in\mathcal{P}^n$, we have that $(\mathcal{Q}^n)_{n\geq1}\triangleleft_w(P^n)_{n\geq 1}$.
We will show NAA2. Indeed, suppose an AA2 exists. 
Then there exists a subsequence $n_k$ and integrands $H^{n_k}$ such that, for all $k\geq1$, $X^k_{1}=H^{n_k}(S^{n_k}_{1}-S^{n_k})_{0}\geq -1$ $\mathcal{P}^{n_k}$-q.s. 
Moreover, there exists a sequence $P^k\in\mathcal{P}^{n_k}$ such that $P^k(X^k_{1}\geq\alpha)\to1$, for $k\to\infty$,  and some $\alpha>0$. 
Fix a sequence $(P^n)_{n\geq 1}$ as follows: for $n=n_k$, $P^n=P^{n_k}$, and for $n\notin\{n_k,k\geq1\}$ choose an arbitrary $P^n\in\mathcal{P}^n$. Fix 
 $\ep>0$  small enough such that $-\ep+\alpha(1-\ep)>0$. 
 By $(\mathcal{Q}^n)_{n\geq1}\triangleleft_w(P^n)_{n\geq 1}$ for this $\ep$ there exists $\delta>0$ and a sequence $(Q^{n,\ep})_{n\geq1}$ as in Definition~\ref{weak_contiguity}.
Define $A^k=\{X^k_{1}<\alpha\}$, then for $k$ large enough $P^{n_k}(A^k)<\delta$. This implies that $Q^{n_k,\ep}(A^k)<\ep$, for all $k\geq1$.
For this $Q^{n_k,\ep}$ there exists $\tilde{P}^k\in\mathcal{P}^{n_k}$ with $Q^{n_k,\ep}\ll \tilde{P}^k$. 
Hence, $Q^{n_k,\ep}(X^k_{1}\geq -1)=1$, because $X^k_{1}\geq-1$, $\mathcal{P}^{n_k}$-q.s. 
So, we get
$$E_{Q^{n_k,\ep}}[X^k_{1}]\geq -Q^{n_k,\ep}(A^k)+\alpha Q^{n_k,\ep}(\Omega^{n_k}\setminus A^k)\geq-\ep+\alpha(1-\ep)>0,$$
for $k$ large enough. This is a contradiction to the fact that $Q^{n_k,\ep}$ is a martingale measure for $S^{n_k}$. Hence, there cannot exist an AA2.

\end{proof}

\section{An application to the one-period robust binomial model}\label{App_robust_bin}

In this section, we consider a sequence of one-period robust binomial models following Section~5.3 and the appendix of \cite{klein-koestenberger-robust-duality}. For each $n$, this is exactly the robust binomial model of  \cite{blanch_carassus} for $T=1$. We shall also use  notation and results of \cite{liebrich2022}. In this section the dimension is $d(n)=1$ throughout.
Let $\Omega \equiv \Omega^n\equiv(0,+\infty)$, equipped with the Borel $\sigma$-algebra $\mathcal{F}=\mathcal{B}(\Omega)$. 
The (discounted) price process of the risky asset 
is given by $S^n_0=1$ and 
$$S^n_{1}=S^n_0\cdot Y^n_{1},$$
where $Y^n_{1}: (0,+\infty)\to(0,+\infty)$ is a bijective measurable map, for example, $Y^n_1(\omega)=\omega$. Let $\mathcal{M}_1$ be the set of all probability measures on $(\Omega,\mathcal{F})$. Moreover, we consider lower and upper bounds for 
the probabilities as well as for the down- and up-moves of the binomial model $n =1,2,\dots$,
  such that in particular the assumptions of \cite{blanch_carassus} are satisfied for the one-period model. We will make some additional assumptions and comment on these in Remark~\ref{Remark_Comments_Ass} below. More precisely, we shall consider the following conditions on the model parameters, where the intervals $[d_n, D_n], [u_n, U_n]$ stand for the ranges of the down- and up-factors of the $n$th model, while $[\pi_n,\Pi_n]$ and $[1-\Pi_n,1-\pi_n]$ denote the range of  the respective probabilities.

\begin{mass}\label{assumptions}
Let $u^n, U^n, d^n, D^n,\pi^n, \Pi^n\in\mathbb{R}$. We make the following set of assumptions for all $n\in \mathbb{N}$:
\begin{enumerate}[label=\hspace{1\parindent}(A\arabic*),ref=(A\arabic*),leftmargin=*,align=left]
  \item \label{ass_blan_car_1step} The model parameters satisfy:
\begin{enumerate}
  \item[(i)] $0<{\pi}^n\leq  {\Pi}^n<1$,
  \item[(ii)] $d^n\leq D^n$, $u^n\leq U^n$,
  \item[(iii)] $0<d^n<1<U^n$.
\end{enumerate}
\item \label{ass34} $d^n<\min(u^n,D^n)$ and $\max(u^n,D^n)<U^n$. 
\item \label{par_limits} There exist $d\in[0,1]$, $D\in[0,+\infty]$, $u\in[0,+\infty]$, $U\in[1,+\infty]$ such that the sequences of parameters converge: $\lim_{n\to\infty}d^n=d$, $\lim_{n\to\infty}D^n=D$, $\lim_{n\to\infty}u^n=u$, $\lim_{n\to\infty}U^n=U$. 
\item \label{avoid_riskless_limit} The limit points of the model parameters satisfy:
  \begin{enumerate}
    \item[(i)] $d<U$, 
    \item[(ii)] $[d,D]\neq\{1\}$ or $\limsup\Pi^n>0$,
    \item[(iii)]  $[u,U]\neq\{1\}$ or $\liminf\pi^n<1$.
  \end{enumerate}

\end{enumerate}
\end{mass}

Let $I^n\subset \mathbb{R}^3$ be defined as $I^n=[u^n,U^n]\times[d^n,D^n]\times [{\pi}^n,{\Pi}^n]$ and let $\mathcal{L}^n_1=\{\pi\delta_{u}+(1-\pi)\delta_{d}: (u,d,\pi)\in I^n\}$, where $\delta_x$ denotes the Dirac measure with $\delta_x(A)=1$, for $A\in\mathcal{F}$ iff $x\in A$ and $\delta_x(A)=0$ otherwise.
For a probability measure $R$ on $(\Omega, \mathcal F)$ we denote by $R\circ (Y_1^n)^{-1}$ the law of $Y_1^n$ with respect to $R$, that is, for $A\in\mathcal{F}$, $R\circ (Y_1^n)^{-1}(A):=R(Y_1^n\in A)$. 

\begin{remark}\label{Remark_Comments_Ass}
Let us comment on the set of conditions introduced in Assumption \ref{assumptions}:
\begin{enumerate}
 \item Assumption \ref{ass_blan_car_1step} corresponds to the assumptions  of \cite{blanch_carassus}  for the one-period model.
\item 
  Assumption \ref{ass34}  corresponds to  Assumption 39 in
\cite{klein-koestenberger-robust-duality}, allowing us to apply their results.
It has the following consequences for each market $n$: first, there is real uncertainty in the up value as well as in the down value  since $d^n <D^n$ and $u^n <U^n$. Second, the smallest possible down value is strictly less than the smallest possible up value, and the largest possible down value is strictly less than the largest possible up value. 
\item 
Assumption~\ref{par_limits} does not restrict the possible values of limits of $D^n$ and $u^n$. Moreover, note that $0\leq d\leq 1$ and $1\leq U\leq+\infty$ for each accumulation point of the corresponding sequence. So, Assumption~\ref{par_limits} does not restrict the possible limits of $U^n$, $d^n$ as well.  More generally, one could assume Assumption \ref{par_limits} for all possible choices of accumulation points and still would get the following results. Note also that we do not assume anything on the convergence of $(\pi^n)$ or $(\Pi^n)$.

 Observe that as a special case of our setting we may recover the classical binomial model by letting $d^n=D^n$, $u^n=U^n$, and $\pi^n=\Pi^n$. Assumption~\ref{par_limits} then implies that the down- and up-states of the binomial model converge to $u$ and $d$, respectively, while no assumptions are made on the convergences of the associated probabilities $(\pi^n)$.
For details on possible limit models see Subsection~\ref{limit model}. 
\item Assumption \ref{avoid_riskless_limit} guarantees that the large financial market  does not converge to the riskless asset with constant value $1$ (in discounted terms). 
Indeed, we speak of convergence to the riskless asset if one of the following conditions holds:
\begin{enumerate}\item[(a)] $[d^n,D^n]\times[u^n, U^n]\to \{1\}\times\{1\}$, for $n\to\infty$,
    \item[(b)] $[d^n,D^n]\to\{1\}$ and $[\pi^n,\Pi^n]\to\{0\}$,
     \item[(c)] $[u^n,U^n]\to\{1\}$ and $[\pi^n,\Pi^n]\to\{1\}$.\end{enumerate}
The interpretation is clear: in (a) for $n\to\infty$ the uncertainty in the upper as well as the lower value becomes smaller and smaller and vanishes in the limit where the upper and lower values are then both equal to  $1$. Hence, the uncertainty as well as the risk disappears for $n\to\infty$ and the stochastic models converge to the constant $1$. In (b) and (c) this convergence is due to a concentration on the upper or the lower part which is converging to $1$ (and the corresponding probability interval goes to $\{1\}$).

Let us now explain how we avoid such convergences to the riskless asset by Assumption \ref{avoid_riskless_limit}.
 By Assumption \ref{ass34} we have $d^n<\min(D^n, u^n)\leq \max(D^n, u^n)<U^n$ with $d^n<1$ and $U^n>1$, for all $n\geq1$. Hence, by Assumption~\ref{par_limits} it always holds that $d\leq \min(D,u)\leq \max(D,u)\leq U$. Suppose now that Assumption~\ref{avoid_riskless_limit} (i) would not hold, then $d=U$. Then it has to hold that  $d=U=1$ and therefore $d=D=u=U=1$. Thus we are in the situation of case (a) above which we would like to exclude. Moreover, Assumption \ref{avoid_riskless_limit} (ii) guarantees that either the down parts do not converge to $1$ (then there is no need to restrict the probability interval further) or, if all down parts converge to $1$, then $\limsup\Pi^n>0$ allows to choose a subsequence where $\Pi^n\to\beta>0$ and therefore the up parts do not disappear in the limit. This avoids situation (b). For Assumption \ref{avoid_riskless_limit} (iii) the explanation is similar to (ii) only with reversed roles of up and down parts, avoiding situation (c).

\end{enumerate}
\end{remark}

Let us now define the  sets $\mathcal{P}^n$ which describe the possible choices of probabilities reflecting the uncertainty on each market $n$.

\begin{definition}\label{uncertainty_sets_n_1} Let
   \begin{equation}
\mathcal{R}^n =\bigl\{R\in\mathcal{M}_1:  R\circ (Y^n_{1})^{-1}\in \mathcal{L}^n_{1}\bigr\}.\label{supported_alt_0n_1}\end{equation}
\begin{equation}\label{uncer_set_n_1}
\mathcal{P}^n =\sigma\text{-conv}(\mathcal{R}^n)=\biggl\{P=\sum_{k=1}^\infty\alpha_kR_k :R_k\in\mathcal{R}^n, 0\leq \alpha_k\leq1, \sum_{k=1}^{\infty}\alpha_k=1\biggr\}\end{equation}
\end{definition}

\begin{remark}\label{importantremark}
We make the following observations.
\begin{enumerate}
\item
    The uncertainty set of \cite{blanch_carassus}
    (for $t=T=1$) is given as the set $\mathcal{P}^n$ of all convex combinations of measures in $\mathcal{R}^n$.
For our purposes we will allow  $\sigma$-convex-combinations as well. Hence, we define the uncertainty set ${\pp}^n$ as above. Note that in the notation of \cite{klein-koestenberger-robust-duality} the set  $\mathcal{P}^n$ is equal to the set ${\widetilde{\pp}}^n$.
\item  We get by \cite{blanch_carassus}
  that NA$(\rrr^n)$ and  NA$(\pp^n)$  hold for each $n\geq 1$. See also \cite{klein-koestenberger-robust-duality} (Lemma~55) for an easy proof for our case of the robust one-period model.
\item Observe that Assumption \ref{ass34} still allows for various combinations of no arbitrage and arbitrage for particular choices of measures $P\in\pp^n$, see 
  \cite{klein-koestenberger-robust-duality} (Lemma~56). This means that the strong NA condition of \cite{blanch_carassus} 
still does not hold (compare this to Lemma~4.4 therein).

\end{enumerate}
\end{remark}

 \begin{remark}\label{importantremark2}  The results in this section extensively use the theory developed in Section \ref{sec:RLFM}.
  This requires Assumption~\ref{closed}, i.e., the robust duality $l(\LL(\mathcal{P}^n))=E_n'$ for all $n\geq1$.
To ensure this property, \cite{klein-koestenberger-robust-duality} introduced the so-called \emph{Hahn-Extension} of the $\sigma$-algebra $\mathcal{F}$. 
In our setting, it is given by
\begin{equation*}
  \mathcal{H}_{\mathcal{F}}^{\widetilde{ \mathcal{R} }^{n}} = \sigma \biggl( \mathcal{F} \cup \biggl\{ \bigcup_{R\in \widetilde{\mathcal{R}}^{n}} E_{R}: E_{R} \subseteq S_{R}, E_{R}\in \mathcal{F} \biggr\} \biggr),
\end{equation*}
where $\widetilde{ \mathcal{R} }^{n} \subseteq \mathcal{R}^{n}$ is a special set of pairwise singular measures,\footnote{The expert reader may note that $\mathcal{R}^{n}$ already is a so-called supported alternative to $\pp^{n}$, in the sense of \cite{liebrich2022}. However, it was shown in \cite{klein-koestenberger-robust-duality} that $\mathcal{R}^{n}$ is not disjointly supported. The set $\widetilde{ \mathcal{R} }^{n}$ is a disjointly supported alternative to $\mathcal{R}^{n}$.} and $S_{R} = \{Y_{1} = d_{R}\} \cup \{Y_{1} = u_{R}\}$ is the \emph{support} of the measure $R$, see  \cite{klein-koestenberger-robust-duality} (Section 5.3) for details. 
It is important to note that $\widetilde{ \mathcal{R} }^{n}$ is in general not countable, and hence $\mathcal{H}_{\mathcal{F}}^{\widetilde{ \mathcal{R} }^{n}}$ can (and usually will) contain sets that are not in $\mathcal{F}$. 
The measures $\pp^{n}$ are readily extended to $\mathcal{H}_{\mathcal{F}}^{\widetilde{ \mathcal{R} }^{n}}$, since for each $P\in \pp^{n}$ there can be at most countably many $R_{k}'s$ in $\widetilde{ \mathcal{R} }^{n}$, such that $P(S_{R_{k}}) \neq 0$, and for every $A\in \mathcal{H}_{\mathcal{F}}^{\widetilde{ \mathcal{R} }^{n}}$ and $R\in \widetilde{ \mathcal{R} }^{n}$, we have that $A\cap S_{R} \in \mathcal{F}$ see \cite{klein-koestenberger-robust-duality} (Lemma~21). 
This is summarized in the following definition.\end{remark}

\begin{definition}\label{extended_model}
We define the $\sigma$-algebra $\widetilde{\mathcal{F}}^n  := \mathcal{H}_{\mathcal{F}}^{\widetilde{ \mathcal{R}}^n }$ to be the Hahn-extension of $\mathcal{F}$ for the localization $\widetilde{ \mathcal{R} }^n$ (as referred to in Remark~\ref{importantremark2}  above). 
We denote the extension of $\pp^{n}$ to $\widetilde{ \mathcal{F} }^{n}$ by $\widetilde{ \pp}^{n}$. 
\end{definition}

We may now apply \cite{klein-koestenberger-robust-duality} (Lemma~ and Corollary 46), which shows that, for all $n\geq 1$, the robust one-period binomial model based on 
$(\Omega^n, \widetilde{\mathcal{F}}^n,\widetilde{ \pp }^{n})$ is Hahn-localizable, and $\LL(\widetilde{ \pp }^{n})$ satisfies the robust $L^{1}$-$L^{\infty}$ duality by \cite{klein-koestenberger-robust-duality} (by Theorem~4), i.e., $(E_n)'=l(\mathbb{L}^\infty( \widetilde{ \pp}^n))$ is satisfied. Details can be found in \cite{klein-koestenberger-robust-duality}.  

Observe that $\mathrm{NAA1}(\pp^n)$ and $\mathrm{NAA2}(\pp^n)$  hold if and only if $\mathrm{NAA1}(\widetilde{ \pp }^{n})$ and $\mathrm{NAA2}(\widetilde{ \pp }^{n})$ hold respectively. 
        To see this, note that the events in Definition \ref{AA1} and Definition \ref{AA2} are already in $\mathcal{F}$, and hence the extended probability measures on $\widetilde{\mathcal{F}}^n$ agree with the original probability measures on $\mathcal{F}$ (see \cite{klein-koestenberger-robust-duality} for details).
Hence, we will slightly abuse notation, and write $\pp ^{n}$ for $\widetilde{ \pp }^{n}$.

  Our aim is now to characterize asymptotic arbitrage of the first and second  kind on the large financial market $\left((S^n_t)_{t=0,1}\right)_{n\geq1}$ in terms of conditions on the model parameters.

To this end we will formulate four theorems.  
The results are different for $U<+\infty$ and $U=+\infty$. In the second case the upper values in the sequence of robust binomial models
are allowed to become arbitrarily large, which, obviously, has an influence on the characterization of asymptotic arbitrage properties.

In the rest of this section we will always suppose that the set of conditions formulated in Assumption \ref{assumptions} holds
without further mentioning them.

\begin{theorem}\label{thm_U0_finite}
 Let $U<+\infty$. Then the following properties are equivalent:
 \begin{enumerate}
     \item NAA1$(\pp^n)$.
     \item $d<1<U$.
     \item For each sequence $(P^n)_{n\geq1}$ with $P^n\in \mathcal{P}^n$, for all $n$, there exists a sequence
 $(Q^n)_{n\geq1}$ with $Q^n\in \mathcal{Q}^n$, for all $n$, such that $(P^n)_{n\geq1}\triangleleft (Q^n)_{n\geq1}$.
 \end{enumerate}
\end{theorem}

\begin{theorem}\label{thm_U0_finite_AA2}
 Let $U<+\infty$.   Then the following properties are equivalent:
 \begin{enumerate}
     \item NAA2$(\pp^n)$.
      \item $D\leq 1\leq u$ and 
      \begin{enumerate}
          \item[(a)] if $d<1$ then $\liminf\pi^n>0$,
          \item[(b)] if $d=D=1$ then $\max(D^n-1,0)=o(1-d^n)$,
          \item[(c)] if $U>1$ then $\limsup\Pi^n<1$,
          \item[(d)] if $u=U=1$ then $\max(1-u^n, 0)=o(U^n-1)$.
      \end{enumerate}      
     \item For each sequence $(P^n)_{n\geq1}$ with $P^n\in \mathcal{P}^n$, for all $n$, we have that
 $(\mathcal{Q}^n)_{n\geq1}\triangleleft_w(P^n)$.
 \end{enumerate}
\end{theorem}

\begin{theorem}\label{thm_U0_infinite}
 Let $U=+\infty$. Then the following properties are equivalent:
 \begin{enumerate}
     \item NAA1$(\pp^n)$.
     \item $\limsup{\Pi^n}=0$, $d<1$ and $D<+\infty$.
     \item For each sequence $(P^n)_{n\geq1}$ with $P^n\in \mathcal{P}^n$, for all $n$, there exists a sequence
 $(Q^n)_{n\geq1}$ with $Q^n\in \mathcal{Q}^n$, for all $n$, such that $(P^n)_{n\geq1}\triangleleft (Q^n)_{n\geq1}$.
     \end{enumerate}  
\end{theorem}

\begin{theorem}\label{thm_U0_infinite_AA2}
 Let $U=+\infty$. Then the following properties are equivalent:
 \begin{enumerate}
     \item NAA2$(\pp^n)$.
     \item $\limsup{\Pi^n}<1$, $D\leq 1$ and, if $d=D=1$, then $\max(D^n-1,0)=o(1-d^n)$.
     \item For each sequence $(P^n)_{n\geq1}$ with $P^n\in \mathcal{P}^n$, for all $n$, we have that
 $(\mathcal{Q}^n)_{n\geq1}\triangleleft_w(P^n)$.
          \end{enumerate}  
\end{theorem}

\begin{remark}\begin{enumerate}\item Observe that the characterizations (i) $\Leftrightarrow$ (ii) in all four theorems do \emph{not} use the theory of \cite{klein-koestenberger-robust-duality}, hence the proofs work without the Hahn-extension $\widetilde{\mathcal{F}}^n$ given in Definition~\ref{extended_model}. 
Nevertheless, the assertions remain true in the extended setting (and are also  formulated  like this in the above theorems).
       
        Indeed, by assumption, $Y^n_1:(0,+\infty)\to(0,+\infty)$ is  $\mathcal{F}$ measurable. By enlarging $\mathcal{F}$ to $\widetilde{\mathcal{F}}^n$ we still have that $Y^n_1$ is $\widetilde{\mathcal{F}}^n$ measurable. Moreover, all sets $A$ involved in asymptotic arbitrage considerations are given by finite combinations of sets of the form  $A=\{Y^n_1=x\}$, $x\in(0,+\infty)$. Sets $A$ of this form are always contained in the original $\sigma$-algebra anyway, i.e., $A\in\mathcal{F}$. So there is no difference in these proofs.
        \item Observe that the implications (ii) $\Leftrightarrow$ (iii)  are not obvious in all four  theorems. However, Theorem~\ref{lfmftap} and Theorem~\ref{lfmftap2} can be applied here to show that (i) $\Leftrightarrow$ (iii)  as $l(\mathbb{L}^{\infty}(\pp^n))=E_n'$ for the proper choice of the $\sigma$-algebra $\widetilde{\mathcal{F}}^n  = \mathcal{H}_{\mathcal{F}}^{\widetilde{ \mathcal{R}}^n }$ and the extended set $\pp^n$ as given in Definition~\ref{extended_model}. Together with the direct proofs of  (i) $\Leftrightarrow$ (ii) the equivalences of all three conditions follow.
        \end{enumerate}
    
\end{remark}
We postpone the proofs of the above theorems to Subsection~\ref{proofs_AA} and first study the consequences of the theorems on possible limit models in Subsection~\ref{limit model}. In the remainder of this section we will repeatedly  use 
the following notation.

\begin{notation}\label{nota_d_u}
Let $R\in\mathcal{R}^n$. The law of $Y^n_1$ under $R$ is a binomial distribution. We will  use the notation $u_R$, $d_R$, $\pi_R$  for the parameters corresponding to $R$, where $u_R$, $d_R$ denote the up and down value corresponding to $R$ and $\pi_R=R(Y^n_1=u_R)$, where  $(u_R,d_R,\pi_R)\in I^n=[u^n,U^n]\times[d^n,D^n]\times [{\pi}^n,{\Pi}^n]$.
\end{notation}

 \subsection{NA of robust limit models and its relation to NAA1 and NAA2}\label{limit model}

Let us first clarify what we mean by a robust limit model. We already assumed in Assumption~\ref{par_limits} that the parameters $d^n$, $D^n$, $u^n$, $U^n$ converge to $d\in[0,1]$, $D\in[0,+\infty]$, $u\in[0,+\infty]$, $U\in[1,+\infty]$, respectively. Observe that we did not assume convergence of $\pi^n, \Pi^n$, which we will now do to properly define a robust limit model.

\begin{definition}[Robust limit model]\label{def_limmitmod}

Assume that there exists $\pi,\Pi\in[0,1]$ with $\lim_{n \to\infty}\pi^n=\pi$ and
$\lim_{n \to\infty}\Pi^n=\Pi$. Then the robust limit binomial model is analogously defined as the robust binomial model for each $n$ but with the additional degenerate possible cases $U=+\infty$, $d=0$, $\pi=0$ or $=1$ and $\Pi=0$ or $=1$. In particular, this means, that in the limit the bijective random variable $Y_1$ satisfies $Y_1: [0,+\infty]\to[0,+\infty]$ with $\Omega=[0,+\infty]$ and $\mathcal{F}=\mathcal{B}[0,+\infty]$. The (discounted) price process of the risky asset in the limit model is given by 
 $S_0=1$ and  $$S_{1}=S_0\cdot Y_{1}.$$
The sets $\mathcal{R}$ and $\mathcal{P}$ for the limit model are defined analogously as in Definition~\ref{uncertainty_sets_n_1} with $\mathcal{M}_1$ now defined with probability measures on $[0,+\infty]$. \end{definition} 

Analogously to Notation~\ref{nota_d_u}, in the rest of this subsection, we will use the notation $(u_R,d_R,\pi_R)\in I$, where $I=[u,U]\times[d,D]\times[\pi,\Pi]$, for the parameters corresponding to  a measure $R\in\rrr$ .

The following lemma characterizes the robust no arbitrage property in the limit  in terms of the parameters of the limit model (including degenerate parameter values).
\begin{lemma}\label{par_NA_limit}
NA$(\mathcal{P})$ holds for the limit model if and only if the three following conditions are satisfied.
\begin{enumerate}
    \item $0\leq d<1<U\leq +\infty$,
    \item  $[\pi,\Pi]=\{0\}$ $\Rightarrow$ $D>1$,
    \item  $[\pi,\Pi]=\{1\}$ $\Rightarrow$ $u<1$.\end{enumerate}

  \end{lemma}
Lemma~\ref{par_NA_limit} and Theorem~\ref{thm_U0_finite} immediately have the following consequence which implies that  in the non-degenerate limit case of $U<+\infty$ and $[\pi,\Pi]\ne\{0\}$ and $\ne \{1\}$ we can characterize no arbitrage in the robust limit model exactly by NAA1 of the sequence of robust binomial models.
\begin{theorem}[non-degenerate limit model]\label{nondegen_limit}
Let $U<+\infty$ and $[\pi,\Pi]\neq \{0\}$ and $\neq\{1\}$. Then the conditions NA$(\mathcal{P})$ and NAA1$(\mathcal{P}^n)$ are equivalent.   
\end{theorem}

The easy proofs of  Lemma~\ref{par_NA_limit} and Theorem~\ref{nondegen_limit} are postponed to the end of this subsection. 

By choosing appropriate convergent sequences of parameters we will now give four examples that show that both conditions  NAA1$(\mathcal{P}^n)$, NAA2$(\mathcal{P}^n)$ {do, in general, not imply} NA$(\mathcal{P})$ in the limit model and, on the other hand, NA$(\mathcal{P})$ in the limit model {does, in general, not imply } NAA1$(\mathcal{P}^n)$ or NAA2$(\mathcal{P}^n)$. It is well known that NAA1$(\mathcal{P}^n)$ and NAA2$(\mathcal{P}^n)$ in general do not imply each other, hence we see that, in general, all three conditions are different.

\begin{example}[NAA1$(\mathcal{P}^n)$ holds, but there is an arbitrage in the limit model]
Choose convergent sequences $d^n, D^n, u^n, U^n, \pi^n, \Pi^n$ (which satisfy all conditions of all models $n$) such that the limits satisfy $\pi=\Pi=0$, $d<1=D<U\leq +\infty$. For $U<+\infty$ Theorem~\ref{thm_U0_finite} implies that NAA1$(\mathcal{P}^n)$ holds (as $d<1<U$). For $U=+\infty$ Theorem~\ref{thm_U0_infinite} implies that NAA1$(\mathcal{P}^n)$ holds (as $\lim\Pi^n=0$, $d<1$, $D<+\infty$).
As $1=D$ Lemma~\ref{par_NA_limit} implies that there has to exist arbitrage in the limit model. Indeed, it is also  easy to find directly, i.e., $X_1=-(S_1-S_0)=1-Y_1$. As $D=1$, $X_1\geq0$, $\mathcal{P}$-q.s. and $R(X_1>0)=R(Y_1=d)=1$ for $R$ with the parameters $d_R=d$, $\pi_R=0$ and $u_R\in[u,U]$ arbitrary. 

\end{example}

\begin{example}[NA$(\mathcal{P})$ in the limit model holds but there is an AA1$(\mathcal{P}^n)$]
Choose convergent sequences $d^n, D^n, u^n, U^n, \pi^n, \Pi^n$ (which satisfy all conditions of all models $n$) such that the limits satisfy $0<\Pi<1$, $U=+\infty$
and $d\leq D<1$. In particular we have that $d<1<U$ and $[\pi,\Pi]\ne \{0\}, \{1\}$. Hence, by Lemma~\ref{par_NA_limit}, NA$(\mathcal{P})$ holds.
On the other hand Theorem~\ref{thm_U0_infinite} implies that there exists an 
AA1$(\mathcal{P}^n)$ as $U=+\infty$ and $\lim \Pi^n=\Pi>0$.
\end{example}

\begin{example}[NAA2$(\mathcal{P}^n)$ holds, but there is an arbitrage in the limit model]
We choose appropriate convergent sequences $d^n, D^n, u^n, U^n, \pi^n, \Pi^n$ (which satisfy all conditions of all models $n$). Take, e.g.,  $d=D=\frac13$, $u=U=1$, $\pi=\Pi=\frac12$. Then the limit model ist just a standard Binomial model with up value $1$ and down value $\frac13$. It is well known that this model has arbitrage.  Choose $u^n=1-\frac1{n^2}$ and $U^n=1+\frac1{n}$ which is consistent with  the limits $u=U=1$. Check now the equivalent condition of Theorem~\ref{thm_U0_finite_AA2} for NAA2$(\mathcal{P}^n)$. It holds that $d=D=\frac13<1\leq u=1$. As $\pi=\frac12$, we have that (iia) holds. As $u=U=1$ we have to check (iid): clearly $1-u^n=\frac1{n^2}$ is $o(U^n-1)$, as $U^n-1=\frac1n$. Hence NAA2$(\mathcal{P}^n)$ holds.
\end{example}   

\begin{example}[NA$(\mathcal{P})$ in the limit model holds but there is an AA2$(\mathcal{P}^n)$]
We choose appropriate converging sequences $d^n, D^n, u^n, U^n, \pi^n, \Pi^n$ (which satisfy all conditions of all models $n$).
Choose, e.g., $d=\frac12$, $D=\frac32$, $u\geq1$, $U=2$, $\pi=\Pi=\frac12$. Then the limit model satisfies NA$(\mathcal{P})$. Indeed,
$d<1<U$ and $[\pi,\Pi]=\{\frac12\}\ne\{0\},\{1\}$ and so NA$(\mathcal{P})$ follows by Lemma~\ref{par_NA_limit}.
But as $D=\frac32>1$ we see by Theorem~\ref{thm_U0_finite_AA2} that there exists an AA2$(\mathcal{P}^n)$.

\end{example}

\begin{proof}[Proof of Lemma~\ref{par_NA_limit}]

For $R \in \mathcal{R}$ define $\mu^R=R \circ Y_1^{-1}$. 
Then, by Proposition 5.7 in 
\cite{blanch_carassus}
the following assertions are equivalent: 
\begin{enumerate}
\item[a)]
NA$(\mathcal{P})$ holds,
\item[b)] $1 \in \text{ri conv} (\cup_{R} \operatorname{supp} \mu^R)$, where $\text{ri conv}$ denotes the relative interior of the convex hull of a set. 
\end{enumerate}
The assertion of the lemma thus follows if we prove the equivalence of (i), (ii), (iii) with b). 
If $\Pi \neq 0$ and $\pi \neq 1$, then $\text{conv}(\cup_{R} \operatorname{supp} \mu^R) =[d, U]$. Therefore, 
$1 \in \text{ri conv} (\cup_{R} \operatorname{supp} \mu^R)$ is equivalent to $d < 1 < U$. 

If $\Pi=0$ (and thus $\pi=0$)
then $\text{conv}(\cup_{R} \operatorname{supp} \mu^R) =[d, D]$. Therefore, 
$1 \in \text{ri conv} (\cup_{R} \operatorname{supp} \mu^R)$ is equivalent to $d < 1 < D$. Assumption~\ref{ass34} this implies that $U>1$, too.

If $\pi=1$ (and thus $\Pi=1$)
then $\text{conv}(\cup_{R} \operatorname{supp} \mu^R) =[u, U]$. Therefore, 
$1 \in \text{ri conv} (\cup_{R} \operatorname{supp} \mu^R)$ is equivalent to $u < 1 < U$. Assumption~\ref{ass34}  implies that $d < 1$, too. 
\end{proof}

\begin{proof}[Proof of Theorem~\ref{nondegen_limit}]
 Assume NA$(\mathcal{P})$ holds. As, by assumption $[\pi,\Pi]\neq \{0\}$ and $\neq\{1\}$, we get by Lemma~\ref{par_NA_limit} that $d<1<U$. As, by assumption $U<+\infty$, Theorem~\ref{thm_U0_finite} implies NAA1$(\mathcal{P}^n)$. 

 Assume now that NAA1$(\mathcal{P}^n)$ holds. As $U<+\infty$ Theorem~\ref{thm_U0_finite} implies $d<1<U$. By assumption $[\pi,\Pi]\neq \{0\}$ and $\neq\{1\}$ hence Lemma~\ref{par_NA_limit} implies NA$(\mathcal{P})$.
\end{proof}

\subsection{The proofs of Theorems~\ref{thm_U0_finite}, \ref{thm_U0_finite_AA2}, \ref{thm_U0_infinite}, \ref{thm_U0_infinite_AA2}}\label{proofs_AA}

The proofs consist of various lemmas which follow here.

\begin{lemma}\label{AA1_all_sets} 
NAA1$(\pp^n)$ $\Leftrightarrow$ NAA1$(\rrr^n)$ and NAA2$(\pp^n)$ $\Leftrightarrow$NAA2$(\rrr^n)$.
\end{lemma}

\begin{proof}
    As $\rrr^n\subset \pp^n$ with the same polar sets it is clear that NAA1$(\pp^n)$ $\Rightarrow$ NAA1$(\rrr^n)$ and NAA2$(\pp^n)$ $\Rightarrow$ NAA2$(\rrr^n)$.

We will now show NAA1$(\rrr^n)$ $\Rightarrow$ NAA1$(\pp^n)$. Indeed, assume that NAA1$(\rrr^n)$ holds but there would exist an AA1$(\pp^n)$. That is, there exists a subsequence of markets, still denoted by $n$, such that there exists $H^n\in\mathbb{R}$ such that for $X^n_1=H^n(Y^n_1-1)$ we have that (i) $X^n_1\geq -c_n$, $\pp^n$-q.s., with $\lim_{n\to\infty}c_n=0$, and (ii) there exists $\alpha>0$ and $P^n\in\pp^n\setminus \rrr^n$ with $P^n(X^n_1\geq\alpha)\geq\alpha$, for all $n$.
It is clear that (i) implies that $X^n_1\geq -c_n$, $\rrr^n$-q.s., for all $n$. By the definition of $\pp^n$ we have that $P^n=\sum_{k=1}^{\infty}\lambda^n_kR^n_k$ with $R^n_k\in\rrr^n$, for all $k\geq1$, and $0\leq \lambda_k\leq1$, $\sum_{k=1}^{\infty}\lambda_k=1$. As $P^n(X^n_1\geq\alpha)\geq\alpha$, for all $n$, we have that, for each $n$, there exists $k_n\geq 1$, such that $\lambda^n_{k_n}>0$ and $R^n_{k_n}(X^n_1\geq\alpha)\geq\alpha$. (If not then (ii) is not satisfied.) For the sequence $R^n:=R^n_{k_n}$ this gives an AA1$(\rrr^n)$, a contradiction.

The proof of NAA2$(\rrr^n)$ $\Rightarrow$ NAA2$(\pp^n)$ follows analogously.
\end{proof}

\begin{lemma}\label{NAA1_1>d_0}
Assume that NAA1$(\pp^n)$ holds. Then $d<1$. 
\end{lemma}

\begin{proof}
By Assumption \ref{ass34}, 
we have that $d\leq D$. If $D<1$, it also follows that $d\leq D<1$. Assume now that $D\geq1$ and $d=1$. 
Define, for each $n$,  $\ep_n=1-d^n$. Note that $\ep_n>0$ since we assumed $d^n<1$. 
As $d=1$, we have that $\ep_n\to0$. 
Let $X^n_1=Y^n_1-1$. 
Observe that \ref{ass34}
implies  that $u_R\geq u^n> d^n$. By definition $d_R\geq d^n$. 
Therefore, for all $n$ and $R\in \rrr^n$ we have $R(X^n_1\geq -\ep_n)=1$, and hence, $X^n_1\geq -\ep_n$ $\pp^n$-q.s.
Define now the measure $P^n\in\rrr^n$ as follows: $P^n\circ(Y^n_1)^{-1}=\Pi^n\delta_{U^n}+(1-\Pi^n)\delta_{D^n}$. 
Now, we distinguish the following two cases: (1) $D>1$, and (2) $D=1$. In case  (1) observe that, by \ref{ass34},  $U^n>D^n$.  As $D>1$, by Assumption~\ref{par_limits} there exists $\alpha>0$ and $n_0\in\mathbb{N}$ such that, for all $n\geq n_0$, $U^n> D^n\geq 1+\alpha$.  It holds that $P^n(X^n_1\geq\alpha)=1$ for all $n$ as $U^n-1\geq\alpha$ and $D^n-1\geq\alpha$.
This gives an AA1 (even a strong asymptotic arbitrage), a contradiction. 
In case (2) we have that $d=D=1$ and $D^n\to1$, for $n\to\infty$. Assumption~\ref{avoid_riskless_limit} (ii) implies that $\limsup\Pi^n>0$ 
Choose a subsequence of markets (still denoted by $n$) with $\Pi^n\to\beta$, where $0<\beta\leq 1$. 
Further, note that Assumption~\ref{avoid_riskless_limit} (i) implies that $U>1=d$.
Therefore, by Assumption~\ref{par_limits}, there exists $\alpha>0$ and $n_0\in\mathbb{N}$ such that, for all $n\geq n_0$, $U^n\geq 1+\alpha$. It holds that $P^n(X^n_1\geq\alpha)=P^n(Y^n_1=U^n)=\Pi^n\to\beta>0$, for $n\to\infty$.  This gives an AA1, a contradiction. Therefore, in both cases $d<1$ has to hold.
\end{proof}

\begin{lemma}\label{NAA1_1<U_0}
Assume that NAA1$(\pp^n)$ holds. Then $U>1$. 
\end{lemma}

\begin{proof} 
  By Assumption \ref{ass34}, 
  for all $n$, we have that $u^n< U^n$. Therefore, if $u>1$, it holds that $U\ge u>1$. Assume now that $u\leq1$ and $U=1$. 
 Define, for each $n$,  $\ep_n=U^n-1$. Note that $\ep_n>0$ since we assumed $U^n>1$.
 As $U=1$, we have that $\ep_n\to0$, for $n\to\infty$. 
Let $X^n_1=-(Y^n_1-1)$. 
Observe that Assumption \ref{ass34}
implies that, for each $R\in\rrr^n$ and $n\geq 1$, we have that $d_R\leq D^n< U^n=1+\ep_n$. By definition $u_R\leq U^n$. Therefore, for all $n$, $R(X^n_1\geq -\ep_n)=1$, for all $R\in\rrr^n$, hence $\pp^n$-q.s.  Now, choose the measure $P^n\in\rrr^n$ such that $P^n\circ(Y^n_1)^{-1}=\pi^n\delta_{u^n}+(1-\pi^n)\delta_{d^n}$.
We distinguish the following two cases: (1) $u<1$, (2) $u=1$. In case  (1) observe that, by \ref{ass34},
$d^n<u^n$.  As $u<1$, by Assumption~\ref{par_limits} there exists $\alpha>0$ and $n_0\in\mathbb{N}$ such that, for all $n\geq n_0$, $d^n\leq u^n\leq 1-\alpha$.  It holds that $P^n(X^n_1\geq\alpha)=1$ for all $n$ as $1-u^n\geq\alpha$ and $1-d^n\geq\alpha$.
This gives an AA1 (even a strong asymptotic arbitrage), a contradiction. 
In case (2) we have that $u=U=1$ and $u^n\to1$, for $n\to\infty$. Assumption~\ref{avoid_riskless_limit} (iii) 
implies that $\liminf\pi^n<1$. Choose a subsequence of markets (still denoted by $n$) with $\pi^n\to\beta$, where $0\leq\beta<1$. Further, note that Assumption~\ref{avoid_riskless_limit} (i) implies that $U=1>d$ 
Therefore, by Assumption~\ref{par_limits}, there exists $\alpha>0$ and $n_0\in\mathbb{N}$ such that, for all $n\geq n_0$, $d^n\leq 1-\alpha$.   It holds that $P^n(X^n_1\geq\alpha)=P(Y^n_1=d^n)=1-\pi^n\to1-\beta>0$, for $n\to\infty$.  This gives an AA1, a contradiction. Therefore in both cases $U>1$ has to hold.
\end{proof}

\begin{lemma}\label{NAA1_reverse_finiteU0}
 Let $d<1<U<+\infty$. Then NAA1$(\rrr^n)$ holds.
 \end{lemma}

 \begin{proof}[Proof of Lemma~\ref{NAA1_reverse_finiteU0}]
     Suppose, by contradiction, that there exists an AA1$(\rrr^n)$,  that is, there exists a subsequence of markets, still denoted by $n$, such that there exists $H^n\in\mathbb{R}$ such that for $X^n_1=H^n(Y^n_1-1)$ we have that (i) $X^n_1\geq -c_n$, $\rrr^n$-q.s., with $\lim_{n\to\infty}c_n=0$, and (ii) there exists $\alpha>0$ and $P^n\in\rrr^n$ with $P^n(X^n_1\geq\alpha)\geq\alpha$, for all $n$. As $d<1<U<+\infty$, there exists $n_0$ and $0<\beta<1$, $0<\gamma<1$, such that, for all $n\geq n_0$, we have that $0<d^n<1-\beta$ and $1+\gamma<U^n<2U<+\infty$. As $c_n\to0$, for each $\ep>0$ there exists $n(\ep)\geq n_0$ such that for all $n\geq n(\ep)$, $c_n\max(\frac{2U-1}{\beta},\frac{1}{\gamma})<\ep$. W.l.o.g. we can assume that $H^n\ne 0$, for all $n$, as $H^n=0$ does not contribute to the AA1. Define $N_1=\{n\geq 1: H^n>0\}$ and $N_2=\{n\geq 1: H^n<0\}$. Both sets $N_1$, $N_2$, can be unbounded, at least one of them has to be unbounded. If one of the two sets would not be unbounded then it does not contribute to the AA1 substantially as only $n\to\infty$ plays an important role. Therefore, assume that $N_1$, $N_2$ both are unbounded. Let $n\in N_1$ with $n\geq n_0$. By (i), for each $R\in\rrr^n$, it has to hold that $R(X^n_1\geq-c_n)=1$. As $H^n>0$, the smallest possible value of $X^n_1$ appears for $R^n\in\rrr^n$ with $d_{R^n}=d^n$. Then Assumption~\ref{ass_blan_car_1step} implies that $R^n(X^n_1=H^n(d^n-1))=1-\pi_{R^n}\geq 1-\pi^n>0$. As $0<d^n<1-\beta$ it has to hold that $H^n(d^n-1)=-H^n(1-d^n)\geq-c_n$ and thus
\begin{equation}\label{Hn+1}
 0<H^n\leq\frac{c_n}{1-d^n}< \frac{c_n}{\beta}. 
\end{equation}
Further, observe that, for all $R\in \rrr^n$, we have that $R(X^n_1\leq H^n(U^n-1))=1$ and therefore, as $1<U^n<2U$ and by (\ref{Hn+1}),
\begin{equation}\label{Hn+2}
 X^n_1\leq H^n(U^n-1)< c_n\frac{2U-1}{\beta}, \quad\text{$\rrr^n$-q.s.} 
\end{equation}
Let $\ep<\alpha$. Then, for $n\geq n(\ep)$, $c_n\frac{2U-1}{\beta}<\ep$. Hence, for  $n\in N_1$, $n\geq n(\ep)$, by (\ref{Hn+2}), $\{X^n_1\geq \alpha\}$ is an $\rrr^n$-polar set and there cannot exist $P^n\in\rrr^n$ as in (ii) for $n\in N_1$.

Let now $n\in N_2$ with $n\geq n_0$. By (i), for each $R^n\in\rrr^n$, it has to hold that $R^n(X^n_1\geq-c_n)=1$. As $H^n<0$, the smallest possible value of $X^n_1$ appears for $R^n\in\rrr^n$ with $u_{R^n}=U^n$. Then, by Assumption~\ref{ass_blan_car_1step}, $R^n(X^n_1=-|H^n|(U^n-1))=\pi_{R^n}\geq\pi^n>0$. Hence it has to hold that $-|H^n|(U^n-1)\geq-c_n$ and thus, as $1+\gamma<U^n$, 
\begin{equation}\label{Hn-1}
 0<|H^n|\leq\frac{c_n}{U^n-1}< \frac{c_n}{\gamma}. 
\end{equation}
Further, observe that, $X^n_1=-|H^n|(Y^n-1)$ is $<0$ if $Y^n-1>0$ and, if $Y^n-1\leq0$, then $X^n_1=|H^n|(1-Y^n_1)\geq 0$ with maximal value $|H^n|(1-d^n)$. Hence, for all $R\in \rrr^n$, we have that $R(X^n_1\leq |H^n|(1-d^n))=1$ and therefore, as $0<d^n<1-\beta$ and by (\ref{Hn-1}),
\begin{equation}\label{Hn-2}
 X^n_1\leq |H^n|(1-d^n)< c_n/\gamma, \quad\text{$\rrr^n$-q.s.} 
\end{equation}

Let  $\ep<\alpha$ Then, for  $n\in N_2$, $n\geq n(\ep)$, $\frac{c_n}{\gamma}<\ep$ and by (\ref{Hn-2}), $\{X^n_1\geq \alpha\}$ is an $\rrr^n$-polar set and there cannot exist $P^n\in\rrr^n$ as in (ii) for $n\in N_2$. Therefore there cannot exist an AA1($\rrr^n$).
 \end{proof}

\begin{proof}[Proof of Theorem~\ref{thm_U0_finite}]
(i) $\Rightarrow$ (ii) follows by Lemma~\ref{NAA1_1>d_0} and Lemma~\ref{NAA1_1<U_0}.  (ii) $\Rightarrow$ (i) follows by Lemma~\ref{NAA1_reverse_finiteU0} and Lemma~\ref{AA1_all_sets}. (i) $\Leftrightarrow$ (iii) then follows by Theorem~\ref{lfmftap}. This shows that (i), (ii), (iii) are equivalent.
\end{proof}

\begin{lemma}\label{NAA2_D_0<=1}
Assume that NAA2$(\pp^n)$ holds. Then $D\leq1$.
\end{lemma}

\begin{proof}
     Suppose we would have $1<D\leq +\infty$. Let $X^n_1=Y^n_1-1$. Then, for each $R\in\rrr^n$ we have that $Y^n_1$ can only take the values $u_R>0$ and $d_R>0$, with $u_R\in[u^n,U^n]$ and $d_R\in[d^n,D^n]$. As both values are strictly positive since we assumed $0<d^n<u^n$, see Assumption \ref{ass34}, we have that $R(X^n_1>-1)=1$, for all $R\in\rrr^n$, hence $X^n_1\geq-1$, $\pp^n$-q.s. Let $P^n\circ(Y^n_1)^{-1}=\Pi^n\delta_{U^n}+(1-\Pi^n      )\delta_{D^n}$. As $D^n\to D>1$ there exists $\alpha>0$ and $n_0\in\mathbb{N}$ such that, for all $n\geq n_0$, $D^n\geq 1+\alpha$. Since $U^n> D^n$, by Assumption \ref{ass34}, 
$$P^n(X^n_1\geq\alpha)= 1.$$ This gives an AA2.
\end{proof}

\begin{lemma}\label{NAA1+U_0<inf_u_0>=1}
Assume that $U<+\infty$ and NAA2$(\pp^n)$ holds. Then $u\geq1$.
\end{lemma}

\begin{proof} We proceed similarly as in the proof of Lemma~\ref{NAA2_D_0<=1}. 
     Suppose we would have $1>u\geq 0$. Let $X^n_1=-\frac1{U^n}(Y^n_1-1)$. Then,
  as $d_R\leq D^n<U^n$, by Assumption~\ref{avoid_riskless_limit} (i), for all $n$, and $u_R\leq U^n$ we have that $\frac{-a+1}{U^n}\geq-1$, for $a=d_R, u_R$. Hence  
     $R(X^n_1\geq-1)=1$, for all $R\in\rrr^n$, which implies $X^n_1\geq-1$, $\pp^n$-q.s. Let $P^n\circ(Y^n_1)^{-1}=\Pi^n\delta_{u^n}+(1-\Pi^n)\delta_{d^n}$. 
   As $d^n<u^n$, $u<1$, $U<+\infty$, and $U^n\to U$
     there exists $0<\alpha<1$ and $n_0\in\mathbb{N}$ such that, for all $n\geq n_0$, $d^n<u^n\leq 1-\alpha$ and $U^n\leq 2U<+\infty$. Define $\widetilde{\alpha}=\frac{\alpha}{2U}>0$. Therefore, for all $n\geq n_0$,
$$P^n(X^n_1\geq\widetilde{\alpha})= 1.$$ This gives an AA2.
\end{proof}

\begin{lemma}\label{AA2liminf_prob1}
Assume that $U<+\infty$, $d<1$ and NAA2$(\pp^n)$ holds. Then it follows that $\liminf\pi^n>0$.
\end{lemma}

\begin{proof}
    Assume that on the contrary, $\liminf \pi^n=0$. Choose a subsequence (again denoted by $n$) such that $\lim_{n\to\infty}\pi^n=0$. 
By assumption, $d<1$ and $U<+\infty$. Hence there exists $\alpha\in(0,1)$  and  $n_0\in\mathbb{N}$  such that $d^n\leq 1-\alpha$ and  $U^n\leq2U<\infty$ for all $n\geq n_0$. Define $X^n_1$ as in the proof of Lemma~\ref{NAA1+U_0<inf_u_0>=1}, that is, $X^n_1=-\frac{1}{U^n}\left(Y^n_1-1\right)$. This will give an AA2. Indeed, as in the proof of Lemma~\ref{NAA1+U_0<inf_u_0>=1} we have that
$X^n_1\geq -1$, ${\pp}^n$-q.s. Define $P^n$ as follows: $P^n\circ {Y^n_1}^{-1}=\pi^n\delta_{U^n}+(1-\pi^n)\delta_{d^n}$ and observe that $\frac1{U^n}(-d^n+1)\geq \frac{\alpha}{2U}=:\widetilde{\alpha}>0$, for all $n\geq n_0$. Therefore 
$$P^n(X^n_1\geq\widetilde{\alpha})=P(Y^n=d^n)=1-\pi^n\to 1,$$
an AA2, a contradiction. 
\end{proof}

\begin{lemma}\label{AA2liminf_prob4}
Assume that  $d=D=1$ and NAA2$(\pp^n)$ holds. Then  it follows $\max(D^n-1,0)=o(1-d^n)$.
\end{lemma}

\begin{proof} Assume that $\max(D^n-1,0)=o(1-d^n)$ does not hold. We can choose a subsequence (still denoted by $n$) such that $D^n-1>0$, for all $n$,  and $\lim_{n\to\infty}\frac{D^n-1}{1-d^n}=2\alpha$, for some $\alpha>0$.
As $D=d=1$, Assumption~\ref{avoid_riskless_limit} (i) implies $U>1$. Then there exists $n_0$ and $0<\beta<1$  such that, for all $n\geq n_0$, $U^n\geq 1+\beta$ and $\frac{D^n-1}{1-d^n}\geq\alpha$. Define $X^n_1=\frac{1}{1-d^n}(Y^n_1-1)$.  Then, for each $R\in\rrr^n$,
$X^n_1\geq \frac{1}{1-d^n}(d^n-1)=-1$, hence $X^n_1\geq-1$,  $\pp^n$-q.s. Let $P^n\circ(Y^n_1)^{-1}=\Pi^n\delta_{U^n}+(1-\Pi^n)\delta_{D^n}$.  As,  $U^n>D^n$ by Assumption \ref{ass34}, and as $\frac{D^n-1}{1-d^n}\geq\alpha$, for all $n\geq n_0$, we have that 
$$P^n(X^n_1\geq\alpha)=1,$$
for all $n\geq n_0$, which gives an AA2.
\end{proof}

\begin{lemma}\label{AA2liminf_prob3}
Assume that  $U>1$ and NAA2$(\pp^n)$ holds. Then  $\limsup\Pi^n<1$.
\end{lemma}

\begin{proof}
Assume that  $\limsup \Pi^n=1$. Choose a subsequence (again denoted by $n$) such that $\lim_{n\to\infty}\Pi^n=1$. By assumption $U>1$. Therefore there exists $\alpha\in(0,1)$  and  $n_0\in\mathbb{N}$  such that $U^n\geq 1+\alpha$  for all $n\geq n_0$. Define  $X^n_1=Y^n_1-1$ (as in the proof of Lemma~\ref{NAA2_D_0<=1}). As there we see that $X^n_1>-1$, $\pp^n$-q.s.
Let $P^n\circ {Y^n_1}^{-1}=\Pi^n\delta_{U^n}+(1-\Pi^n)\delta_{d^n}$. Then, for $n\geq n_0$ we have that 
$$P^n(X^n_1\geq\alpha)=P^n(Y^n_1=U^n)=\Pi^n\to 1,$$
for $n\to\infty$. This gives an AA2, a contradiction.
\end{proof}

\begin{lemma}\label{AA2liminf_prob2}
Assume that  $u=U=1$ and NAA2$(\pp^n)$ holds. Then  it follows $\max(1-u^n, 0)=o(U^n-1)$.
\end{lemma}

\begin{proof}
Assume that $\max(1-u^n, 0)=o(U^n-1)$ does not hold. We can choose a subsequence (still denoted by $n$) such that $1-u^n>0$, for all $n$,  and $\lim_{n\to\infty}\frac{1-u^n}{U^n-1}=2\alpha$, for some $\alpha>0$.
As $U=u=1$, Assumption~\ref{avoid_riskless_limit} (i) implies $d<1$. Then there exists $n_0$ and $0<\beta<1$  such that, for all $n\geq n_0$, $d^n\leq 1-\beta$ and $\frac{1-u^n}{U^n-1}\geq\alpha$, for all $n\geq n_0$.  Define $X^n_1=-\frac{1}{U^n-1}(Y^n_1-1)=\frac{1}{U^n-1}(1-Y^n_1)$.  Then, for each $R\in\rrr^n$,
$X^n_1\geq \frac{1}{U^n-1}(1-U^n)=-1$, hence $X^n_1\geq-1$,  $\pp^n$-q.s. Let $P^n\circ(Y^n_1)^{-1}=\Pi^n\delta_{u^n}+(1-\Pi^n)\delta_{d^n}$.  As, by \ref{ass34}, $u^n>d^n$, and as $\frac{1-d^n}{U^n-1}>\frac{1-u^n}{U^n-1}\geq\alpha$, for all $n\geq n_0$, we have that 
$$P^n(X^n_1\geq\alpha)=1,$$
for all $n\geq n_0$, which gives an AA2.
\end{proof}

\begin{lemma}\label{NAA2_reverse_finiteU0}
 Let $U<+\infty$ and assume that (ii) of Theorem~\ref{thm_U0_finite_AA2} holds. Then NAA2$(\rrr^n)$ holds.
 \end{lemma}

 \begin{proof}[Proof of Lemma~\ref{NAA2_reverse_finiteU0}]
Suppose, to the contrary, that there would exist an AA2$(\rrr^n)$,  that is, there exists a subsequence of markets, still denoted by $n$, such that there exists $H^n\in\mathbb{R}$ such that for $X^n_1=H^n(Y^n_1-1)$ we have that (i) $X^n_1\geq -1$, $\rrr^n$-q.s., and (ii) there exists $\alpha>0$ and $P^n\in\rrr^n$ with $P^n(X^n_1\geq\alpha)\geq1-\ep_n$,  where $\lim_{n\to\infty}\ep_n=0$.      
Define, similarily as in the proof of Lemma~\ref{NAA1_reverse_finiteU0},  $N_1=\{n\geq 1: H^n>0\}$ and $N_2=\{n\geq 1: H^n<0\}$ and, as there, assume  that $N_1$, $N_2$ both are unbounded. 

Let $n\in N_1$. 
As $H^n>0$, the smallest possible value of $X^n_1$ appears for $R^n\in\rrr^n$ with $d_{R^n}=d^n$. Then $R^n(X^n_1=H^n(d^n-1))=1-\pi_{R^n}\geq 1-\pi^n>0$ by Assumption~\ref{ass_blan_car_1step}. Hence, it has to hold that $H^n(d^n-1)=-H^n(1-d^n)\geq-1$ and thus
\begin{equation}\label{Hn+1A2}
 0<H^n\leq\frac{1}{1-d^n}. 
\end{equation}
We will distinguish the following cases: (1) $d<D\leq1$ and (2) $d=D=1$.

Case (1): As $d<1$ there exists $0<\beta<1$ and $n_0$ such that, for all $n\geq n_0$, $d^n\leq 1-\beta$. As $D\leq 1$, for each $\ep>0$ there exists $n(\ep)\geq n_0$, such that, for all $n\geq n(\ep)$,  we have that $D^n\leq 1+\ep$. Observe that, for each $R^n\in\rrr^n$, with $d_{R^n}\in[d^n,D^n]$, for all $n\geq n(\ep)$, we have that 
$$\{Y^n_1=d_{R^n}\}\subseteq \{X^n_1\leq \frac{\ep}{\beta}\},$$
as $H^n(d_{R^n}-1)\leq H^n(D^n-1)\leq \frac{\ep}{\beta}$, by (\ref{Hn+1A2}).  Hence, (ii) cannot happen on the down parts, as, for $\ep$ such that $\frac{\ep}{\beta}<\alpha$, for all $n\geq n(\ep)$ and all $R^n\in\rrr^n$, $\{X^n_1\geq\alpha\}\cap \{Y^n_1=d_{R^n}\}=\emptyset$. 

Now we will look at the up parts. Assume first that $U>1$. Then, by (ii) (c), $\limsup\Pi^n<1$, hence there exists $0<\gamma\leq 1$ and $n_1\geq n_0$ such that, for all $n\geq n_1$, $\Pi^n\leq 1-\gamma$. Then, for $\ep$ with $\frac{\ep}{\beta}<\alpha$, for all $n\geq\max(n_1, n(\ep))$, we have that
$$\sup_{R^n\in\rrr^n}R^n(X^n_1\geq\alpha)\leq \sup_{R^n\in\rrr^n}R^n(Y^n_1=u_{R^n})=\sup_{R^n\in\rrr^n}\pi_{R^n}\leq \Pi^n\leq 1-\gamma<1,$$
therefore (ii) cannot hold for large $n$.

Assume now that $U=1$. Then, for all $R\in\rrr^n$, we have that $u_{R^n}\leq U^n$. As $U=1$, there exists $n_2\geq n_0$ such that, for all $n\geq n_2$, $U^n-1< \alpha\beta $, and therefore, by (\ref{Hn+1A2}),
$$X^n_1\leq H^n(U^n-1)<\frac{\alpha\beta}{\beta}=\alpha,\quad\rrr^n\text{-q.s.},$$
for all $n\geq n_2$, therefore (ii) cannot hold for large $n$. 

Case (2): as $d=D=1$, by Assumption~\ref{avoid_riskless_limit} (i), we have that $U>1$. Then, again by (ii)(c), $\limsup\Pi^n<1$ and by (ii)(b) $\frac{\max(D^n-1,0)}{1-d^n}\to0$ There exists $0<\gamma\leq 1$ and $n_0$ such that, for all $n\geq n_0$, $\Pi^n\leq 1-\gamma$. Moreover, for each $\ep>0$, there exists $n(\ep)\geq n_0$ such that $\frac{\max(D^n-1,0)}{1-d^n}<\ep$, for all $n\geq n(\ep)$. Then, for all $n\geq n_0$, we have that
$$\sup_{R^n\in\rrr^n}R^n(Y^n_1=u_{R^n})\leq \Pi^n\leq 1-\gamma<1,$$
therefore the probability of the up parts is not enough to satisfy (ii). For the down parts, observe that, for all $R^n\in\rrr^n$, we have that $d_{R^n}\leq D^n$. Choose $\ep<\alpha$, then, for all $n\geq n(\ep)$, we have that  
$$\{X^n_1\geq\alpha\}\cap \{Y^n_1=d_{R^n}\}=\emptyset,$$
as $H^n(d_{R^n}-1)\leq H^n(D^n-1)\leq \frac{\max(D^n-1,0)}{1-d^n}<\ep<\alpha$, as $H^n\leq \frac1{1-d^n}$ by (\ref{Hn+1A2}).
Hence there cannot exist $P^n\in\rrr^n$ as in (ii) for large $n\in N_1$.

Let now $n\in N_2$. The proof will be very similar to the proof for $n\in N_1$.
As $H^n<0$ the smallest possible value of $X^n_1$ appears for $R^n\in\rrr^n$ with $u_{R^n}=U^n$. 
Then $R^n(X^n_1=-|H^n|(U^n-1))=\pi_{R^n}\geq \pi^n>0$ by Assumption~\ref{ass_blan_car_1step}.
Hence, it has to hold that $-|H^n|(U^n-1)\geq-1$ and thus,  
\begin{equation}\label{Hn-1A2}
 0<|H^n|\leq\frac{1}{U^n-1}. 
\end{equation}
We will distinguish the following cases: (1) $1\leq u<U<+\infty$ and (2) $u=U=1$.

Case (1): As $U>1$ there exists $0<\beta<1$ and $n_0$ such that, for all $n\geq n_0$, $U^n\geq 1+\beta$. As $u\geq 1$, for each $\ep>0$ there exists $n(\ep)\geq n_0$, such that, for all $n\geq n(\ep)$,  we have that $u^n\geq 1-\ep$. Observe that, for each $R^n\in\rrr^n$, with $u_{R^n}\in[u^n,U^n]$, for all $n\geq n(\ep)$, we have that 
$$\{Y^n_1=u_{R^n}\}\subseteq \{X^n_1\leq \frac{\ep}{\beta}\},$$
as 
$$H^n(u_{R^n}-1)=|H^n|(-u_{R^n}+1)\leq |H^n|(-u^n+1)\leq\frac{\ep}{\beta},$$ by (\ref{Hn-1A2}) because 
$-u^n+1\leq\ep$ and $U^n-1\geq\beta$.  

Hence, (ii) cannot happen on the up parts, as, for $\ep$ such that $\frac{\ep}{\beta}<\alpha$, as for all $n\geq n(\ep)$ and all $R^n\in\rrr^n$, $\{X^n_1\geq\alpha\}\cap \{Y^n_1=u_{R^n}\}=\emptyset$. 

Now we will look at the down parts. Assume first that $d<1$. Then, by (ii)(a), $\liminf\pi^n>0$, hence there exists $0<\gamma\leq 1$ and $n_1\geq n_0$ such that, for all $n\geq n_1$, $\pi^n\geq \gamma$. Then, for $\ep$ with $\frac{\ep}{\beta}<\alpha$, for all $n\geq\max(n_1, n(\ep))$, we have that
$$\sup_{R^n\in\rrr^n}R^n(X^n_1\geq\alpha)\leq \sup_{R^n\in\rrr^n}R^n(Y^n_1=d_{R^n})=\sup_{R^n\in\rrr^n}(1-\pi_{R^n})\leq (1-\pi^n)\leq 1-\gamma<1,$$
therefore (ii) cannot hold for large $n$.

Assume now that $d=1$. Then, for all $R\in\rrr^n$, we have that $d_{R^n}\geq d^n$. As $d=1$, there exists $n_2\geq n_0$ such that, for all $n\geq n_2$, $1-d^n< \alpha\beta $, and therefore, by (\ref{Hn-1A2}), as $\frac{1}{U^n-1}\leq\frac1{\beta},$
$$X^n_1\leq |H^n|(1-d^n)<\frac{\alpha\beta}{\beta}=\alpha,\quad\rrr^n\text{-q.s.},$$
for all $n\geq n_2$, therefore (ii) cannot hold for large $n$. 

Case (2): as $u=U=1$, by Assumption~\ref{avoid_riskless_limit} (i), we have that $d<1$. Then, again by (ii)(a), $\liminf\pi^n>0$ and by (ii)(d) $\frac{\max(1-u^n,0)}{U^n-1}\to 0$. There exists $0<\gamma\leq 1$ and $n_0$ such that, for all $n\geq n_0$, $\pi^n\geq \gamma$. Moreover, for each $\ep>0$, there exists $n(\ep)\geq n_0$ such that $\frac{\max(1-u^n,0)}{U^n-1}<\ep$, for all $n\geq n(\ep)$. Then, for all $n\geq n_0$, we have that
$$\sup_{R^n\in\rrr^n}R^n(Y^n_1=d_{R^n})\leq (1-\pi^n)\leq 1-\gamma<1,$$
therefore the probability of the down parts is not enough to satisfy (ii). For the up parts, observe that, for all $R^n\in\rrr^n$, we have that $u_{R^n}\geq u^n$. Choose $\ep<\alpha$, then, for all $n\geq n(\ep)$, we have that  
$$\{X^n_1\geq\alpha\}\cap \{Y^n_1=u_{R^n}\}=\emptyset,$$
as $|H^n|(-u_{R^n}+1)\leq |H^n|(-u^n+1)\leq \frac{\max(1-u^n,0)}{U^n-1}<\ep<\alpha$, as $|H^n|\leq \frac1{U^n-1}$ by (\ref{Hn-1A2}).
Hence there cannot exist $P^n\in\rrr^n$ as in (ii) for all large $n\in N_2$.

This shows that NAA2$(\rrr^n)$ has to hold. 
 \end{proof}

\begin{proof}[Proof of Theorem~\ref{thm_U0_finite_AA2}]
    (i) $\Rightarrow$ (ii) follows by Lemma~\ref{NAA2_D_0<=1} and Lemma~\ref{NAA1+U_0<inf_u_0>=1}. Further, (a) follows by Lemma~\ref{AA2liminf_prob1}, (b) follows by Lemma~\ref{AA2liminf_prob4}, (c) follows by Lemma~\ref{AA2liminf_prob3}, (d) follows by Lemma~\ref{AA2liminf_prob2}.
    
    (ii) $\Rightarrow$ (i) follows by Lemma~\ref{NAA2_reverse_finiteU0} and Lemma~\ref{AA1_all_sets}.
         (i) $\Leftrightarrow$ (iii)   then follows by 
          by Theorem~\ref{lfmftap2}. This shows that (i), (ii), (iii) are equivalent.
         \end{proof}

\begin{lemma}\label{U0_D0_infty_AA1}
Assume that $U=+\infty$ and NAA1$(\pp^n)$ holds. Then $\limsup\Pi^n=0$ and $D<+\infty$. 
\end{lemma}

\begin{proof}[Proof of Lemma~\ref{U0_D0_infty_AA1}]
     Assume first that $\limsup\Pi^n=\beta>0$. Take a subsequence (still denoted by $n$) such that $\lim_{n\to\infty}\Pi^n=\beta$. Define $X^n_1=\frac1{U^n}(Y^n_1-1)$. Then we have that ${X}^n_1\geq -\frac1{U^n}=:-\ep_n$, $\pp^n$-q.s., with $\ep_n\to0$. Define
     $P^n\circ (Y^n_1)^{-1}=\Pi^n\delta_{U^n}+(1-\Pi^n)\delta_{D^n}$. It holds that $P^n(X^n_1\geq 1-\ep_n)\geq P^n(Y^n_1=U^n)=\Pi^n\geq\frac{\beta}2>0,$ for all large $n$. This gives an AA1, a contradiction.
     
     Assume now that $D=+\infty$. Define $X^n_1=\frac1{D^n}(Y^n_1-1)$. Then we have that ${X}^n_1\geq -\frac1{D^n}=:-\ep_n$, $\pp^n$-q.s.,with $\ep_n\to0$. Define
     $P^n\circ (Y^n_1)^{-1}=\Pi^n\delta_{U^n}+(1-\Pi^n)\delta_{D^n}$.
    As  $D^n<U^n$ by \ref{ass34},  it follows that, for large $n$,  $P^n(X^n_1\geq 1-\ep_n)=1$. This gives an AA1, even a strong asymptotic arbitrage (and thus also an AA2), a contradiction.
\end{proof}

\begin{lemma}\label{NAA1_reverse_infiniteU0}
 Let $d<1$, $D<+\infty=U$ and $\limsup\Pi^n=0$. Then NAA1$(\rrr^n)$ holds.
 \end{lemma}

\begin{proof}
     Suppose, to the contrary, that there would exist an AA1$(\rrr^n)$,  that is, there exists a subsequence of markets, still denoted by $n$, such that there exists $H^n\in\mathbb{R}$ such that for $X^n_1=H^n(Y^n_1-1)$ we have that (i) $X^n_1\geq -c_n$, $\rrr^n$-q.s., with $\lim_{n\to\infty}c_n=0$, and (ii) there exists $\alpha>0$ and $P^n\in\rrr^n$ with $P^n(X^n_1\geq\alpha)\geq\alpha$, for all $n$.      
  
     As $d<1$ and $D<+\infty$ there exists $n_0$  and $0<\beta<1$ such that, for all $n\geq n_0$, we have that $d^n< 1-\beta$ and $D^n\leq D+\frac{\alpha}2<+\infty$.  Moreover, as $c_n\to0$, $\Pi^n\to0$  and $U=+\infty$, for each $\ep>0$, there exists $n(\ep)\geq n_0$ such that for all $n\geq n(\ep)$, $\Pi^n<\ep$ and $\frac1{U^n-1}<\ep$ and $c_n\cdot \max(\frac{D+\frac{\alpha}2-1}{\beta},1)<\ep$. 
      Define, as in the proof of Lemma~\ref{NAA1_reverse_finiteU0},  $N_1=\{n\geq 1: H^n>0\}$ and $N_2=\{n\geq 1: H^n<0\}$ and, as there, assume  that $N_1$, $N_2$ both are unbounded. Let $n\in N_1$ with $n\geq n_0$. By (i), for each $R\in\rrr^n$, it has to hold that $R(X^n_1\geq-c_n)=1$. As $H^n>0$, as in the proof of Lemma~\ref{NAA1_reverse_finiteU0}, we get
\begin{equation}\label{Hn+1infty}
 0<H^n\leq\frac{c_n}{1-d^n}< \frac{c_n}{\beta}. 
\end{equation}
Further, observe that, for all $R\in \rrr^n$, and $n\geq n(\ep)$, we have that $R(X^n_1\leq H^n(D^n-1))=1-\pi_{R}> 1-\ep$ as $\pi_{R}\leq \Pi^n<\ep$. Thus, and because $D^n<D+\frac{\alpha}2$ and by (\ref{Hn+1infty}) we have that
\begin{align}\label{Hn+2infty}
 R\biggl( X^n_1\leq c_n\frac{D+\frac{\alpha}2-1}{\beta}\biggr) >1-\ep\text{, for all $R\in\rrr^n$, } &\text{if $D^n>1$}\\
 R(X^n_1\leq 0)>1-\ep\text{, for all $R\in\rrr^n$, } &\text{if $D^n\leq 1$}.\nonumber
\end{align}
Let $\ep<\alpha$. Then, for $n\geq n(\ep)$,  $c_n\cdot\max(\frac{D+\frac{\alpha}2-1}{\beta},1)<\ep<\alpha$. Hence, for  $n\in N_1$, $n\geq n(\ep)$, by (\ref{Hn+2infty})  it follows that $R(X^n_1\geq \alpha)<\ep$, for all $n\geq n(\ep)$, and there cannot exist $P^n\in\rrr^n$ as in (ii) for all large $n\in N_1$.

Let now $n\in N_2$ with $n\geq n_0$. By (i), for each $R\in\rrr^n$, it has to hold that $R(X^n_1\geq-c_n)=1$. As $H^n<0$, the smallest possible value of $X^n_1$ appears for $R^n\in\rrr^n$ with $u_{R^n}=U^n$. Then $R^n(X^n_1=-|H^n|(U^n-1))=\pi_{R^n}\geq\pi^n>0$ by Assumption~\ref{ass_blan_car_1step}. Hence it has to hold that $-|H^n|(U^n-1)\geq-c_n$ and thus, for $n\geq n(\ep)$,
\begin{equation}\label{Hn-1infty}
 0<|H^n|\leq\frac{c_n}{U^n-1}<\ep^2. 
\end{equation}
Further, observe that, $X^n_1=-|H^n|(Y^n-1)$ is $<0$ if $Y^n-1>0$ and, if $Y^n-1\leq0$, then $X^n_1=|H^n|(1-Y^n_1)\geq 0$ with maximal value $|H^n|(1-d^n)$. Hence, for all $R\in \rrr^n$, we have that $R(X^n_1\leq |H^n|(1-d^n))=1$ and therefore, as $0<d^n<1$ and by (\ref{Hn-1infty}), for all $n\geq n(\ep)$,
\begin{equation}\label{Hn-2infty}
 X^n_1\leq |H^n|(1-d^n)< \frac{c_n}{U^n-1}<\ep^2, \quad\text{$\rrr^n$-q.s.} 
\end{equation}
Let  $\ep^2<\alpha$ Then, for  $n\in N_2$, $n\geq n(\ep)$, by (\ref{Hn-2infty}), $\{X^n_1\geq \alpha\}$ is an $\rrr^n$-polar set and there cannot exist $P^n\in\rrr^n$ as in (ii) for all large $n\in N_2$. Therefore there cannot exist an AA1($\rrr^n$).
 \end{proof}

\begin{proof}[Proof of Theorem~\ref{thm_U0_infinite}]
(i) $\Rightarrow$ (ii) follows by Lemma~\ref{U0_D0_infty_AA1} and Lemma~\ref{NAA1_1>d_0}.  (ii) $\Rightarrow$ (i) follows by Lemma~\ref{NAA1_reverse_infiniteU0}.
Again, (i) $\Leftrightarrow$ (iii)  follows by Theorem~\ref{lfmftap}. This
 shows that (i), (ii), (iii) are equivalent.
\end{proof}

\begin{lemma}\label{NAA2_reverse_infiniteU0}
 Let $U=+\infty$ and assume that $\limsup{\Pi^n}<1$, $D\leq 1$ and, if $d=D=1$, then $\max(D^n-1,0)=o(1-d^n)$.  Then NAA2$(\rrr^n)$ holds.
 \end{lemma}

 \begin{proof}The first part  is almost identical to the first part of the proof of Lemma~\ref{NAA2_reverse_finiteU0}. As there
suppose, to the contrary, that there would exist an AA2$(\rrr^n)$,  that is, there exists a subsequence of markets, still denoted by $n$, such that there exists $H^n\in\mathbb{R}$ such that for $X^n_1=H^n(Y^n_1-1)$ we have that (i) $X^n_1\geq -1$, $\rrr^n$-q.s., and (ii) there exists $\alpha>0$ and $P^n\in\rrr^n$ with $P^n(X^n_1\geq\alpha)\geq1-\ep_n$,  where $\lim_{n\to\infty}\ep_n=0$.      
Define again  $N_1=\{n\geq 1: H^n>0\}$ and $N_2=\{n\geq 1: H^n<0\}$ and, as there, assume  that $N_1$, $N_2$ both are unbounded. 

Let $n\in N_1$. As in the proof of Lemma~\ref{NAA2_reverse_finiteU0} we get
$$
 0<H^n\leq\frac{1}{1-d^n}.$$ 

We will distinguish the same two  cases as before: (1) $d<D\leq1$ and (2) $d=D=1$.

Case (1): proceed as for  Case (1) in the proof of Lemma~\ref{NAA2_reverse_finiteU0} using that now $U=+\infty>1$ and, by assumption, $\limsup\Pi^n<1$. Exactly as there we see that (ii) cannot happen on the down parts. For the up parts with $U=+\infty$ we only have to proceed as for the part with $U>1$ of the  proof of Lemma~\ref{NAA2_reverse_finiteU0} and see exactly as there  that (ii) cannot hold on the up parts as well. 

Case (2): As here, by assumption, $\limsup\Pi^n<1$, the proof works exactly as in the proof of  Lemma~\ref{NAA2_reverse_finiteU0}.
Hence there cannot exist $P^n\in\rrr^n$ as in (ii) for large $n\in N_1$.

Let now $n\in N_2$. Here the proof becomes slightly easier for $U=+\infty$ because of the role of the uniform bound of -1 by going short in $S^n$. 
As $H^n<0$ the smallest possible value of $X^n_1$ appears for $R^n\in\rrr^n$ with $u_{R^n}=U^n$. Then $R^n(X^n_1=-|H^n|(U^n-1))>0$. Hence it has to hold that $-|H^n|(U^n-1)\geq-1$. As $U=+\infty$, for each $\ep>0$ there exists $n(\ep)$ such that, for all $n\geq n(\ep)$, $\frac{1}{U^n-1}<\ep$. Therefore 
$$
 0<|H^n|<\ep,\quad\text{for all $n\geq n(\ep)$}. 
$$
Observe that, for each $R\in\rrr^n$, we have that $R(Y^n_1\in\{u_{R},d_{R}\})=1$, where $\min(u_R,d_R)\geq \min(u^n,d^n)=d^n$, by \ref{ass34}. We have that
$$X^n_1=-|H^n|(Y^n_1-1)\leq \max(0, |H^n|(1-Y^n_1)).$$ Therefore, for all $n\geq n(\ep)$, and  all $R\in\rrr^n$, we have that
$$R(X^n_1\leq \ep)=1,$$
as, for each fixed $R$, $\max(0, |H^n|(1-Y^n_1))\leq \max(0, \ep(1-d^n))<\ep$. 
Hence there cannot exist $P^n\in\rrr^n$ as in (ii) for all large $n\in N_2$.
This shows that NAA2$(\rrr^n)$ has to hold. 
 \end{proof}

\begin{proof}[Proof of Theorem~\ref{thm_U0_infinite_AA2}]
    (i) $\Rightarrow$ (ii) follows by using Lemma~\ref{NAA2_D_0<=1}, Lemma~\ref{AA2liminf_prob4} and  Lemma~\ref{AA2liminf_prob3}. Note that these Lemmas do not assume $U<+\infty$.
    
    (ii) $\Rightarrow$ (i) follows by Lemma~\ref{NAA2_reverse_infiniteU0} together with Lemma~\ref{AA1_all_sets}.
    
    (i) $\Leftrightarrow$ (iii)   then follows by Theorem~\ref{lfmftap2}. This shows that (i), (ii), (iii) are equivalent.
\end{proof}

\section*{Acknowledgements}
We would like to thank Felix-Benedikt Liebrich for pointing out a gap in an earlier version of this paper, and introducing us to his work on robust $L^{1}$-$L^{\infty}$ duality theory. 
We would also like to thank Laurence Carassus for drawing our attention to some technical imprecisions in an earlier version of this paper and  to her work on robust binomial models.

\bibliographystyle{chicago}
\bibliography{main}
\end{document}